\newtheoremstyle{new_plain}
	{}
	{}
	{\itshape}
	{}
	{\sffamily\bfseries}
	{.}
	{5pt plus1pt minus1pt\relax}
	{\thmnumber{#2. }\thmname{#1}\thmnote{ #3}}
\newtheoremstyle{nev_circle_definition}
	{}
	{}
	{\normalfont}
	{}
	{\sffamily\bfseries}
	{.}
	{5pt plus1pt minus1pt\relax}
	{\thmnumber{#2. }\thmname{#1}\thmnote{ #3}}
\theoremstyle{plain}
	\newtheorem{theorem}{Theorem}
	\newtheorem{lemma}[theorem]{Lemma}
	\newtheorem{proposition}[theorem]{Proposition}
\theoremstyle{definition}
\theoremstyle{nev_circle_definition}
\numberwithin{theorem}{section}
\numberwithin{equation}{section}
\title{\bfseries \Large Universality limits for generalized Jacobi measures}
\author{\Large Tivadar Danka\thanks{This research was supported by ERC Advanced Grant No. 267055} \\[0.3cm] \large Bolyai Institute, University of Szeged \\ Aradi V. tere 1, H-6720 Szeged, Hungary \\ email: tdanka@math.u-szeged.hu}
\begin{document}

\maketitle

\begin{abstract}
In this paper universality limits are studied in connection with measures which exhibit power-type singular behavior somewhere in their support. We extend the results of Lubinsky for Jacobi measures supported on $ [-1,1] $ to generalized Jacobi measures supported on a compact subset of the real line, where the singularity can be located in the interior or at an endpoint of the support. The analysis is based upon the Riemann-Hilbert method, Christoffel functions, the polynomial inverse image method of Totik and the normal family approach of Lubinsky.
\end{abstract}

\textbf{Keywords:} universality, orthogonal polynomials, Christoffel-Darboux kernel, generalized Jacobi measure, Bessel function, Bessel kernel, Riemann-Hilbert method, Christoffel functions, polynomial inverse images, entire functions \\
\indent \textbf{MSC:} 42C05, 31A99, 33C10, 33C45

\section{Introduction}

Universality limits for random matrices is an intensively studied topic of mathematics and mathematical physics, which has several applications even outside physics and mathematics. For ensembles of $ n \times n $ Hermitian random matrices invariant under unitary conjugation, a connection with orthogonal polynomials can be established. If the eigenvalue distribution is given by 
\[
p(x_1, \dots, x_n) = \frac{1}{Z_n} \prod_{1 \leq i < j \leq n} |x_j - x_i|^2 \prod_{k=1}^{N} w(x_k) dx_k,
\]
then the $ k $-point correlation functions defined by
\[
	R_{k,n}(x_1, \dots, x_k) = \frac{n!}{(n-k)!} \int \dots \int p(x_1, \dots, x_n) dx_{k + 1} \dots dx_n
\]
can be expressed as
\begin{equation}\label{correlation_function_CD_kernel}
	R_{k,n}(x_1, \dots, x_k) = \det\Big(\sqrt{w(x_i)w(x_j)}K_{n}(x_i,x_j)\Big)_{i,j=1}^{n},
\end{equation}
where, if $ p_k(\mu,x) = p_k(x) $ denotes the orthonormal polynomial of degree $ k $ with respect to the measure $ d\mu(x) = w(x)dx$, the function $ K_n(x,y) $ is the so-called Christoffel-Darboux kernel for $ \mu $ defined as
\begin{equation}\label{Christoffel-Darboux_kernel}
	K_n(x,y) = \sum_{k=0}^{n-1} p_k(x) p_k(y).
\end{equation}
This was originally shown for Gaussian ensembles by Mehta and Gaudin in \cite{Mehta-Gaudin}, but later this technique was developed for more general ensembles, see in particular \cite[(4.89)]{Deift-Gioev} or for example \cite{Deift} \cite{Deift_1} \cite{Deift_2} \cite{Pastur-Shcherbina}. $ K_n(x,y) $ can be expressed in terms of $ p_{n} $ and $ p_{n-1} $ as
\begin{equation}\label{Christoffel-Darboux}
	K_n(x,y) = \frac{\gamma_{n-1}}{\gamma_{n}} \frac{p_{n}(x) p_{n-1}(y) - p_{n-1}(x)p_{n}(y)}{x - y},
\end{equation}
where $ \gamma_n(\mu) = \gamma_n $ denotes the leading coefficient of $ p_n(x) $. This is called the Christoffel-Darboux formula.

Because of (\ref{correlation_function_CD_kernel}), limits of the type
\begin{equation}\label{universality_limit}
	\lim_{n \to \infty} \frac{K_n\big(x_0 + \frac{a}{n}, x_0 + \frac{b}{n} \big)}{K_n(x_0, x_0)}, \quad a,b \in \mathbb{R},
\end{equation}
which are called universality limits, are playing an especially important role in the study of eigenvalue distributions for random matrices. For measures supported on $ [-1,1] $, a new approach for universality limits was developed by D. S. Lubinsky in the seminal papers \cite{Lubinsky_1} \cite{Lubinsky_2} \cite{Lubinsky_3}. In \cite{Lubinsky_1} it was shown that if $ \mu $ is a finite Borel measure supported on $ [-1,1] $ which is regular in the sense of Stahl-Totik (see (\ref{reg}) below) and absolutely continuous with $ d\mu(x) = w(x)dx $ in a neighbourhood of $ x_0 \in (-1,1) $, where $ w(x)$ is also continuous and strictly positive,
\begin{equation}\label{Lubinsky_bulk}
	\lim_{n \to \infty} \frac{\widetilde{K}_n \Big( x_0 + \frac{a}{\widetilde{K}_n(x_0, x_0)}, x_0 + \frac{b}{\widetilde{K}_n(x_0, x_0)} \Big)}{\widetilde{K}_n(x_0, x_0)} = \frac{\sin \pi (b - a)}{\pi (b - a)}
\end{equation}
holds, where $ \widetilde{K}_n(x,y) = \sqrt{w(x)w(y)}K_n(x,y) $ denotes the so-called normalized Christoffel-Darboux kernel. In the previous results analiticity of the weight function was required on the whole support $ [-1,1] $, therefore this was a large step ahead.

An important part of Lubinsky's method is that if one is able to deduce limits of the type (\ref{universality_limit}) with $ a = b $, then this can be used to obtain (\ref{universality_limit}) in general. The analysis was largely based upon Christoffel functions, whose behavior is well known for a relatively large class of measures. For a finite Borel measure $ \mu $ the $ n $-th Christoffel function is defined as
\begin{equation}\label{Christoffel_definition}
	\lambda_n(\mu,x_0) = \inf_{\deg (P_n) < n} \int \frac{|P_n(x)|^2}{|P_n(x_0)|^2} d\mu(x),
\end{equation}
where the infimum is taken for all polynomials of degree at most $ n-1 $ with $ P_n(x_0) \neq 0 $. (In other words, it is the $(-1/2)$-th power of the norm of the evaluation functional at $ x_0 $ defined on $ \mathcal{P}_{n-1} \cap L^2(\mu) $, where $ \mathcal{P}_n $ denotes the subspace of polynomials of degree at most $ n $.) The Christoffel functions can be expressed in terms of the Christoffel-Darboux kernel as
\begin{equation}\label{Christoffel_polynomial_form}
	\lambda_n(\mu, x_0) = \frac{1}{K_{n}(x_0,x_0)},
\end{equation}
hence
\[
	\frac{K_n\big(x_0 + \frac{a}{n}, x_0 + \frac{a}{n} \big)}{K_n(x_0, x_0)} = \frac{\lambda_n(\mu, x_0)}{\lambda_n \big(\mu, x_0 + \frac{a}{n} \big)},
\]
holds, therefore this way universality limits can be translated to Christoffel functions. This has proven to be very useful, because Christoffel functions exhibit strong localization properties. Asymptotics for Christoffel functions has been studied since the beginning of the 20th century. The earliest results can be originated from Szeg\H{o}, who studied measures supported on the unit circle. For the classical results  and a detailed account see \cite{Freud} and \cite{Nevai_1}. Szeg\H{o}'s early result was extended by A. M\'at\'e, P. Nevai and V. Totik in the seminal paper \cite{Mate-Nevai-Totik}, which considered measures supported on the unit circle and on the interval $ [-1,1] $. For measures supported on arbitrary compact subsets of the real line asymptotics was established by V. Totik in \cite{Totik_1} using the polynomial inverse image method, which was developed by him in \cite{Totik_2}. (The roots of this method can be originated from the paper \cite{Geronimo-vanAssche}. For more details on this method, see the aforementioned article or the survey paper \cite{Totik_4}.) Lubinsky's result was simultaneously extended for measures supported on arbitrary compact subsets of the real line by B. Simon in \cite{Simon_1} and by V. Totik in \cite{Totik_3}, although they used very different methods. \\

When the measure exhibits singular behavior at the prescribed point $ x_0 $, for example it behaves like $ |x - x_0|^\alpha dx $ for some $ \alpha > -1 $, it no longer shows the same behavior and instead of the sinc kernel, something else appears. Measures of the form
\[
	d\mu(x) = (1 - x)^\alpha (1 + x)^\beta h(x) dx, \quad x \in [-1,1],
\]
where $ h(x) $ is positive and analytic, were studied by A. B. J. Kuijlaars and M. Vanlessen in \cite{Kuijlaars-Vanlessen}. Using Riemann-Hilbert methods, they showed that
\begin{equation}\label{universality_limit_jacobi_edge}
	\lim_{n \to \infty} \frac{1}{2n^2} \widetilde{K}_n\Big( 1 - \frac{a}{2n^2}, 1 - \frac{b}{2n^2} \Big) = \mathbb{J}_{\alpha}(a,b)
\end{equation}
uniformly for $ a, b $ in compact subsets of $ (0, \infty) $, where $ \mathbb{J}_\alpha(a,b) $ is the so-called Bessel kernel defined as
\begin{equation}\label{bessel_kernel}
	\mathbb{J}_\alpha(a,b) = \frac{J_\alpha(\sqrt{a}) \sqrt{b} J_\alpha^{\prime}(\sqrt{b}) - J_\alpha(\sqrt{b}) \sqrt{a} J_\alpha^{\prime}(\sqrt{a})}{2(a - b)}
\end{equation}
and $ J_\alpha(x) $ denotes the Bessel function of the first kind and order $ \alpha $. (Actually, they showed a much stronger result, from which (\ref{universality_limit_jacobi_edge}) follows.) This was extended by Lubinsky in \cite{Lubinsky_2}. He proved that if $ \mu $ is a finite Borel measure supported on $ [-1,1] $ which is absolutely continuous on $ [1-\varepsilon, 1] $ for some $ \varepsilon > 0 $ with
\[
	d\mu(x) = w(x) |x-1|^\alpha, \quad x \in [1-\varepsilon, 1]
\]
there, where $ w(x) $ is strictly positive and continuous at $ 1 $, then
\[
	\frac{1}{2n^{2\alpha + 2}} K_n\Big( 1 - \frac{a}{2n^2}, 1 - \frac{b}{2n^2} \Big) = \mathbb{J}_\alpha^{*}(a,b)
\]
holds, where $ \mathbb{J}_\alpha^*(a,b) = \frac{\mathbb{J}_\alpha(a,b)} {a^{\alpha/2} b^{\alpha/2}} $. It was also shown by Lubinsky in \cite{Lubinsky_3} that if $ K $ is a compact subset of the real line and $ x_0 \in K $ is a right endpoint of $ K $ (i.e. there exists an $ \varepsilon > 0 $ such that $ K \cap (x_0, x_0 + \varepsilon) = \varnothing $), then if $ \mu $ is a finite Borel measure with  $ \operatorname{supp}(\mu) = K $ which is absolutely continuous in a small left neighbourhood of $ x_0 $ with
\[
	d\mu(x) = |x - x_0|^\alpha dx
\]
there for some $ \alpha > -1 $, then
\begin{equation}\label{Lubinsky_diagonal_limit}
	\lim_{n \to \infty} \frac{K_n(x_0 - a \eta_n, x_0 - a \eta_n)}{K_n(x_0,x_0)} = \frac{\mathbb{J}_\alpha^{*}(a, a)}{\mathbb{J}_\alpha^{*}(0, 0)}
\end{equation}
for all $ a \in [0, \infty) $  implies
\begin{equation}\label{Lubinsky_hard_edge_universality_limit}
	\lim_{n \to \infty} \frac{K_n(x_0 - a \eta_n, x_0 - b \eta_n)}{K_n(x_0,x_0)} = \frac{\mathbb{J}_\alpha^{*}(a, b)}{\mathbb{J}_\alpha^{*}(0, 0)}
\end{equation}
uniformly for $ a, b $ on compact subsets of the complex plane, where the sequence $ \eta_n $ is $ \eta_n = (\mathbb{J}_\alpha^*(0,0)/K_n(x_0, x_0))^{1/(\alpha + 1)} $. In such a general setting, it was not known if (\ref{Lubinsky_diagonal_limit}) holds. 

The aim of this paper is twofold. On one hand, we will show that (\ref{Lubinsky_diagonal_limit}) does indeed hold, hence (\ref{Lubinsky_hard_edge_universality_limit}) also holds as well. On the other hand, we also aim to establish universality limits in the case when the singularity is located in the interior of the support rather than on the hard edge. \\

To study universality limits for measures supported on general compact sets, we shall need a few concepts from logarithmic potential theory, most importantly the concept of equilibrium measures. For a detailed account on logarithmic potential theory, see the books \cite{Ransford} and \cite{Saff-Totik}. If $ \mu $ is a finite Borel measure supported on the complex plane, its \emph{energy} is defined as
\[
	I(\mu) = \int \int \log \frac{1}{|z - w|} d\mu(z) d\mu(w).
\]
We can define the energy of a \emph{set} $ K \subseteq \mathbb{C} $ as the infimum of energies for probability measures supported inside $ K $, i.e.
\[
	I(K) = \inf_{\mu \in \mathcal{M}_1(K)} I(\mu),
\]
where $ \mathcal{M}_1(K) $ denotes the set of Borel probability measures with support lying in $ K $. (The quantity $ I(K) $ is also known as \emph{Robin's constant}.) The logarithmic capacity of $ K $ is defined as
\[
	\operatorname{cap}(K) = e^{-I(K)}.
\]
Sets of zero logarithmic capacity are called \emph{polar sets}. They are playing the role of negligible sets in logarithmic potential theory. If $ K $ is a compact subset of the complex plane with nonzero logarithmic capacity, then there is a unique measure denoted by $ \nu_K $ such that $ I(\nu_K) = I(K) $. The measure $ \nu_K $ is called the \emph{equilibrium measure} for $ K $, and its Radon-Nikodym derivative, if it exists, is denoted by $ \omega_K(x) $. For example, we have
\begin{equation}\label{equilibrium_density_unit_interval}
	\omega_{[-1,1]}(x) = \frac{1}{\pi \sqrt{1 - x^2}},
\end{equation}
which is the arcsine distribution.

For a domain $ D \subseteq \mathbb{C}_\infty $ which contains a neighbourhood of $ \infty $, the \emph{Green's function} with pole at infinity is the unique function $ g_D(\cdot, \infty): D \to [-\infty, \infty) $ for which \\
(a) $ g_D(z, \infty) $ is harmonic on $ D \subseteq \mathbb{C}_\infty $ and bounded outside the neighbourhoods of $ \infty $, \\
(b) $ g_D(z, \infty) = \log |z| + O(1) $ as $ z \to \infty $, \\
(c) $ g_D(z, \infty) \to 0 $ as $ z \to \xi \in \partial D \setminus H $, where $ H $ is a set of zero logarithmic capacity.

A compact set $ K $, if $ \Omega $ denotes the unbounded component of its complement, is said to be \emph{regular with respect to the Dirichlet problem}, if $ g_\Omega(z, \infty) \to 0 $ as $ z \to \xi $ for all $ \xi \in \partial \Omega $, i.e. the exceptional set $ H $ in property (c) is empty. \\

Along with local conditions imposed on the measure, (for example the Szeg\H{o} condition like in \cite{Mate-Nevai-Totik}, continuity of weight function like in \cite{Lubinsky_1}, or singular behavior of type $ |x-x_0|^\alpha dx $ as in our case) some kind of global condition is needed. The so-called Stahl-Totik regularity is such a property. A measure $ \mu $ is said to be regular in the sense of Stahl-Totik (or $ \mu \in \mathbf{Reg} $ in short), if for every sequence of nonzero polynomials $ \{ P_n \}_{n=0}^{\infty} $,
\begin{equation}\label{reg}
	\limsup_{n \to \infty} \bigg( \frac{|P_n(z)|}{\| P_n \|_{L^2(\mu)}} \bigg)^{1/\deg(P_n)} \leq 1
\end{equation}
holds for all $ z \in \operatorname{supp}(\mu) \setminus H $, where $ H $ is a set of zero logarithmic capacity. If $ \mathbb{C} \setminus \operatorname{supp}(\mu) $ is regular with respect to the Dirichlet problem, Stahl-Totik regularity is equivalent with the uniform estimate
\begin{equation}\label{reg_dirichlet}
	\limsup_{n \to \infty} \bigg( \frac{\| P_n \|_{\operatorname{supp}(\mu)}}{\| P_n \|_{L^2(\mu)}} \bigg)^{1/\deg(P_n)} \leq 1.
\end{equation}
There are several criteria for regularity, most notably the Erd\H{o}s-Tur\'an criterion. In a special case, it says that if $ \mu $ is a measure supported on the interval $ [-1,1] $ and it is absolutely continuous with $ d\mu(x) = w(x)dx $, then $ "w(x) > 0 \textit{ almost everywhere on } [-1,1] " $ implies that $ \mu $ is regular in the sense of Stahl-Totik. For a detailed account on the $ \mathbf{Reg} $ class for measures, see \cite{Stahl-Totik}. \\

In order to express universality limits for measures exhibiting power-type singularity in the interior of its support, we define the kernel function for $ a, b \in \mathbb{R} $ by
\begin{equation}\label{kernel_function}
	\mathbb{L}_\alpha(a,b) =
	\begin{cases}
		\frac{\sqrt{ab}}{2(a - b)} \Big( J_{\frac{\alpha + 1}{2}}(a) J_{\frac{\alpha - 1}{2}}(b) - J_{\frac{\alpha + 1}{2}}(b) J_{\frac{\alpha - 1}{2}}(a) \Big) & \text{if } a, b \geq 0,\\
		\frac{\sqrt{a(-b)}}{2(a - b)} \Big( J_{\frac{\alpha + 1}{2}}(a) J_{\frac{\alpha - 1}{2}}(-b) + J_{\frac{\alpha + 1}{2}}(-b) J_{\frac{\alpha - 1}{2}}(a) \Big) & \text{if } a \geq 0, b < 0,\\
		\mathbb{L}_\alpha(-a,-b) & \text{else},
	\end{cases}
\end{equation}
where $ J_\nu(x) $ denotes the Bessel functions of the first kind and order $ \nu $. Note that
\[
	\mathbb{L}_\alpha(a,a) = \frac{|a|}{2} \Big( J_{\frac{\alpha+1}{2}}^{\prime}(|a|) J_{\frac{\alpha-1}{2}}(|a|) - J_{\frac{\alpha+1}{2}}(|a|) J_{\frac{\alpha-1}{2}}^{\prime}(|a|) \Big).
\]
Since $ J_\nu(z) = z^\nu G(z) $ where $ G(z) $ is an entire function, we can define the entire version of the kernel function for arbitrary complex arguments as
\begin{equation}\label{kernel_star_function}
	\mathbb{L}_{\alpha}^{*}(a,b) = \frac{\mathbb{L}_\alpha(a,b)}{a^{\alpha/2} b^{\alpha/2}}, \quad \mathbb{L}_{\alpha}^{*}(a) = \frac{\mathbb{L}_\alpha(a,a)}{a^{\alpha}}, \quad a, b \in \mathbb{C}.
\end{equation}
 We emphasize that $ \mathbb{L}_\alpha(a,b) $ is defined with different formulas for the cases $ ab \geq 0 $ and $ ab < 0 $, and without the normalization in the definition (\ref{kernel_star_function}), this would cause problems. This way although, using that $ J_\nu(-z) = (-1)^\nu J_\nu(z) $, we see that in fact the two formulas in (\ref{kernel_function}) coincide after normalization. \\

Our aim is to prove the following four theorems. The first two deals with the asymptotics of Christoffel functions when the power type singularity is in the interior (in other words, in the bulk) or at an endpoint (in other words, at the hard edge). The last two theorems are concerned with universality limits in the same cases.
\begin{theorem}\label{main_theorem_bulk}
Let $ \mu $ be a finite Borel measure which is regular in the sense of Stahl-Totik and suppose that $ \mu $ is supported on a compact set $ K = \operatorname{supp}(\mu) $ on the real line. Let $ x_0 \in \operatorname{int}(K) $ be a point from the interior of its support and suppose that on some interval $ (x_0 - \varepsilon_0, x_0 + \varepsilon_0) $ containing $ x_0 $, the measure $ \mu $ is absolutely continuous with
\[
	d\mu(x) = w(x) |x - x_0|^\alpha dx, \quad x \in (x_0 - \varepsilon_0, x_0 + \varepsilon_0)
\]
there for some $ \alpha > -1 $ and $ \alpha \neq 0 $, where $ w $ is strictly positive and continuous at $ x_0 $. Then
\begin{equation}\label{general_bulk}
	\lim_{n \to \infty} n^{\alpha + 1} \lambda_n \Big(\mu, x_0 + \frac{a}{n} \Big) = \frac{w(x_0)}{(\pi \omega_K(x_0))^{\alpha + 1}} \Big( \mathbb{L}_{\alpha}^{*}\big(\pi \omega_K(x_0) a \big) \Big)^{-1}
\end{equation}
holds uniformly for $ a $ in compact subsets of the real line, where $ \mathbb{L}_\alpha^*(\cdot) $ is defined by (\ref{kernel_star_function}).
\end{theorem}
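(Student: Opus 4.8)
The plan is to establish the statement in three stages: a model computation on $[-1,1]$ via the Riemann--Hilbert method, a transfer to arbitrary compact supports through Totik's polynomial inverse image technique, and a comparison argument that upgrades a locally constant coefficient to the general weight $w$. Uniformity in $a$ on compact sets is propagated at each stage.

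\emph{Model case.} First I would analyze a generalized Jacobi weight on $[-1,1]$ of the form $v(x)\,|x-c|^\alpha (1-x)^{\gamma_1}(1+x)^{\gamma_2}$, where $\gamma_1,\gamma_2>-1$, $c\in(-1,1)$, and $v$ is positive and analytic. The Riemann--Hilbert analysis of Kuijlaars and Vanlessen \cite{Kuijlaars-Vanlessen} handles the endpoint singularities with Bessel parametrices; at the interior point $c$ one inserts a local parametrix built from Bessel functions of orders $\tfrac{\alpha-1}{2}$ and $\tfrac{\alpha+1}{2}$, both of which exceed $-1$ because $\alpha>-1$. Substituting the resulting asymptotics into the Christoffel--Darboux formula (\ref{Christoffel-Darboux}) and simplifying by a Bessel-function identity produces exactly the kernel $\mathbb{L}_\alpha$ of (\ref{kernel_function}); as a consistency check, in the symmetric case $c=0$, $v\equiv 1$, $\gamma_1=\gamma_2=0$ the substitution $x\mapsto x^2$ splits the problem into two hard-edge problems on $[0,1]$ with weights $t^{(\alpha-1)/2}$ and $t^{(\alpha+1)/2}$, whose Bessel-kernel asymptotics from \cite{Kuijlaars-Vanlessen} and \cite{Lubinsky_2} (compare (\ref{universality_limit_jacobi_edge}) and (\ref{Lubinsky_diagonal_limit})) recombine into $\mathbb{L}_\alpha$ via the same identity. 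Together with (\ref{Christoffel_polynomial_form}) this yields (\ref{general_bulk}) for such model measures, with $\omega_{[-1,1]}$ and $v(c)$ in the roles of $\omega_K$ and $w(x_0)$.

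\emph{Transfer to general $K$.} Next, by Totik's polynomial inverse image method \cite{Totik_2} \cite{Totik_1} \cite{Totik_3}, I would approximate $K$ from inside and from outside by sets $E=T^{-1}([-1,1])$ with $T$ a polynomial for which $x_0$ is not a critical point and $T(x_0)\in(-1,1)$, arranged so that $\omega_E(x_0)\to\omega_K(x_0)$ along the approximating sequences. On such an $E$ the pullback of a model weight has the singularity $|T(x)-T(x_0)|^\alpha\sim|T'(x_0)|^\alpha\,|x-x_0|^\alpha$ at $x_0$, the orthogonal-polynomial asymptotics pull back from $[-1,1]$, and the scalar $|T'(x_0)|^\alpha$ together with the identity $\pi\omega_E(x_0)=|T'(x_0)|\,\pi\omega_{[-1,1]}(T(x_0))/\deg T$ reproduce precisely the exponent $\alpha+1$ of $\pi\omega_E(x_0)$ appearing in (\ref{general_bulk}). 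Using monotonicity of $\lambda_n$ in the measure together with the localization principle for Christoffel functions of regular measures (for two regular measures with common support, $\lambda_n(\mu_1,x_0+a/n)/\lambda_n(\mu_2,x_0+a/n)\to 1$ whenever $\mu_1=\mu_2$ near $x_0$, so the behaviour of the measure away from $x_0$ is immaterial), and letting the approximating sets converge to $K$, I obtain (\ref{general_bulk}) for every regular measure on $K$ whose weight equals $c\,|x-x_0|^\alpha$ on some neighbourhood of $x_0$.

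\emph{General weight, and the main obstacle.} Finally, for $\mu$ as in the statement, fix $\varepsilon>0$ and a small $\delta$ with $(x_0-\delta,x_0+\delta)\subseteq\operatorname{int}(K)$ and $w(x_0)(1-\varepsilon)\le w(x)\le w(x_0)(1+\varepsilon)$ there, and let $\nu_\varepsilon^{\pm}$ coincide with $\mu$ off $(x_0-\delta,x_0+\delta)$ and equal $w(x_0)(1\pm\varepsilon)\,|x-x_0|^\alpha\,dx$ inside. Then $\nu_\varepsilon^{-}\le\mu\le\nu_\varepsilon^{+}$, and both $\nu_\varepsilon^{\pm}$ are regular, being mutually absolutely continuous with $\mu$ with densities bounded above and below; so the previous stage applies to $\nu_\varepsilon^{\pm}$, and monotonicity of $\lambda_n$ traps $n^{\alpha+1}\lambda_n(\mu,x_0+a/n)$ between the corresponding quantities for $\nu_\varepsilon^{\mp}$. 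Letting $\varepsilon\to 0$ gives (\ref{general_bulk}). I expect the real difficulty to be concentrated in the model stage: carrying out the interior Bessel parametrix and verifying that the Christoffel--Darboux combination truly collapses to $\mathbb{L}_\alpha$ with full uniformity in $a$ and for both parities of the degree, and then threading the exact factor $\pi\omega_K(x_0)$, rather than merely a comparable quantity, through the polynomial inverse image construction. The comparison and localization steps are, by contrast, fairly routine once the asymptotics for locally clean weights are in hand.
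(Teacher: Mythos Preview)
Your three-stage plan matches the paper's, and the model computation via Riemann--Hilbert as well as the final sandwich in $w$ are essentially what the paper does. The gap is in the middle stage.

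You propose to approximate $K$ from inside and from outside by inverse images $E^{\pm}=T^{-1}([-1,1])$ and then invoke ``monotonicity of $\lambda_n$ in the measure together with the localization principle'' to pass from pullback measures on $E^{\pm}$ to a measure on $K$. But the pullback measures have support $E^{\pm}\neq K$, so your localization principle (which you state for measures of \emph{common} support) does not apply, and monotonicity in the measure does not compare $\lambda_n$ for measures on different supports either: there is no ordering between $\mu$ on $K$ and a pullback measure on $E^{+}\supsetneq K$ or on $E^{-}\subsetneq K$. In particular your sketch does not produce the \emph{lower} bound for $\lambda_n(\mu,x_0+a/n)$, and ``the orthogonal-polynomial asymptotics pull back from $[-1,1]$'' is only literally true along the subsequence of degrees divisible by $\deg T$.

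The paper works only with outer approximations $K\subseteq E_N=T_N^{-1}([-1,1])$, $T_N(x_0)=0$, and treats the two directions asymmetrically. The upper bound is routine: compose the extremal polynomial for $|x|^\alpha dx$ with $T_N$ and damp it by a fast-decreasing factor. The lower bound is the substantive step: take the extremal polynomial $P_n$ for $\lambda_n(\mu,x_0+a/n)$, damp it to $R_n$, and \emph{symmetrize over the fibre of $T_N$},
\[
R_n^*(y)=\sum_{T_N(y_k)=T_N(y)}R_n(y_k),
\]
so that $R_n^*=V_n\circ T_N$ with $\deg V_n\le (1+\eta)n/N$; testing $V_n$ against $|x|^\alpha dx$ on $[-1,1]$ via $x=T_N(y)$ yields the lower estimate. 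Controlling $R_n$ on $E_N\setminus K$ uses Stahl--Totik regularity of $\mu$ together with the Bernstein--Walsh lemma (this is where the Hausdorff closeness of $E_N$ to $K$ is spent), and when $K$ is not regular for the Dirichlet problem one first passes, via Ancona's theorem, to a Dirichlet-regular subset of nearly full capacity on which the sup/$L^2$ comparison (\ref{reg_dirichlet}) is available. None of these devices --- the fibre symmetrization, Bernstein--Walsh, Ancona --- appears in your sketch, and without the first I do not see how to close the lower estimate.
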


The analogue for the edge is the following.

\begin{theorem}\label{main_theorem_edge}
Let $ \mu $ be a finite Borel measure which is regular in the sense of Stahl-Totik and suppose that $ \mu $ is supported on a compact set $ K = \operatorname{supp}(\mu) $ on the real line. Let $ x_0 \in K $ be a right endpoint of $ K $ (i.e. $ K \cap (x_0, x_0 + \varepsilon_1) = \varnothing $ for some $ \varepsilon_1 > 0 $) and assume that on some interval  $ (x_0 - \varepsilon_0, x_0] $ the measure $ \mu $ is absolutely continuous with
\[
	d\mu(x) = w(x)|x - x_0|^\alpha dx, \quad x \in (x_0 - \varepsilon_0, x_0] \\
\]
there for some $ \alpha > -1 $, where $ w $ is strictly positive and left continuous at $ x_0 $. Then
\begin{equation}\label{general_edge}
	\lim_{n \to \infty} n^{2\alpha + 2} \lambda_n \Big( \mu, x_0 - \frac{a}{2n^2} \Big) = \frac{w(x_0)}{M(K, x_0)^{2\alpha + 2}} \Big( 2^{\alpha + 1} \mathbb{J}_\alpha^*\big(M(K, x_0)^2 a \big) \Big)^{-1}
\end{equation}
holds uniformly for $ a $ in compact subsets of $ [0, \infty) $, where $ \mathbb{J}_\alpha^*(\cdot) $ is the Bessel kernel defined by (\ref{bessel_kernel}) and $ M(K, x_0) $ is defined by
\begin{equation}\label{M_equilibrium}
	M(K, x_0) = \lim_{x \to x_0 -} \sqrt{2} \pi |x - x_0|^{1/2} \omega_K(x).
\end{equation}
\end{theorem}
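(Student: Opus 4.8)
The plan is to reduce the general case to the model Jacobi case via Totik's polynomial inverse image method, exactly as the bulk case is handled, but adapted to the edge. First I would set up the comparison measure. Since $x_0$ is a right endpoint of $K$ and $K$ is a compact subset of $\mathbb{R}$, one uses the polynomial inverse image technique to find, for each $\delta>0$, a polynomial $T=T_N$ of degree $N$ such that $T^{-1}([-1,1])$ is a finite union of intervals containing $K$ (up to a set of small capacity), with $T(x_0)=1$ and $x_0$ a simple endpoint mapped to the hard edge $1$ of $[-1,1]$; the key quantitative fact is that $T'(x_0)\ne 0$, so $T(x)-1 \sim T'(x_0)(x-x_0)$ near $x_0$. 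Then $d\mu$ pulls back/pushes forward to a measure on $[-1,1]$ whose weight near $1$ behaves like $\tilde w(y)|y-1|^\alpha dy$ with $\tilde w$ continuous and positive at $1$, because $|x-x_0|^\alpha$ transforms to $|y-1|^\alpha$ times a smooth nonvanishing factor. The Christoffel function transforms under polynomial inverse images with an explicit scaling (this is Totik's lemma, which I would invoke from \cite{Totik_1,Totik_3}): $\lambda_{Nn}(\mu,x_0)$ is comparable to $\lambda_n$ of the pushed-forward measure at $1$, up to a factor $T'(x_0)$ coming from the Jacobian.

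Second, I would apply Lubinsky's hard-edge result for generalized Jacobi measures on $[-1,1]$, namely the asymptotic $\frac{1}{2n^{2\alpha+2}}K_n(1-\frac a{2n^2},1-\frac b{2n^2}) \to \mathbb{J}_\alpha^*(a,b)$ from \cite{Lubinsky_2} (equivalently its Christoffel-function form $n^{2\alpha+2}\lambda_n(\nu,1-\frac a{2n^2}) \to \frac{\tilde w(1)}{2^{\alpha+1}}(\mathbb{J}_\alpha^*(a))^{-1}$), which needs only Stahl--Totik regularity of $\nu$ and the local power-type behavior at the endpoint. Regularity of $\nu$ follows from regularity of $\mu$ together with the fact that polynomial inverse images preserve $\mathbf{Reg}$. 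Then I would substitute the transformed asymptotic back through the Christoffel-function scaling law. The scaling variable $a/(2n^2)$ at $1$ corresponds, under $T$, to a point $x_0 - \frac{a'}{2(Nn)^2}$ with $a'$ determined by $T'(x_0)$ and $N$; matching the two parametrizations forces the constant $M(K,x_0)$ to appear. Concretely, $M(K,x_0)$ is exactly the local density blow-up rate $\lim_{x\to x_0-}\sqrt2\,\pi|x-x_0|^{1/2}\omega_K(x)$, and under the inverse-image map the equilibrium measure of $K$ pulls back from the arcsine distribution on $[-1,1]$, giving $\omega_K(x) = \frac{1}{N}|T'(x)|\,\omega_{[-1,1]}(T(x))$; near $x_0$ this yields $M(K,x_0)^2 = \frac{|T'(x_0)|}{N}$, which is the bookkeeping identity that makes all the constants in \eqref{general_edge} fit together.

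Third, to get the limit along the \emph{continuous} parameter $n$ (not just $n$ in residue classes mod $N$) and uniformly for $a$ in compact subsets of $[0,\infty)$, I would use the standard monotonicity/comparison trick for Christoffel functions: $\lambda_n$ is monotone in $n$, and $\lambda_{Nn}$ and $\lambda_{Nn+N}$ sandwich $\lambda_m$ for $Nn\le m\le Nn+N$; since the right-hand side of \eqref{general_edge} varies continuously in the scaling, the sandwich closes. One also needs a localization step: the behavior of $\lambda_n(\mu,\cdot)$ near $x_0$ depends, to leading order, only on $\mu$ restricted to $(x_0-\varepsilon_0,x_0]$, which is standard (comparison of Christoffel functions for measures agreeing near $x_0$, plus regularity to control the complement); here one uses that $w$ is left-continuous at $x_0$ to replace $w$ by the constant $w(x_0)$ with negligible error, exactly as in Lubinsky's and Totik's treatments.

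The main obstacle will be the second step's constant-chasing together with the edge subtlety in the inverse-image construction: one must ensure the polynomial $T$ can be chosen so that $x_0$ maps to the hard edge $1$ \emph{as a genuine endpoint} (not an interior point) of $T^{-1}([-1,1])$, that no other branch of $T^{-1}$ collides with $x_0$, and that the excluded polar set does not interfere near $x_0$ — this is where regularity with respect to the Dirichlet problem (or an approximation argument reducing to that case) is used. Once the geometry is set up correctly, verifying that the Jacobian factors, the degree-$N$ rescaling, and the local equilibrium-density rate $M(K,x_0)$ assemble into precisely the stated formula is a careful but routine computation; I expect the bulk of the writing to be in justifying the inverse-image reduction cleanly rather than in any single hard estimate, since the deep analytic input (the Bessel-kernel asymptotics at the edge) is imported wholesale from \cite{Lubinsky_2} and \cite{Kuijlaars-Vanlessen}.
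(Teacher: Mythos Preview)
Your strategy---approximate $K$ by a polynomial inverse image $E_N=T_N^{-1}([-1,1])$ with $T_N(x_0)=1$, transfer the hard-edge asymptotic on $[-1,1]$ through $T_N$, then use monotonicity of $\lambda_n$ in $n$ together with an $o(n^{-2})$ perturbation lemma to pass from the subsequence $\{Nn\}$ to all $n$---is exactly the paper's. Two remarks are in order.

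First, your bookkeeping identity is off by a power of $N$: from $\omega_{E_N}(x)=\dfrac{|T_N'(x)|}{N\pi\sqrt{1-T_N(x)^2}}$ and $1-T_N(x)^2\sim 2|T_N'(x_0)|\,(x_0-x)$ one obtains $M(E_N,x_0)^2=|T_N'(x_0)|/N^2$, not $|T'(x_0)|/N$; this is precisely the paper's formula (\ref{M_as_T_N_derivative}), and it is what makes the scaling $a\mapsto a/(2(Nn)^2)$ match $a'\mapsto a'/(2n^2)$ with $a'=M(E_N,x_0)^2 a$.

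Second, the paper does \emph{not} push $\mu$ forward to $[-1,1]$ and then invoke Lubinsky's general hard-edge theorem on the resulting measure. Instead it compares directly to the pure model $d\mu_\alpha^e=|x-1|^\alpha\,dx$ via the Kuijlaars--Vanlessen asymptotic (\ref{model_case_edge_Christoffel}), handling the continuous factor $w$ and the nonlocal part of $\mu$ by hand through the separation estimates (\ref{edge_upper_separation}) and the fast-decreasing factor $S_{n,x_0-\xi_n,K}$; the upper bound comes from composing the extremal polynomial for $\mu_\alpha^e$ with $T_N$, and the lower bound from symmetrizing the extremal polynomial for $\mu$ over the $N$ branches of $T_N^{-1}$. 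Your push-forward route would also work and is a bit shorter if one is willing to quote \cite{Lubinsky_2} as a black box, but note that $T_N(K)$ need not equal all of $[-1,1]$, so the pushed-forward measure may fail the support hypothesis there---a wrinkle you would need to patch---whereas the paper's direct comparison to $\mu_\alpha^e$ sidesteps this entirely and keeps the analytic input minimal.
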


By symmetry, there is a similar result for left endpoints. Both of these theorems are in agreement with Theorems 1.1 and 1.2 of \cite{Danka-Totik} in the case when $ K $ is a finite union of intervals and $ a = 0 $. From the asymptotics for Christoffel functions we obtain universality limits.

\begin{theorem}\label{main_theorem_universality_bulk}
With the assumptions of Theorem \ref{main_theorem_bulk}, we have
\begin{equation}\label{general_universality_bulk}
	\lim_{n \to \infty} \frac{K_n\big(x_0 + \frac{a}{n}, x_0 + \frac{b}{n} \big)}{K_n(x_0, x_0)} = \frac{\mathbb{L}_\alpha^*(\pi \omega_{K}(x_0) a, \pi \omega_{K}(x_0) b)}{\mathbb{L}_\alpha^*(0,0)}
\end{equation}
uniformly for $ a,b $ in compact subsets of the complex plane.
\end{theorem}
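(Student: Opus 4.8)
The plan is to observe first that the diagonal case $a=b$ of \eqref{general_universality_bulk} is merely a restatement of Theorem \ref{main_theorem_bulk}, and then to pass from the diagonal to the off-diagonal by Lubinsky's normal family method. For the first point, \eqref{Christoffel_polynomial_form} gives
\[
	\frac{K_n\big(x_0+\tfrac{a}{n},x_0+\tfrac{a}{n}\big)}{K_n(x_0,x_0)}=\frac{\lambda_n(\mu,x_0)}{\lambda_n\big(\mu,x_0+\tfrac{a}{n}\big)},
\]
and dividing the two asymptotics supplied by \eqref{general_bulk} — the factors $n^{\alpha+1}$ and $w(x_0)(\pi\omega_K(x_0))^{-\alpha-1}$ cancel — yields $\mathbb{L}_\alpha^{*}(\pi\omega_K(x_0)a)/\mathbb{L}_\alpha^{*}(0,0)$ uniformly for $a$ in compact subsets of $\mathbb{R}$; in particular $K_n(x_0,x_0)\sim (\pi\omega_K(x_0))^{\alpha+1}\mathbb{L}_\alpha^{*}(0,0)\,w(x_0)^{-1}\,n^{\alpha+1}$.

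For the second point, set $\rho=\pi\omega_K(x_0)$ and
\[
	F_n(a,b)=\frac{1}{K_n(x_0,x_0)}\,K_n\Big(x_0+\frac{a}{\rho n},\,x_0+\frac{b}{\rho n}\Big),\qquad(a,b)\in\mathbb{C}^2,
\]
an entire function of $(a,b)$, so that the claim becomes $F_n\to\mathbb{L}_\alpha^{*}(a,b)/\mathbb{L}_\alpha^{*}(0,0)$ locally uniformly. First I would show $\{F_n\}$ is locally bounded on $\mathbb{C}^2$, with the quantitative form $|F_n(a,b)|\le Ce^{c(|a|+|b|)}$ on balls. Cauchy--Schwarz in \eqref{Christoffel-Darboux_kernel} gives $|K_n(u,v)|^2\le K_n(u,\bar u)K_n(v,\bar v)$, and $K_n(u,\bar u)=\sum_{k=0}^{n-1}|p_k(u)|^2=1/\lambda_n(\mu,u)$, so it is enough to bound $\lambda_n(\mu,x_0)/\lambda_n(\mu,x_0+\tfrac{z}{\rho n})$ from above for $z$ in a disc. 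This comes from the extremal characterization \eqref{Christoffel_definition}, testing $\lambda_n(\mu,x_0+\tfrac{z}{\rho n})$ with the near-optimal polynomial for $\lambda_n(\mu,x_0)$ times a fixed bounded-degree factor that moves the evaluation point and dominates the singular weight $w(x)|x-x_0|^\alpha$; equivalently it follows from $\mu\in\mathbf{Reg}$ and the M\'at\'e--Nevai--Totik / Totik asymptotics $n\lambda_n(\mu,x)\to w(x)/(\pi\omega_K(x))$ on a fixed punctured neighbourhood of $x_0$. Montel's theorem then makes $\{F_n\}$ normal; any locally uniform subsequential limit $F$ is entire, of finite exponential type, positive definite, and satisfies $F(a,b)=F(b,a)$, $\overline{F(a,b)}=F(\bar a,\bar b)$ and $F(0,0)=1$.

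It remains to identify $F$. On the real diagonal, the first paragraph gives $F(a,a)=\mathbb{L}_\alpha^{*}(a,a)/\mathbb{L}_\alpha^{*}(0,0)$ for all real $a$. Up to the scalar $K_n(x_0,x_0)$, each $F_n$ is the Christoffel--Darboux kernel of the rescaled measure $\mu_n$ obtained from $\mu$ by the substitution $x\mapsto x_0+x/(\rho n)$, whose local behaviour near $0$ is that of $|t|^\alpha\,dt$ up to a scalar; letting $n\to\infty$, $F$ is the reproducing kernel of a de~Branges space of finite exponential type. I would then invoke Lubinsky's rigidity principle (as in the hard-edge transfer of \cite{Lubinsky_2} and \cite{Lubinsky_3}): such a de~Branges reproducing kernel is determined by its diagonal on $\mathbb{R}$. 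Since $\mathbb{L}_\alpha^{*}(a,b)/\mathbb{L}_\alpha^{*}(0,0)$ is itself such a kernel — positive definite, entire and of finite exponential type, being, after the affine change of variables, the limit of the rescaled kernels of the model weight $|t|^\alpha$ via the Riemann--Hilbert analysis in the spirit of \cite{Kuijlaars-Vanlessen} — and has the same diagonal as $F$, we get $F(a,b)=\mathbb{L}_\alpha^{*}(a,b)/\mathbb{L}_\alpha^{*}(0,0)$. As the limit does not depend on the subsequence, $F_n$ converges to it, which is \eqref{general_universality_bulk}.

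The main obstacle is this identification step. The complex bound on $\lambda_n(\mu,\cdot)$ near $x_0$ is technical but routine once the right test polynomial — one respecting the $|x-x_0|^\alpha$ factor — is chosen; the delicate point is the de~Branges rigidity: verifying that the limit kernel generates a de~Branges space of the right type, that its restriction to the diagonal of $\mathbb{R}$ pins it down, and then matching it against the Bessel-type model. Following Lubinsky, the cleanest way to sidestep an abstract uniqueness theorem is to run the normal family argument simultaneously for $\mu$ and for the pure-singularity measure $|x-x_0|^\alpha\,dx$ on a small interval about $x_0$ — both covered by Theorem \ref{main_theorem_bulk} — and to deduce from the reproducing property together with a single Christoffel extremal inequality that the two limiting kernels, coinciding on the diagonal, coincide everywhere.
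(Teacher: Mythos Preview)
Your outline follows the same broad strategy as the paper --- reduce to the diagonal via Theorem \ref{main_theorem_bulk}, prove normality of the rescaled kernels, then identify every subsequential limit --- but the identification step is where all the content lies, and your sketch does not supply it.

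The claim that ``a de~Branges reproducing kernel is determined by its diagonal on $\mathbb{R}$'' is not a theorem you can cite; it is precisely what has to be proved, and it is false in general without further information (two entire reproducing kernels of different exponential types can share a diagonal on a half-line, and one needs the \emph{type} to pin things down). The paper's actual argument, following \cite{Lubinsky_3}, has three concrete ingredients that are absent from your proposal. First, one must establish a reproducing identity for the model kernel itself, namely $\mathbb{L}_\alpha^*(a,b)=\int_{-\infty}^{\infty}\mathbb{L}_\alpha^*(a,s)\mathbb{L}_\alpha^*(s,b)|s|^\alpha\,ds$; this is not automatic and the paper proves it by a separate limiting argument from the model case (Theorem \ref{reproducing_kernel_identity} and Lemma \ref{reproducing_kernel_lemma}). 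Second, one shows that any subsequential limit $f(a,\cdot)$ lies in the Cartwright class and has a well-defined exponential type $\sigma$ independent of $a$; the reproducing identity applied to $f(a/\sigma,\cdot/\sigma)$ then yields an $L^2$ inequality whose nonnegativity forces $\sigma\ge 1$ and which becomes an equality identifying $f$ with $\mathbb{L}_\alpha^*/\mathbb{L}_\alpha^*(0,0)$ \emph{only if} $\sigma=1$. Third --- and this is the step with real substance --- the matching bound $\sigma\le 1$ comes from counting zeros of $f(0,\cdot)$: one uses the Markov--Stieltjes inequalities for $\mu$ together with the diagonal asymptotics and the large-$a$ behaviour $\mathbb{L}_\alpha^*(a,a)\sim(\pi|a|^\alpha)^{-1}$ to show that the zero-counting function satisfies $n(f(0,\cdot),r)/(2r)\to 1/\pi$, hence $\sigma=1$ via the Cartwright formula.

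Your fallback suggestion --- run the argument simultaneously for $\mu$ and for the pure power weight and compare --- does not avoid this. Lubinsky's original comparison inequality from \cite{Lubinsky_1} requires one measure to dominate the other and gives a one-sided bound; it does not give that two limit kernels with the same diagonal coincide. Without the exponential-type computation (or an equivalent device) the identification remains a gap.
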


\begin{theorem}\label{main_theorem_universality_edge}
With the assumptions of Theorem \ref{main_theorem_edge}, we have
\begin{equation}\label{general_universality_edge}
	\lim_{n \to \infty} \frac{K_n\big( x_0 - \frac{a}{2n^2}, x_0 - \frac{b}{2n^2} \big)}{K_n(x_0, x_0)} = \frac{\mathbb{J}_\alpha^*\big(M(K, x_0)^2 a, M(K, x_0)^2 b\big)}{\mathbb{J}_\alpha^*(0,0)}.
\end{equation}
uniformly for $ a, b $ in compact subsets of the complex plane.
\end{theorem}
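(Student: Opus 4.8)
The plan is to derive the full, complex, off-diagonal statement from the diagonal Christoffel asymptotics of Theorem~\ref{main_theorem_edge} by the normal family method, in the spirit of Lubinsky's reduction (\ref{Lubinsky_diagonal_limit}) $\Rightarrow$ (\ref{Lubinsky_hard_edge_universality_limit}), but carried out directly with the scaling $a/(2n^2)$ rather than the normalization $\eta_n$, so that the weight value $w(x_0)$ is accounted for automatically. Since $\lambda_n(\mu,\cdot)=1/K_n(\cdot,\cdot)$ by (\ref{Christoffel_polynomial_form}), one has $K_n\big(x_0-\tfrac{a}{2n^2},x_0-\tfrac{a}{2n^2}\big)/K_n(x_0,x_0)=\lambda_n(\mu,x_0)/\lambda_n\big(\mu,x_0-\tfrac{a}{2n^2}\big)$; applying (\ref{general_edge}) to numerator and denominator and observing that the factors $w(x_0)$ and $2^{\alpha+1}$ cancel, we obtain, uniformly for $a$ in compact subsets of $[0,\infty)$,
\[
	\lim_{n\to\infty}\frac{K_n\big(x_0-\tfrac{a}{2n^2},\,x_0-\tfrac{a}{2n^2}\big)}{K_n(x_0,x_0)}=\frac{\mathbb{J}_\alpha^*\big(M(K,x_0)^2a\big)}{\mathbb{J}_\alpha^*(0,0)},
\]
which is the diagonal case of (\ref{general_universality_edge}); moreover (\ref{general_edge}) at $a=0$ shows that $K_n(x_0,x_0)$ is of exact order $n^{2\alpha+2}$, so the scaling $a/(2n^2)$ is a constant multiple of $\eta_n$ up to a factor $1+o(1)$.

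Next I would set $F_n(a,b)=K_n\big(x_0-\tfrac{a}{2n^2},\,x_0-\tfrac{b}{2n^2}\big)/K_n(x_0,x_0)$, which is entire in $(a,b)\in\mathbb{C}^2$ because $K_n$ is a polynomial in each variable, and show that $\{F_n\}$ is a normal family on compact subsets of $\mathbb{C}^2$. By the Cauchy--Schwarz inequality for (\ref{Christoffel-Darboux_kernel}) one has $|K_n(u,v)|^2\le K_n(u,\bar u)\,K_n(v,\bar v)$, so it suffices to bound $K_n(z,\bar z)/K_n(x_0,x_0)$ for $z=x_0-a/(2n^2)$ with $a$ in a fixed disc; for real $a\ge 0$ this quantity equals $\lambda_n(\mu,x_0)/\lambda_n(\mu,z)$ and is bounded by (\ref{general_edge}), while the passage to complex $a$ (and to real $a<0$, where $z$ lies just off $K$) follows from growth estimates for the Christoffel function, respectively the orthonormal polynomials, in an $O(n^{-2})$-complex neighbourhood of $x_0$ --- estimates that ultimately rest on the Riemann--Hilbert asymptotics available for the model generalized Jacobi weight on $[-1,1]$, transplanted to $K$ by Totik's polynomial inverse image construction. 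Montel's theorem then gives normality, and every locally uniform subsequential limit $F$ is entire on $\mathbb{C}^2$, satisfies $F(\bar a,\bar b)=\overline{F(a,b)}$, and restricts on the diagonal to $F(a,a)=\mathbb{J}_\alpha^*(M(K,x_0)^2a)/\mathbb{J}_\alpha^*(0,0)$ for $a\ge 0$.

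It remains to identify $F$. Under $x=x_0-t/(2n^2)$ the pullbacks of $\mu$, suitably renormalized (for instance, multiplied by $K_n(x_0,x_0)$), converge weak-$*$ to a positive constant multiple of $t^{\alpha}\,dt$ on $[0,\infty)$, because $d\mu(x)=w(x)|x-x_0|^{\alpha}\,dx$ near $x_0$ with $w$ positive and left continuous at $x_0$. Passing to the limit in the reproducing property of $K_n$ shows that $F$ is the reproducing kernel of the Hilbert space of entire functions of the appropriate type sitting inside $L^2(t^{\alpha}\,dt)$ on the half-line; the normalization $F(0,0)=1$ together with the prescribed diagonal fixes the type and pins $F$ down. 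The reproducing kernel of that space --- the Paley--Wiener type space associated with the Hankel transform of order $\alpha$, which underlies the Bessel-kernel limit (\ref{universality_limit_jacobi_edge}) --- is exactly $\mathbb{J}_\alpha^*(M(K,x_0)^2a,\,M(K,x_0)^2b)/\mathbb{J}_\alpha^*(0,0)$, so every subsequential limit equals this function, hence $F_n$ itself converges to it uniformly on compact subsets of $\mathbb{C}^2$, which is (\ref{general_universality_edge}).

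I expect the main obstacle to be the uniform bound on $K_n(z,\bar z)/K_n(x_0,x_0)$ for complex $z$ within $O(n^{-2})$ of the endpoint $x_0$: this is the step upgrading the purely diagonal, real-variable content of Theorem~\ref{main_theorem_edge} to the normality needed to run the argument over $\mathbb{C}^2$, and it is where the explicit Riemann--Hilbert control of the model case and the polynomial inverse image transfer become indispensable. The second delicate point is the rigidity statement that the limiting reproducing kernel is determined by the limiting measure together with its prescribed diagonal, for which one invokes the de Branges structure of the half-line space exactly as in \cite{Lubinsky_3}; once these are in place, the remaining passages --- weak-$*$ convergence of the rescaled measures, transfer of the reproducing property to the limit, and the subsequence argument --- are routine.
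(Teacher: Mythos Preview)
Your outline is essentially a sketch of how to reprove \cite[Theorem 1.2]{Lubinsky_3} from scratch, adapted to the scaling $a/(2n^2)$. The paper, by contrast, simply \emph{cites} that theorem: once Theorem~\ref{main_theorem_edge} is established, the diagonal limit (\ref{Lubinsky_diagonal_limit}) follows after the change of variable $a\mapsto a/(2M(K,x_0)^2)$ (since, as you correctly note, $\eta_n$ and $1/(2n^2)$ differ by a constant factor up to $1+o(1)$), and Lubinsky's theorem then yields (\ref{Lubinsky_hard_edge_universality_limit}), which is (\ref{general_universality_edge}) in the other scaling. The entire proof in the paper is one sentence.

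Your longer route is not wrong in spirit---it is exactly what the paper carries out in Section~\ref{section_universality} for the \emph{bulk} singularity, where no off-the-shelf Lubinsky theorem is available---but for the edge it is unnecessary duplication. Moreover, the two steps you flag as delicate (the normality bound for complex $z$ within $O(n^{-2})$ of $x_0$, and the de~Branges rigidity argument identifying the limit) are precisely the content of \cite{Lubinsky_3}; your proposal does not supply them but only names them, so as written it is a proof sketch that ultimately rests on the same external input the paper invokes directly. If you want a self-contained argument, you would have to reproduce the edge analogues of Lemmas~\ref{normality_lemma}--\ref{MS_limit} (which Lubinsky already did); if not, just apply his theorem and the rescaling.
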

Again by symmetry, there is a similar result for left endpoints. The proof of Theorems \ref{main_theorem_bulk} and \ref{main_theorem_edge} consists of two steps. First we prove (\ref{general_bulk}) and (\ref{general_edge}) for a special case, namely we study the measures $ \mu_\alpha^b $ and $ \mu_\alpha^e $ defined as
\[
	d\mu_\alpha^b(x) = |x|^\alpha dx, \quad x \in [-1,1]
\]
and
\[
	d\mu_\alpha^e(x) = |x-1|^\alpha dx, \quad x \in [-1,1].
\]
Although (\ref{general_edge}) for $ \mu_\alpha^e $ was previously known, one has to go a longer way to obtain (\ref{general_bulk}) for $ \mu_\alpha^b $. This is done in Section \ref{section_model_cases} with the use of Riemann-Hilbert methods. In Section \ref{section_Christoffel_functions} we use the polynomial inverse image method of Totik to prove Theorems \ref{main_theorem_bulk} and \ref{main_theorem_edge} in their full generality. After this, Theorem \ref{main_theorem_universality_edge} follows immediately from Theorem \ref{main_theorem_edge} by applying Lubinsky's result \cite[Theorem 1.2]{Lubinsky_3}. To prove Theorem \ref{main_theorem_universality_bulk}, we have to work some more, since Lubinsky's theorem cannot be applied to points in the interior of the support. Therefore we have to build the same machinery to handle this case, which will be done in Section \ref{section_universality}.

\subsection{Acknowledgements} I would like to express my deepest gratitude to Doron Lubinsky and Vilmos Totik for the useful discussions throughout the preparation of this paper.

\section{Model cases}\label{section_model_cases}

In this section our goal is to prove (\ref{general_universality_bulk}) and (\ref{general_universality_edge}) for the measures $ \mu_\alpha^b $ and $ \mu_\alpha^e $ which are supported on $ [-1,1] $ and given by
\begin{equation}\label{mu_alpha_bulk}
	d\mu_\alpha^b(x) = |x|^{\alpha} dx, \quad x \in [-1,1],
\end{equation}
and
\begin{equation}\label{mu_alpha_edge}
	d\mu_\alpha^e(x) = |x-1|^{\alpha} dx, \quad x \in [-1,1],
\end{equation}
where $ \alpha > -1 $ is arbitrary.

\subsection{Model case for the bulk}

First we prove (\ref{general_universality_bulk}) for $ \mu_\alpha^b $. Although $ \lambda_n(\mu_\alpha^b, 0) $ was studied in \cite{Nevai_2} (along with $ \lambda_n(\mu_\alpha^e, 1) $), we need to study $ \lambda_n(\mu_\alpha^b, a/n) $ for an arbitrary $ a $. To do this, we use the Riemann-Hilbert method. During this part we follow closely in the lines of \cite{Kuijlaars-Vanlessen} and \cite{Vanlessen}. Although the Riemann-Hilbert analysis for generalized Jacobi measures was carried out by M. Vanlessen in \cite{Vanlessen}, it does not cover the asymptotics for the Christoffel-Darboux kernels. First we define a Riemann-Hilbert problem for the $ 2 \times 2 $ matrix-valued function $ Y(z): \mathbb{C} \to \mathbb{C}^{2 \times 2} $, which can be expressed in terms of the orthogonal polynomials. Then via a series of transformations $ Y \mapsto T \mapsto S \mapsto R $ a $ 2 \times 2 $ matrix-valued function $ R(z) $ can be obtained for which asymptotic behavior is known. These transformations can be unraveled to obtain strong asymptotics for the orthogonal polynomials for $ \mu_\alpha^b $ which will yield (\ref{general_universality_bulk}) in this special case. Since this section does not contain new ideas, we shall carry out the analysis as briefly as possible. The interested reader can find the details in \cite{Kuijlaars-Vanlessen} and \cite{Vanlessen}.

\begin{proposition}\label{model_case_bulk_Christoffel_proposition}
Let $ \mu_\alpha^b $ be the measure supported on $ [-1,1] $ defined as
\[
	d\mu_\alpha^b(x) = |x|^\alpha dx, \quad x \in [-1,1],
\]
where $ \alpha > -1 $ and $ \alpha \neq 0 $. Then for the normalized Christoffel-Darboux kernel,
\begin{equation}\label{model_case_bulk_normalized}
	\frac{1}{n} \widetilde{K}_n\bigg(\frac{a}{n}, \frac{b}{n} \bigg) = \mathbb{L}_\alpha(a, b) + O\bigg(\frac{|a|^{\alpha/2} |b|^{\alpha/2}}{n} \bigg)
\end{equation}
holds uniformly for $ a, b $ in bounded subsets of $ (-\infty, 0) \cup (0, \infty) $, where $ \mathbb{L}_\alpha(a,b) $ is defined by (\ref{kernel_function}). Moreover, for the non-normalized Christoffel-Darboux kernel, we have
\begin{equation}\label{model_case_bulk}
	\frac{1}{n^{\alpha + 1}}K_n \bigg( \frac{a}{n}, \frac{b}{n} \bigg) = \mathbb{L}_{\alpha}^{*}(a,b) + O(1/n)
\end{equation}
uniformly for $ a, b $ in compact subsets of the real line, where $ \mathbb{L}_\alpha^*(a) $ is defined by (\ref{kernel_star_function}).
\end{proposition}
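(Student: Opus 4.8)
The plan is to run the Deift--Zhou steepest descent analysis of the Fokas--Its--Kitaev Riemann--Hilbert problem for the monic orthogonal polynomials $\pi_n$ of $\mu_\alpha^b$, following \cite{Kuijlaars-Vanlessen} and \cite{Vanlessen}, and then to read off the Christoffel--Darboux kernel near the singular point $0$ from the resulting strong asymptotics. Let $Y(z)$ be the $2\times 2$ matrix with $Y_{11}=\pi_n$ and $Y_{21}$ a fixed multiple of $\pi_{n-1}$, solving the usual RHP with a triangular jump on $[-1,1]$ having $w(x)=|x|^\alpha$ in the upper–right corner and normalization $Y(z)z^{-n\sigma_3}\to I$. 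One has the representation
\[
	K_n(x,y) = \frac{1}{2\pi i (x - y)}\begin{pmatrix} 0 & 1 \end{pmatrix} Y_+^{-1}(y)\, Y_+(x) \begin{pmatrix} 1 \\ 0 \end{pmatrix}, \qquad x,y\in(-1,1),
\]
so it suffices to produce uniform asymptotics for $Y_+$ in a fixed neighbourhood of $0$ on the scale $x=a/n$, and then to multiply by the weight factors to pass to $\widetilde K_n$.

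The transformations $Y\mapsto T\mapsto S\mapsto R$ are the standard ones: normalization at infinity using the $g$-function of the equilibrium (arcsine) measure $\omega_{[-1,1]}(x)=1/(\pi\sqrt{1-x^2})$; lens opening around $[-1,1]$, turning the oscillatory jump into a jump exponentially close to an antidiagonal constant on $(-1,1)$ and to $I$ off $[-1,1]$; and removal of the global parametrix $N(z)$, which is built from the Szeg\H{o} function $D(z)$ of the weight $|x|^\alpha$ and hence carries the interior algebraic singularity at $0$. Near $\pm1$ one uses the classical Airy parametrices. The only genuinely non-standard ingredient is the local parametrix in a fixed disc $U_0$ around $0$: it is the model RHP for an interior power-type singularity, solved explicitly in terms of Bessel functions of orders $\tfrac{\alpha\pm1}{2}$ (equivalently confluent hypergeometric functions, the two orders reflecting the quadratic relation of this interior problem to a hard-edge Bessel problem), exactly as constructed by Vanlessen. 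One takes a conformal map $\zeta$ on $U_0$ with $\zeta(0)=0$, normalized so that $n\zeta(a/n)\to a$ (consistently with $\pi\,\omega_{[-1,1]}(0)=1$), sets $P(z)=E(z)\,\Psi_\alpha(n\zeta(z))\,W(z)$, and checks the matching $P(z)N(z)^{-1}=I+O(1/n)$ on $\partial U_0$; the small-norm theory then gives $R(z)=I+O(1/n)$ uniformly on $\mathbb{C}$. The evenness of $\mu_\alpha^b$ can be used to reduce the bookkeeping and is what underlies the identity $\mathbb{L}_\alpha(-a,-b)=\mathbb{L}_\alpha(a,b)$ in (\ref{kernel_function}).

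With $R=I+O(1/n)$ in hand, one unravels $Y$ inside $U_0$ as a product of explicit invertible factors times $R$, $N$ and $\Psi_\alpha$, substitutes $x=a/n$, $y=b/n$, and uses $n\zeta(a/n)\to a$. The factors of $R$ contribute $I+O(1/n)$; the $|x|^\alpha$ coming from the weight is cancelled by the corresponding factor of $D$; and the matrix-valued part of $Y_+^{-1}(b/n)\,Y_+(a/n)$ converges to the bilinear combination of $J_{\frac{\alpha\pm1}{2}}$ appearing in (\ref{kernel_function}). Tracking the remaining constants, in particular $\gamma_{n-1}/\gamma_n\to\operatorname{cap}[-1,1]=1/2$ together with the $E,W$ factors of the local parametrix, yields (\ref{model_case_bulk_normalized}) first for $a,b$ of the same sign. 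When $a$ and $b$ lie on opposite sides of $0$, the two points sit in different sectors of the Bessel model problem $\Psi_\alpha$, and the relevant connection formulae produce precisely the second branch of (\ref{kernel_function}); combined with the case $a,b<0$, handled by evenness, this gives (\ref{model_case_bulk_normalized}) in full. Finally, dividing by $|a/n|^{\alpha/2}|b/n|^{\alpha/2}$ and using $J_\nu(z)=z^\nu G(z)$ with $G$ entire converts this into $\tfrac{1}{n^{\alpha+1}}K_n(a/n,b/n)=\mathbb{L}_\alpha^*(a,b)+O(1/n)$; since $\mathbb{L}_\alpha^*$ is now entire in $a,b$ and the cancellation of the weight against the Szeg\H{o} factor is exact, the $O(1/n)$ is uniform on compact subsets of all of $\mathbb{R}$, including $a=b$ and $a=b=0$, which is (\ref{model_case_bulk}).

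\textbf{Main obstacle.} The steepest descent analysis itself is essentially that of \cite{Vanlessen}, so the real work is the last step: isolating the exact constants and the precise combination of Bessel functions emerging from $\Psi_\alpha$, and in particular handling arguments $a,b$ on opposite sides of the singularity (different sectors of the Bessel model RHP) and showing they reproduce the second line of (\ref{kernel_function}) and, after normalization, glue consistently with the first. A secondary delicate point is that the error estimate for $\mathbb{L}_\alpha^*$ must be genuinely uniform up to and including $a=b=0$, which relies on the cancellation of $|x|^\alpha$ against the Szeg\H{o} factor being exact rather than merely asymptotic.
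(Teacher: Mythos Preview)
Your proposal is correct and follows essentially the same route as the paper's proof: the Riemann--Hilbert steepest descent of Vanlessen for the interior power singularity, unraveled near $0$ to express the first column of $Y_+$ via $J_{(\alpha\pm1)/2}$, then inserted into the Christoffel--Darboux formula with the four sign cases for $(a,b)$ reduced to two by the evenness of $\mu_\alpha^b$. The only notable divergence is the last step, uniformity of (\ref{model_case_bulk}) at $a=0$ or $b=0$: the paper does not argue this directly from an exact weight/Szeg\H{o} cancellation as you suggest, but instead first obtains (\ref{model_case_bulk}) uniformly on bounded subsets of $\mathbb{R}\setminus\{0\}$ and then invokes \cite[Theorem~1.1]{Danka-Totik} to close the gap at the origin.
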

\begin{proof}
First we show (\ref{model_case_bulk_normalized}) using the Riemann-Hilbert method following in the steps of Kuijlaars and Vanlessen \cite{Kuijlaars-Vanlessen} and Vanlessen \cite{Vanlessen}, then we show (\ref{model_case_bulk}) by normalizing and appealing to known results for $ a = b = 0 $. We define the Riemann-Hilbert problem for the $ 2 \times 2 $ matrix valued function $ Y(z) = (Y_{ij}(z))_{i,j=1}^{2} $ as in \cite{Vanlessen}. Suppose that \\
(a) $ Y(z) $ is analytic for $ z \in \mathbb{C} \setminus [-1,1] $. \\[0.05cm]
(b) For all $ x \in (-1,0) \cup (0,1) $ the limits
\[
	Y_+(x) = \lim_{\substack{z \to x \\ \operatorname{Im}(z) > 0}} Y(z), \quad Y_-(x) = \lim_{\substack{z \to x \\ \operatorname{Im}(z) < 0}} Y(z)
\]
exist and the jump condition
\[
	Y_+(x) = Y_-(x) \begin{pmatrix} 1 & |x|^\alpha \\ 0 & 1 \end{pmatrix}, \quad x \in (-1,0) \cup (0,1)
\]
holds. \\[0.05cm]
(c) For the behavior of $ Y(z) $ near infinity we have
\[
	Y(z) = (I + O(z^{-1})) \begin{pmatrix} z^n & 0 \\ 0 & z^{-n} \end{pmatrix},
\]
as $ z \to \infty $. \\[0.05cm]
(d) For the behavior of $ Y(z) $ near $ 0 $ we have
\[
	Y(z) = \begin{cases}
	O \begin{pmatrix} 1 & |z|^{\alpha/2} \\ 1 & |z|^{\alpha/2} \end{pmatrix} & \textnormal{if } \alpha < 0  \\
	O \begin{pmatrix} 1 & 1 \\ 1 & 1 \end{pmatrix} & \textnormal{if } \alpha > 0
	\end{cases}
\]
as $ z \to 0 $ in $ \mathbb{C} \setminus [-1,1] $. \\[0.05cm]
(e) For the behavior of $ Y(z) $ near the endpoints $ (-1)^k $ for $ k = 1, 2 $ we have
\[
	Y(z) = \begin{pmatrix} 1 & \log |z - (-1)^k| \\ 1 & \log |z - (-1)^k | \end{pmatrix}
\]
as $ z \to (-1)^k $ in $ \mathbb{C} \setminus [-1,1] $. \\

The unique solution for this Riemann-Hilbert problem can be expressed in terms of orthogonal polynomials. If $ \pi_n(\mu_\alpha^b, z) = \pi_n(z) $ denotes the monic orthogonal polynomial of degree $ n $ with respect to the measure $ \mu_\alpha^b $ and $ \gamma_n(\mu_\alpha^b) = \gamma_n $ denotes the leading coefficient of the orthonormal polynomial $ p_n(\mu_\alpha^b, z) = p_n(z) $, then, see \cite[Theorem 2.2]{Vanlessen}, $ Y(z) $ takes the form
\begin{equation}\label{model_case_Y_OP}
	Y(z) =
	\begin{pmatrix}
	\pi_n(z) & \frac{1}{2\pi i} \int_{-1}^{1} \frac{\pi_n(x)}{x - z} d\mu_\alpha^b(x) \\
	-2\pi i \gamma_{n-1}^{2} \pi_{n - 1}(z) & - \gamma_{n-1}^{2} \int_{-1}^{1} \frac{\pi_{n - 1}(x)}{x - z} d\mu_\alpha^b(x)
	\end{pmatrix}.
\end{equation}
To give an asymptotic formula for $ Y(z) $, we need to introduce some special functions. Let $ w(z) $ be an analytic continuation of the function $ |x|^\alpha $ to the two half-planes defined by
\begin{equation}\label{model_case_definition_w}
	w(z) = 
	\begin{cases}
		(-z)^{\alpha} & \textnormal{if } \operatorname{Re}(z) < 0, \\
		z^{\alpha} & \textnormal{if } \operatorname{Re}(z) > 0.
	\end{cases}
\end{equation}
as in \cite[(3.4)]{Vanlessen}, and define the function $ W(z) $ for all $ z \in \mathbb{C} \setminus \mathbb{R} $ similarly by
\begin{equation}\label{model_case_definition_W}
	W(z) =
	\begin{cases}
		z^{\alpha/2} & \textnormal{if } \operatorname{Re}(z) < 0, \\
		(-z)^{\alpha/2} & \textnormal{if } \operatorname{Re}(z) > 0
	\end{cases}
\end{equation}
so that overall, we have
\begin{equation}\label{W_w_relation}
	W^2(z) =
	\begin{cases}
		w(z) e^{-i \pi \alpha} & \text{if } \operatorname{Re}(z) \operatorname{Im}(z) \geq 0, \\
		w(z) e^{i \pi \alpha} & \text{if } \operatorname{Re}(z) \operatorname{Im}(z) < 0.
	\end{cases}
\end{equation}
The function $ \varphi(z) = z + \sqrt{z^2 - 1} $ denotes the conformal mapping of $ \mathbb{C} \setminus [-1,1] $ onto the exterior of the unit circle and the auxiliary function $ f(z) $ is defined by
\begin{equation}\label{model_case_definition_f}
	f(z) =
	\begin{cases}
		i \log \varphi(z) - i \log \varphi_+(0) & \textnormal{if } \operatorname{Im}(z) > 0, \\
		- i \log \varphi(z) - i \log \varphi_+(0) & \textnormal{if } \operatorname{Im}(z) < 0,
	\end{cases}
\end{equation}
where $ \varphi_+(0) = \lim_{z \to 0, \operatorname{Im}(z) > 0} \varphi(z) = i $. Now we divide the complex plane into eight congruent octants defined by
\[
	O_i = \bigg\{ z:  \frac{(k-1)\pi}{4} \leq \arg(z) \leq \frac{k \pi}{4} \bigg\}, \quad i = 1, 2, \dots, 8.
\]
Define a $ 2 \times 2 $ matrix valued function $ \psi(z) $ in the first and fourth octants $ O_1 $ and $ O_4 $ by
\begin{equation}\label{model_case_definition_psi}
	\psi(z) =
	\begin{cases}
		\frac{1}{2} \sqrt{\pi} z^{1/2} \begin{pmatrix} e^{-i\frac{2\alpha + 1}{4}\pi} H_{\frac{\alpha + 1}{2}}^{(2)}(z) & -i e^{i\frac{2\alpha + 1}{4}\pi} H_{\frac{\alpha + 1}{2}}^{(1)}(z) \\ e^{-i\frac{2\alpha + 1}{4}\pi} H_{\frac{\alpha - 1}{2}}^{(2)}(z) & -i e^{i\frac{2\alpha + 1}{4}\pi} H_{\frac{\alpha - 1}{2}}^{(1)}(z) \end{pmatrix} & z \in O_1, \\
	\frac{1}{2}\sqrt{\pi}(-z)^{1/2} \begin{pmatrix} i e^{i \frac{2\alpha + 1}{4}\pi} H_{\frac{\alpha + 1}{2}}^{(1)}(-z) & - e^{- i \frac{2\alpha + 1}{4}\pi} H_{\frac{\alpha + 1}{2}}^{(2)}(-z) \\ - i e^{i \frac{2\alpha + 1}{4}\pi} H_{\frac{\alpha - 1}{2}}^{(1)}(-z) & e^{-i \frac{2\alpha + 1}{4}\pi} H_{\frac{\alpha - 1}{2}}^{(2)}(-z) \end{pmatrix} & z \in O_4,

	\end{cases}
\end{equation}
where $ H_{\gamma}^{(1)} $ and $ H_{\gamma}^{(2)} $ denotes the Hankel functions of the first and second kind of order $ \gamma $. For more on the Hankel functions, see \cite[9.1.3, 9.1.4]{Abramowitz-Stegun}. The $ 2 \times 2 $ matrix $ \sigma_3 $ denotes the Pauli matrix
\[
	\sigma_3 = \begin{pmatrix} 1 & 0 \\ 0 & -1 \end{pmatrix}
\]
and for a function $ h(z) $ the symbol $ h(z)^{\sigma_3} $ denotes 
\[
	h(z)^{\sigma_3} = \begin{pmatrix} h(z) & 0 \\ 0 & h(z)^{-1} \end{pmatrix}.
\]
The definition of $ \psi(z) $ can be extended to the whole complex plane, but to avoid complications, we shall define it only on $ O_1 $ and $ O_4 $, because this will be sufficient for our purposes. For the complete definition, see \cite[Section 4.2]{Vanlessen}.
The function $ N(z) $, which is a solution of a different Riemann-Hilbert problem, is defined as
\begin{equation}\label{model_case_definition_N}
	N(z) = D_{\infty}^{\sigma_3} \begin{pmatrix} \frac{a(z) + a(z)^{-1}}{2} & \frac{a(z) - a(z)^{-1}}{2i} \\ \frac{a(z) - a(z)^{-1}}{-2i} & \frac{a(z) + a(z)^{-1}}{2} \end{pmatrix} D(z)^{\sigma_3},
\end{equation}
where $ a(z) = \frac{(z-1)^{1/4}}{(z + 1)^{1/4}} $ and $ D(z) $ is the Szeg\H{o} function (associated to the measure $ \mu_\alpha^b $), which is given by
\[
	D(z) = \frac{z^{\alpha/2}}{\varphi(z)^{\alpha/2}}
\]
and $ D_\infty = \lim_{z \to \infty} D(z) $. Finally we define the auxiliary function $ E_n(z) $ on the first and fourth octant $ O_1 $ and $ O_4 $ as
\begin{equation}\label{model_case_definition_E_n}
	E_n(z) =
	\begin{cases}
		N(z) W(z)^{\sigma_3} e^{\frac{\alpha}{4} \pi i \sigma_3} i^{n \sigma_3} e^{-\frac{\pi}{4} i \sigma_3} \frac{1}{\sqrt{2}} \begin{pmatrix} 1 & i \\ i & 1 \end{pmatrix} & z \in O_1, \\
		N(z) W(z)^{\sigma_3} e^{-\frac{\alpha}{4}\pi i \sigma_3} i^{n \sigma_3} e^{-\frac{\pi}{4} i \sigma_3} \frac{1}{\sqrt{2}} \begin{pmatrix} 1 & i \\ i & 1 \end{pmatrix} & z \in O_4.
	\end{cases}
\end{equation}
$ E_n(z) $ can also be defined on the whole complex plane, but we shall only work on the first and fourth octant. For the complete definition see \cite[Section 4.3]{Vanlessen}. \\

In order to prove (\ref{model_case_bulk_normalized}), we shall need two lemmas which are the analogues of \cite[Lemmas 3.3 and 3.5]{Kuijlaars-Vanlessen}. 

\begin{lemma}\label{model_case_lemma_1}
	For every $ x \in (0, \delta) $, where $ \delta $ is small enough, the first column of $ Y(z) $ given by (\ref{model_case_Y_OP}) takes the form
\begin{equation}\label{model_case_lemma_1_Y_positive}
\begin{aligned}
	\begin{pmatrix} Y_{11}(x) \\ Y_{21}(x) \end{pmatrix} = \sqrt{\frac{\pi n}{w(x)}} & 2^{-n \sigma_3}M_+(x) (\pi/2 - \arccos x)^{1/2} \\
	& \times \begin{pmatrix} e^{-i \frac{\pi}{4}} J_{\frac{\alpha + 1}{2}}\big(n(\pi/2 - \arccos x)\big) \\ e^{-i \frac{\pi}{4}} J_{\frac{\alpha - 1}{2}}\big(n(\pi/2 - \arccos x)\big) \end{pmatrix},
\end{aligned}
\end{equation}
and for every $ x \in (-\delta, 0) $, it takes the form
\begin{equation}\label{model_case_lemma_1_Y_negative}
\begin{aligned}
	\begin{pmatrix} Y_{11}(x) \\ Y_{21}(x) \end{pmatrix} = \sqrt{\frac{\pi n}{w(x)}} & 2^{-n \sigma_3}M_+(x) (\arccos x - \pi/2)^{1/2} \\
	& \times \begin{pmatrix} -e^{-i \frac{\pi}{4}} J_{\frac{\alpha + 1}{2}}\big(n(\arccos x - \pi/2)\big) \\ e^{-i \frac{\pi}{4}} J_{\frac{\alpha - 1}{2}}\big(n(\arccos x - \pi/2)\big) \end{pmatrix},
\end{aligned}
\end{equation}
where $ M(z) $ denotes
\begin{equation}\label{model_case_lemma_1_M}
	M(z) = R(z) E_n(z)
\end{equation}
and $ M_+(x) = \lim_{z \to x, \operatorname{Im}(z) > 0} M(z) $. Moreover, $ \det M(z) \equiv 1 $, $ M(z) $ is analytic in $ O_1 \cup O_4 $, and also $ M(z) $ and $ \frac{d}{dz} M(z) $ are uniformly bounded in $ (O_1 \cup O_4) \cap \{ z: |z| \leq \delta \} $.
\end{lemma}
\begin{proof} Unraveling the series of transformations $ Y \mapsto T \mapsto S \mapsto R $ described in \cite{Vanlessen} it can be obtained that in the first and fourth octant $ O_1, O_4 $ and near the origin, $ Y(z) $ takes the form
\begin{equation}\label{model_case_Y}
	Y(z) = 2^{-n \sigma_3} R(z) E_n(z) \psi(n f(z)) W(z)^{-\sigma_3} \begin{pmatrix} 1 & 0 \\ \frac{1}{w(z)} & 1 \end{pmatrix},
\end{equation}
where $ R(z) $ is analytic and $ R(z) = I  + O(1/n) $ uniformly in a small neighbourhood of the origin. From now on, we have to distinguish between the cases $ x \in (0, \delta) $ and $ x \in (-\delta, 0) $. \\

\emph{First case: $ x \in (0, \delta) $.} To obtain (\ref{model_case_lemma_1_Y_positive}), we work in $ O_1 $ and let $ z \to x $ in there. Since (\ref{W_w_relation}) gives that $ W(z) = w^{1/2}(z) e^{- i \pi \alpha/2} $ for $ z \in O_1 $,
\[
	W(z)^{-\sigma_3} \begin{pmatrix} 1 & 0 \\ \frac{1}{w(z)} & 1 \end{pmatrix} = \begin{pmatrix} w(z)^{-1/2} e^{i\pi \alpha/2} & 0 \\ w(z)^{-1/2} e^{-i\pi \alpha/2} & w(z)^{1/2} e^{-i\pi \alpha/2} \end{pmatrix}.
\]
Combining this with (\ref{model_case_Y}) we obtain that for $ z \in O_1 $,
\[
	\begin{pmatrix} Y_{11}(z) \\ Y_{21}(z) \end{pmatrix} = w(z)^{-1/2} 2^{-n\sigma_3} R(z) E_n(z) \psi(n f(z)) \begin{pmatrix} e^{i \pi \alpha/2} \\ e^{-i \pi \alpha/2} \end{pmatrix}
\]
holds. Now we aim to express $ \psi(n f(z)) \begin{pmatrix} e^{i\pi \alpha/2} \\ e^{-i\pi \alpha/2} \end{pmatrix} $ in terms of Bessel functions. Since $ H_{\nu}^{(1)}(z) + H_{\nu}^{(2)}(z) = 2 J_\nu(z) $, see \cite[9.1.2 and 9.1.3]{Abramowitz-Stegun}, we have
\begin{align*}
	\psi(z) \begin{pmatrix} e^{i\pi \alpha/2} \\ e^{-i\pi \alpha/2} \end{pmatrix} = \sqrt{\pi} z^{1/2} \begin{pmatrix} e^{-i\pi/4} J_{\frac{\alpha + 1}{2}}(z) \\ e^{-i\pi/4} J_{\frac{\alpha - 1}{2}}(z) \end{pmatrix},
\end{align*}
which gives
\[
	\begin{pmatrix} Y_{11}(z) \\ Y_{21}(z) \end{pmatrix} = w(z)^{-1/2} \sqrt{\pi} 2^{-n\sigma_3} R(z) E_n(z) (nf(z))^{1/2} \begin{pmatrix} e^{-i\pi/4} J_{\frac{\alpha + 1}{2}}(nf(z)) \\ e^{-i\pi/4} J_{\frac{\alpha - 1}{2}}(nf(z)) \end{pmatrix}.
\]
For the $ f(z) $ defined by (\ref{model_case_definition_f}) we have $ f_+(x) = \pi/2 - \arccos x $, therefore the above identity gives (\ref{model_case_lemma_1_Y_positive}).
Since $ \det R(z) \equiv 1 $ (use that $ \det R(z) $ is analytic and $ R(z) = I + O(1/z) $ around infinity, see \cite[Section 3.3]{Vanlessen}), it follows easily from (\ref{model_case_definition_N}) that $ \det M(z) \equiv 1 $. Moreover, $ M(z) $ is analytic in the octant $ O_1 $ near the origin. The boundedness of $ R(z) $ is implied by \cite[(3.30)]{Vanlessen}, and since it is actually analytic in some small neighbourhood of $ 0 $, the Cauchy integral formula implies that $ \frac{d}{dz} R(z) $ is also bounded in some small disk with center at the origin. The same can be said about $ E_n(z) $, see \cite[Proposition 4.5]{Vanlessen}, which implies that $ M(z) = R(z) E_n(z) $ and $ \frac{d}{dz} M(z) $ is uniformly bounded in a small disk around $ 0 $ as $ n \to \infty $. \\

\emph{Second case: $ x \in (-\delta, 0) $.} Calculating similarly as in the first case, we obtain that for $ z \in O_4 $, we have
\[
	\begin{pmatrix} Y_{11}(z) \\ Y_{21}(z) \end{pmatrix} = w(z)^{-1/2} 2^{-n\sigma_3} R(z) E_n(z) \psi(n f(z)) \begin{pmatrix} e^{-i \pi \alpha/2} \\ e^{i \pi \alpha/2} \end{pmatrix}.
\]
We also get
\[
	\psi(z) \begin{pmatrix} e^{-i \pi \alpha/2} \\ e^{i \pi \alpha/2} \end{pmatrix} = \sqrt{\pi} (-z)^{1/2} \begin{pmatrix} -e^{-i \frac{\pi}{4}} J_{\frac{\alpha+1}{2}}(-z) \\ e^{-i \frac{\pi}{4}}J_{\frac{\alpha - 1}{2}}(-z) \end{pmatrix},
\]
which gives
\[
	\begin{pmatrix} Y_{11}(z) \\ Y_{12}(z) \end{pmatrix} = w(z)^{-1/2} \sqrt{\pi} 2^{-n\sigma_3} R(z)E_n(z) (-nf(z))^{1/2} \begin{pmatrix} -e^{-i \frac{\pi}{4}} J_{\frac{\alpha+1}{2}}(-nf(z)) \\ e^{-i \frac{\pi}{4}}J_{\frac{\alpha - 1}{2}}(-nf(z)). \end{pmatrix}
\]
Similarly as in the first case, letting $ z \to x $ through $ O_4 $ yields (\ref{model_case_lemma_1_Y_negative}).
\end{proof}

The next lemma, which is the analogue of \cite[Lemma 3.5]{Kuijlaars-Vanlessen}, studies the asymptotic behavior of $ J_\nu\big(n\big( \frac{\pi}{2} - \arccos \frac{a}{n} \big)\big) $.

\begin{lemma}\label{model_case_lemma_2}
Let $ a \in \mathbb{R} \setminus \{ 0 \} $. Define $ a_n = a/n $ and $ \tilde{a}_n  = n \big(\frac{\pi}{2} - \arccos |a_n| \big) $. Then
\begin{equation}\label{model_case_lemma_2_1}
	\tilde{a}_n = |a| + O\bigg( \frac{|a|^3}{n^2} \bigg)
\end{equation}
and
\begin{equation}\label{model_case_lemma_2_2}
	J_\alpha(\tilde{a}_n) = J_\alpha(|a|) + O\bigg( \frac{|a|^{2 + \alpha}}{n^2} \bigg)
\end{equation}
holds.
\end{lemma}
\begin{proof}
Without the loss of generality we can assume that $ a > 0 $. Since $ \pi/2 - \arccos x = x + O(x^3) $ (just use the Taylor expansion of $ \pi /2 - \arccos x = \arcsin x $), the asymptotic formula (\ref{model_case_lemma_2_1}) easily follows. As for the behavior of the Bessel functions, \cite[9.1.10]{Abramowitz-Stegun} says that $ J_\alpha(z) = z^\alpha G(z) $, where $ G(z) $ is an entire function. It follows that
\begin{align*}
	J_\alpha(\tilde{a}_n) & = \bigg( a + O\bigg( \frac{a^3}{n^2} \bigg) \bigg)^\alpha \bigg( G(a) + O \bigg( \frac{a^3}{n^2} \bigg) \bigg) \\
	& = a^\alpha \bigg( 1 + O\bigg( \frac{a^2}{n^2} \bigg) \bigg) \bigg( G(a) + O \bigg( \frac{a^3}{n^2} \bigg) \bigg) \\
	& = J_\alpha(a) + O\bigg( \frac{a^{2 + \alpha}}{n^2} \bigg),
\end{align*}
which is what we needed to show.
\end{proof}

To show (\ref{model_case_bulk_normalized}), we shall distinguish between the four cases (i) $ a,b \geq 0 $, (ii) $ a \geq 0, b < 0 $, (iii) $ a < 0, b \geq 0 $, (iv) $ a,b < 0 $. Because of the symmetry of $ \mu_{\alpha}^{b} $, we have $ K_n(x,y) = K_n(-x,-y) $, therefore it is enough to deal with the first two cases. \\

\emph{First case: $ a, b \geq 0 $.} Let $ a,b \in (0, \infty) $ and define $ a_n = a/n $, $ b_n = b/n $, moreover let $ \tilde{a}_n  = n \big(\frac{\pi}{2} - \arccos a_n \big) $, $ \tilde{b}_n  = n \big(\frac{\pi}{2} - \arccos b_n \big) $. First we shall express  $ \widetilde{K}_{n}(x,y) $ in terms of $ Y_{11}(x) $ and $ Y_{21}(x) $. (\ref{model_case_Y_OP}) gives that $ Y_{11}(x) = \frac{1}{\gamma_n}p_n(x) $ and $ Y_{21}(x) = -2\pi i \gamma_{n - 1} p_{n-1}(x) $. Using the Christoffel-Darboux formula (\ref{Christoffel-Darboux}) with Lemma \ref{model_case_lemma_1} yields
\begin{align*}
	\frac{1}{n} \widetilde{K}_{n}(a_n &, b_n) = \frac{1}{2\pi i (b - a)} \sqrt{w(a_n) w(b_n)} \Big( Y_{11}(a_n) Y_{21}(b_n) - Y_{11}(b_n) Y_{21}(a_n) \Big) \\
	 & = \frac{1}{2\pi i (b - a)} \sqrt{w(a_n) w(b_n)} \det \begin{pmatrix} Y_{11}(a_n) & Y_{11}(b_n) \\ Y_{21}(a_n) & Y_{21}(b_n) \end{pmatrix} \\
	& = \frac{1}{2\pi i (b - a)} \det \begin{pmatrix} \sqrt{w(a_n)} Y_{11}(a_n) & \sqrt{w(b_n)} Y_{11}(b_n) \\ \sqrt{w(a_n)} Y_{21}(a_n) & \sqrt{w(b_n)} Y_{21}(b_n) \end{pmatrix} \\
	 & = \frac{n}{2(a - b)} \det \Bigg[ M_+(a_n) (\tilde{a}_n / n)^{1/2} \begin{pmatrix} J_{\frac{\alpha + 1}{2}}(\tilde{a}_n) & 0 \\ J_{\frac{\alpha - 1}{2}}(\tilde{a}_n) & 0 \end{pmatrix} \\ 
	& \phantom{aaaaaaaaaaaaa} + M_+(b_n) (\tilde{b}_n / n)^{1/2} \begin{pmatrix} 0 & J_{\frac{\alpha + 1}{2}}(\tilde{b}_n) \\ 0 & J_{\frac{\alpha - 1}{2}}(\tilde{b}_n) \end{pmatrix} \Bigg] \\
\end{align*}
\begin{align*}
	\phantom{\frac{1}{n} \widetilde{K}_{n}(a_n}
	& = \frac{n}{2(a - b)} \det \Bigg[ M_+(b_n) \Bigg\{ \begin{pmatrix} (\tilde{a}_n / n)^{1/2} J_{\frac{\alpha + 1}{2}}(\tilde{a}_n) & (\tilde{b}_n / n)^{1/2} J_{\frac{\alpha + 1}{2}}(\tilde{b}_n) \\ (\tilde{a}_n / n)^{1/2} J_{\frac{\alpha - 1}{2}}(\tilde{a}_n) & (\tilde{b}_n / n)^{1/2} J_{\frac{\alpha - 1}{2}}(\tilde{b}_n) \end{pmatrix}  \\
	& \phantom{\begin{pmatrix} 0 \\ 0 \end{pmatrix}aaaaaaaaaa} + M_+(b_n)^{-1} (M_+(a_n) - M_+(b_n)) \\
	& \phantom{aaaaaaaaaaaaaaaaaa} \times \begin{pmatrix} (\tilde{a}_n / n)^{1/2} J_{\frac{\alpha + 1}{2}}(\tilde{a}_n) & 0 \\ (\tilde{a}_n / n)^{1/2} J_{\frac{\alpha - 1}{2}}(\tilde{a}_n) & 0 \end{pmatrix} \Bigg\} \Bigg].
\end{align*}
Since $ M(z) $ is uniformly bounded and its determinant is $ 1 $ (see Lemma \ref{model_case_lemma_1}), $ M(z)^{-1} $ is also uniformly bounded, moreover the uniform boundedness of $ \frac{d}{dz} M(z) $ imply $ M_+(a_n) - M_+(b_n) = O( \frac{a - b}{n} ) $. We also have $ J_\alpha(\tilde{a}_n) = O(a^\alpha) $ (use (\ref{model_case_lemma_2_2}) and the fact that $ J_\alpha(z) = z^\alpha G(z) $, where $ G(z) $ is an entire function), which gives
\[
	M_+(b_n)^{-1} (M_+(a_n) - M_+(b_n))\begin{pmatrix} (\tilde{a}_n / n)^{1/2} J_{\frac{\alpha + 1}{2}}(\tilde{a}_n) & 0 \\ (\tilde{a}_n / n)^{1/2} J_{\frac{\alpha - 1}{2}}(\tilde{a}_n) & 0 \end{pmatrix} = \begin{pmatrix} O\big( \frac{a-b}{n^{3/2}} a^{\frac{\alpha+2}{2}} \big) & 0 \\ O\big( \frac{a-b}{n^{3/2}} a^{\frac{\alpha}{2}} \big) & 0 \end{pmatrix}.
\]
From these and $ \det M(z) \equiv 1 $ it follows that
\begin{equation}\label{model_case_K_tilde_det}
\begin{aligned}
	\frac{1}{n} & \widetilde{K}_{n}(a_n, b_n) = \frac{1}{2(a -b )} \det \begin{pmatrix} \tilde{a}_n^{1/2} J_{\frac{\alpha + 1}{2}}(\tilde{a}_n) + O\big( \frac{a-b}{n} a^{\frac{\alpha+2}{2}} \big) & \tilde{b}_n^{1/2} J_{\frac{\alpha + 1}{2}}(\tilde{b}_n) \\ \tilde{a}_n^{1/2} J_{\frac{\alpha - 1}{2}}(\tilde{a}_n) + O\big( \frac{a-b}{n} a^{\frac{\alpha}{2}} \big) & \tilde{b}_n^{1/2} J_{\frac{\alpha - 1}{2}}(\tilde{b}_n) \end{pmatrix} \\
	& = \frac{\tilde{a}_n^{1/2}\tilde{b}_n^{1/2}}{2(a -b)} \det \begin{pmatrix} J_{\frac{\alpha + 1}{2}}(\tilde{a}_n) & J_{\frac{\alpha + 1}{2}}(\tilde{b}_n) \\ J_{\frac{\alpha - 1}{2}}(\tilde{a}_n) & J_{\frac{\alpha - 1}{2}}(\tilde{b}_n) \end{pmatrix} + O \bigg( \frac{ a^{\alpha/2} b^{\alpha/2}}{n} \bigg) \\
	& = \frac{\tilde{a}_n^{1/2}\tilde{b}_n^{1/2} a^{\frac{\alpha - 1}{2}} b^{\frac{\alpha - 1}{2}} }{2(a -b)} \det \begin{pmatrix} a^{-\frac{\alpha - 1}{2}} J_{\frac{\alpha + 1}{2}}(\tilde{a}_n) - b^{-\frac{\alpha - 1}{2}} J_{\frac{\alpha + 1}{2}}(\tilde{b}_n) & b^{-\frac{\alpha - 1}{2}} J_{\frac{\alpha + 1}{2}}(\tilde{b}_n) \\ a^{-\frac{\alpha - 1}{2}} J_{\frac{\alpha - 1}{2}}(\tilde{a}_n) - b^{-\frac{\alpha - 1}{2}} J_{\frac{\alpha - 1}{2}}(\tilde{b}_n) & b^{-\frac{\alpha - 1}{2}} J_{\frac{\alpha - 1}{2}}(\tilde{b}_n)  \end{pmatrix} \\
	& \phantom{aaaaaaaaaaaaaaaa} + O \bigg( \frac{ a^{\alpha/2} b^{\alpha/2}}{n} \bigg),
\end{aligned}
\end{equation}
where the error term is uniform for $ a, b $ on bounded subsets of $ (0, \infty) $, even intervals of the form $ (0,c] $. Lemma \ref{model_case_lemma_2} gives
\[
	a^{-\frac{\alpha - 1}{2}} \big( J_{\frac{\alpha + 1}{2}}(\tilde{a}_n) - J_{\frac{\alpha + 1}{2}}(a) \big) = O \bigg( \frac{a^3}{n^2} \bigg)
\]
and
\[
	a^{-\frac{\alpha - 1}{2}} \big( J_{\frac{\alpha - 1}{2}}(\tilde{a}_n) - J_{\frac{\alpha - 1}{2}}(a) \big) = O \bigg( \frac{a^2}{n^2} \bigg),
\]
which imply
\begin{align*}
	a^{-\frac{\alpha - 1}{2}} J_{\frac{\alpha + 1}{2}}(\tilde{a}_n) & - b^{-\frac{\alpha - 1}{2}} J_{\frac{\alpha + 1}{2}}(\tilde{b}_n) \\
	&= a^{-\frac{\alpha - 1}{2}} J_{\frac{\alpha + 1}{2}}(a) - b^{-\frac{\alpha - 1}{2}} J_{\frac{\alpha + 1}{2}}(b) + O\bigg( \frac{a^3 - b^3}{n^2} \bigg)
\end{align*}
and
\begin{align*}
	a^{-\frac{\alpha - 1}{2}} J_{\frac{\alpha - 1}{2}}(\tilde{a}_n) & - b^{-\frac{\alpha - 1}{2}} J_{\frac{\alpha - 1}{2}}(\tilde{b}_n) \\
	& = a^{-\frac{\alpha - 1}{2}} J_{\frac{\alpha - 1}{2}}(a) - b^{-\frac{\alpha - 1}{2}} J_{\frac{\alpha - 1}{2}}(b) + O\bigg( \frac{a^2 - b^2}{n^2} \bigg).
\end{align*}
Continuing (\ref{model_case_K_tilde_det}) with these, we have
\begin{align*}
	\frac{1}{n} \widetilde{K}_{n}(a_n, b_n) & = \frac{\tilde{a}_n^{1/2} \tilde{b}_n^{1/2}}{2(a - b)} \det \begin{pmatrix} J_{\frac{\alpha + 1}{2}}(a) & J_{\frac{\alpha + 1}{2}}(b) \\ J_{\frac{\alpha - 1}{2}}(a) & J_{\frac{\alpha - 1}{2}}(b) \end{pmatrix} \\
	& + \frac{\tilde{a}_n^{1/2} \tilde{b}_n^{1/2} a^{\frac{\alpha - 1}{2}} b^{\frac{\alpha - 1}{2}}}{2(a - b)} \det \begin{pmatrix} a^{-\frac{\alpha - 1}{2}} J_{\frac{\alpha + 1}{2}}(a) - b^{-\frac{\alpha - 1}{2}} J_{\frac{\alpha + 1}{2}}(b) & O \Big( \frac{b^2}{n^2} \Big) \\ a^{-\frac{\alpha - 1}{2}} J_{\frac{\alpha - 1}{2}}(a) - b^{-\frac{\alpha - 1}{2}} J_{\frac{\alpha - 1}{2}}(b) & O \Big( \frac{b^2}{n^2} \Big) \end{pmatrix} \\
	& + O \bigg( \frac{ a^{\alpha/2} b^{\alpha/2}}{n} \bigg)
\end{align*}
In the second term, since $ J_\nu(z) = z^\nu G(z) $ where $ G(z) $ is an entire function, by the mean value theorem we have
\begin{equation}\label{model_case_a_b_positive}
	\frac{a^{-\frac{\alpha - 1}{2}} J_{\frac{\alpha \pm 1}{2}}(a) - b^{-\frac{\alpha - 1}{2}} J_{\frac{\alpha \pm 1}{2}}(b)}{a-b} = O(1),
\end{equation}
hence the second term is $ O\Big(\frac{a^{\alpha/2} b^{\alpha/2}}{n} \Big) $. In the first term $ \tilde{a}_n = a + O\big( \frac{a^3}{n^2} \big) $ and $ \tilde{b}_n = b + O\big( \frac{b^3}{n^2} \big) $ can be replaced with $ a $ and $ b $ respectively, because the resulting error terms can be absorbed into the previous error term. Overall, we have
\begin{align*}
	\frac{1}{n} \widetilde{K}_n\bigg( \frac{a}{n}, \frac{b}{n} \bigg) = \mathbb{L}_\alpha(a, b) + O\bigg(\frac{a^{\alpha/2} b^{\alpha/2}}{n} \bigg),
\end{align*}
which is uniform for $ a, b \in (0, \infty) $ in bounded sets, even when $ a - b $ is small. \\

\emph{Second case: $ a \geq 0, b < 0 $.} Let $ a \geq 0, b < 0 $ and define $ a_n = a/n $, $ b_n = b/n $, moreover let $ \tilde{a}_n  = n \big(\frac{\pi}{2} - \arccos a_n \big) $, $ \tilde{b}_n = n \big( \arccos b_n - \frac{\pi}{2} \big) = n \big( \frac{\pi}{n} - \arccos |b_n| \big) $. With similar calculations as before but with (\ref{model_case_lemma_1_Y_negative}) instead of (\ref{model_case_lemma_1_Y_positive}) we obtain (\ref{model_case_a_b_positive}). (Note that the definition of $ \mathbb{L}_\alpha(a,b) $ differs when $ a \geq 0 $ and $ b < 0 $, see (\ref{kernel_function}).) \\

By normalizing (\ref{model_case_bulk_normalized}) with $ a^{\alpha/2} b^{\alpha/2} $, we obtain (\ref{model_case_bulk}) uniformly for $ a, b $ in bounded subsets of $ \mathbb{R} \setminus \{0\} $. Using \cite[Theorem 1.1]{Danka-Totik} we see that this actually holds uniformly for $ a, b $ in compact subsets of $ \mathbb{R} $, and this is what we had to show.
\end{proof}

By letting $ b \to a $ in (\ref{model_case_bulk}) we obtain the formula
\[
	\frac{1}{n^{\alpha + 1}} K_n\bigg( \frac{a}{n}, \frac{a}{n} \bigg) = \mathbb{L}_\alpha^*(a) + O(1/n),
\]
which can be written in terms of Christoffel functions as
\begin{equation}\label{model_case_bulk_Christoffel}
	\lim_{n \to \infty} n^{\alpha + 1} \lambda_n(\mu_\alpha^b, x_0 + a/n) = \Big( \mathbb{L}_\alpha^*(a) \Big)^{-1}
\end{equation}
We shall generalize this in Section \ref{section_Christoffel_functions} for measures supported on arbitrary compact sets, and in turn this will serve a basic building block towards universality limits.

\subsection{Model case for the edge}

The measure $ \mu_\alpha^e $, which is defined by (\ref{mu_alpha_edge}), has been extensively studied by various authors and asymptotic formulas are already established. It was shown by Kuijlaars and Vanlessen in \cite{Kuijlaars-Vanlessen} that
\[
	\frac{1}{2n^2} \widetilde{K}_n\bigg( 1 - \frac{a}{2n^2}, 1 - \frac{b}{2n^2} \bigg) = \mathbb{J}_\alpha(a,b) + O\bigg( \frac{a^{\alpha/2} b^{\alpha/2}}{n} \bigg)
\]
uniformly for $ a, b $ in bounded subsets of $ (0, \infty) $, where $ \mathbb{J}_\alpha(a,b) $ is the so-called Bessel kernel defined by (\ref{bessel_kernel}). For $ a = b $, this limit can also be written in terms of Christoffel functions as
\begin{equation}\label{model_case_edge_Christoffel}
	\lim_{n \to \infty} n^{2\alpha + 2} \lambda_n\bigg(\mu_\alpha^e, 1 - \frac{a}{2n^2} \bigg) = \Big( 2^{\alpha + 1} \mathbb{J}_\alpha^*(a) \Big)^{-1},
\end{equation}
which holds uniformly for $ a $ in compact subsets of the real line, where $ \mathbb{J}_\alpha^*(a) $ is defined by $ \mathbb{J}_\alpha(a,a)/a^\alpha $.

\section{Christoffel functions}\label{section_Christoffel_functions}

Now we aim to generalize the results obtained in Section \ref{section_model_cases} for measures supported on arbitrary compact sets. We do this with the method of polynomial inverse images, which was developed by Totik in \cite{Totik_2} to prove polynomial inequalities on several intervals. Let $ T_N $ be a polynomial of degree $ N $ such that all of its zeros are real and simple, moreover suppose that if $ T_{N}^{\prime}(x_0) = 0 $ for some $ x_0 $ (i.e. $ x_0 $ is a local extrema for $ T_N$), then $ |T_N(x_0)| \geq 1 $. Such polynomials are called \emph{admissible}. The inverse image of the interval $ [-1,1] $ defined as $ E_N = T_N^{-1}([-1,1]) $ taken with respect to an admissible polynomial is a finite union of intervals, i.e. $ T_N^{-1}([-1,1]) = \cup_{k=0}^{N-1} [a_k,b_k] $, where $ T_N $ restricted to $ [a_k, b_k] $ is a bijection between $ [a_k, b_k] $ and $ [-1,1] $. These polynomial inverse images have good approximation properties which we shall use frequently, see Lemmas \ref{bulk_approximation} and \ref{edge_approximation} below.

For every integrable function $ f $ and for each $ k \in \{ 0, 1, \dots, N-1 \} $ the formula
\begin{equation}\label{general_lemniscate_integral}
\begin{aligned}
	\int_{-1}^{1} f(x) dx & = \int_{a_k}^{b_k} f(T_N(x)) |T_N^{\prime}(x)| dx \\
	& = \frac{1}{N} \int_{E_N} f(T_N(x)) |T_N^{\prime}(x)| dx
\end{aligned}
\end{equation}
also holds. For example, if $ P_n $ is a polynomial of degree $ N $, we have
\begin{equation}\label{lemniscate_integral}
\begin{aligned}
	\int_{-1}^{1} |P_n(x)|^2 |x|^\alpha dx & = \int_{a_k}^{b_k} |P_n(T_N(x))|^2 |T_N(x)|^\alpha |T_N^{\prime}(x)| dx \\
	& = \frac{1}{N} \int_{E_N} |P_n(T_N(x))|^2 |T_N(x)|^\alpha |T_N^{\prime}(x)| dx,
\end{aligned}
\end{equation}
which will be especially useful to us. In addition, the equilibrium density for $ E_N $ is given by the formula
\begin{equation}\label{inverse_image_equilibrium_general}
	\omega_{E_N}(x) = \frac{|T_{N}^{\prime}(x)|}{N \pi \sqrt{1-T_N(x)^2}},
\end{equation}
see \cite{Totik_2} for details. (Or use (\ref{general_lemniscate_integral}) and the fact that the equilibrium measure on $ E_N $ is the pullback of the equilibrium measure on $ [-1,1] $ with respect to the mapping $ T_N $.)

\subsection{Preliminary estimates}

When establishing asymptotics for $ \lambda_n(\mu, x) $, it is enough to study special subsequences from which the asymptotic behavior of the complete sequence is implied. This is summarized in the following lemma, which will be used frequently.

\begin{lemma}\label{subsequence_lemma}
Let $ \{ n_k \}_{k=1}^{\infty} $ be a subsequence of $ \mathbb{N} $ such that $ n_{k+1}/n_k \to 1 $ as $ k \to \infty $. Then for every $ \kappa > 0 $,
\[
	\liminf_{k \to \infty} n_k^\kappa \lambda_{n_k}(\mu, x) = \liminf_{n \to \infty} n^\kappa \lambda_{n}(\mu, x)
\]
and
\[
	\limsup_{k \to \infty} n_k^\kappa \lambda_{n_k}(\mu, x) = \limsup_{n \to \infty} n^\kappa \lambda_{n}(\mu, x)
\]
holds.
\end{lemma}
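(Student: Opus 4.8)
The plan is to exploit the monotonicity of Christoffel functions in the degree together with the hypothesis $n_{k+1}/n_k\to 1$. The key structural fact is that $\lambda_n(\mu,x)$ is nonincreasing in $n$: since the infimum in $(\ref{Christoffel_definition})$ is taken over a larger space $\mathcal{P}_{n}$ when $n$ grows, we have $\lambda_{n+1}(\mu,x)\le\lambda_n(\mu,x)$ for every $n$. Consequently, for any $n$ with $n_k\le n\le n_{k+1}$ (such a $k$ exists for all large $n$ because $n_{k+1}/n_k\to1$ forces $n_{k+1}-n_k\to\infty$ only in ratio, but certainly $n_k\to\infty$ and the blocks $[n_k,n_{k+1}]$ eventually cover all sufficiently large integers), we get the sandwich
\[
	\lambda_{n_{k+1}}(\mu,x)\le\lambda_n(\mu,x)\le\lambda_{n_k}(\mu,x).
\]

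First I would fix $\kappa>0$ and an integer $n$ large enough to lie in some block $[n_k,n_{k+1}]$. Multiplying the sandwich above by the appropriate powers of $n$ and inserting the ratios $n_k/n$, $n_{k+1}/n\in[n_k/n_{k+1},\,n_{k+1}/n_k]$, I obtain
\[
	\Big(\tfrac{n_{k+1}}{n}\Big)^{-\kappa} n_{k+1}^{\kappa}\lambda_{n_{k+1}}(\mu,x)\ \le\ n^{\kappa}\lambda_n(\mu,x)\ \le\ \Big(\tfrac{n_k}{n}\Big)^{-\kappa} n_k^{\kappa}\lambda_{n_k}(\mu,x).
\]
Since $n_k\le n\le n_{k+1}$ we have $1\le n/n_k\le n_{k+1}/n_k\to1$ and similarly $1\le n_{k+1}/n\le n_{k+1}/n_k\to1$, so both prefactors $(n_{k+1}/n)^{-\kappa}$ and $(n_k/n)^{-\kappa}$ tend to $1$ as $n\to\infty$ (equivalently as $k\to\infty$). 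Taking $\limsup$ as $n\to\infty$ in the right-hand inequality and noting that $n\to\infty$ forces $k\to\infty$, the prefactor disappears in the limit and the right-hand side is bounded above by $\limsup_{k\to\infty} n_k^{\kappa}\lambda_{n_k}(\mu,x)$; hence $\limsup_{n\to\infty} n^{\kappa}\lambda_n(\mu,x)\le\limsup_{k\to\infty} n_k^{\kappa}\lambda_{n_k}(\mu,x)$. The reverse inequality $\ge$ is trivial because $\{n_k\}$ is a subsequence of $\mathbb{N}$. This gives the $\limsup$ statement; taking $\liminf$ instead, and using the left-hand inequality with the analogous argument, gives the $\liminf$ statement.

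The only point requiring a modicum of care — and the one I would flag as the main (minor) obstacle — is the verification that for all sufficiently large $n$ there really is an index $k$ with $n_k\le n\le n_{k+1}$, i.e.\ that the blocks $[n_k,n_{k+1}]$ eventually exhaust $\mathbb{N}$. This follows because $n_{k+1}/n_k\to1$ implies $n_k\to\infty$, and since $\{n_k\}$ is increasing and unbounded, every integer $n\ge n_1$ lies in exactly one interval $[n_k,n_{k+1}]$; there is no gap. Once this is in place the argument is purely the squeeze described above, and no properties of $\mu$ beyond $\lambda_n\ge 0$ and the monotonicity in $n$ are used.
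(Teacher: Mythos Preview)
Your proof is correct and follows essentially the same approach as the paper: sandwich $n^\kappa\lambda_n(\mu,x)$ between the adjacent subsequence values $n_k^\kappa\lambda_{n_k}(\mu,x)$ and $n_{k+1}^\kappa\lambda_{n_{k+1}}(\mu,x)$ using the monotonicity of $\lambda_n$ in $n$, then let the correction factors $(n/n_k)^\kappa$ and $(n/n_{k+1})^\kappa$ tend to $1$. Your write-up is in fact a bit more careful than the paper's (which states the sandwich with the inequality signs reversed, evidently a typo).
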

\begin{proof}
If $ k $ is selected such that $ n_k \leq n \leq n_{k+1} $, the monotonicity of $ \lambda_n(\mu, x) $ in $ n $ implies
\[
	\bigg( \frac{n}{n_k} \bigg)^\kappa n_k^\kappa \lambda_{n_k}(\mu, x) \leq n^\kappa \lambda_n(\mu, x) \leq \bigg( \frac{n}{n_{k+1}} \bigg)^\kappa n_{k+1}^\kappa \lambda_{n_{k+1}}(\mu, x).
\]
Since $ n/n_k \to 1 $ as $ k \to \infty $, this implies what we need to prove.
\end{proof}

In order to study the asymptotic behavior of $ \lambda_n(\mu, x_0 + a/n) $, we shall need a tool to control small perturbations. The next lemma will be useful for studying Christoffel functions in the bulk of the support.

\begin{lemma}\label{perturbation_lemma}
Let $ \mu $ be a finite Borel measure and suppose that $ \mu $ is supported on a compact set $ K = \operatorname{supp}(\mu) $ on the real line. Let $ x_0 \in \operatorname{int}(K) $ be a point from the interior of its support and suppose that for some $ \varepsilon > 0 $ the measure $ \mu $ is absolutely continuous on $ (x_0 - \varepsilon, x_0 + \varepsilon) $ with
\[
	d\mu(x) = w(x) |x - x_0|^\alpha dx, \quad x \in (x_0 - \varepsilon, x_0 + \varepsilon)
\]
there for some $ \alpha > -1 $, where $ w $ is strictly positive and continuous at $ x_0 $. Then for a given sequence $ \varepsilon_n = o(n^{-1}) $,
\[
	\lim_{n \to \infty} \frac{\lambda_n(\mu, x_0 + a/n)}{\lambda_n(\mu, x_0 + a/n + \varepsilon_n)} = 1 
\]
holds uniformly for $ a \in \mathbb{R} $ in compact subsets of the real line.
\end{lemma}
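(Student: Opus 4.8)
The plan is to sandwich $\lambda_n(\mu, x_0 + a/n + \varepsilon_n)$ between two expressions involving $\lambda_n(\mu, x_0 + a/n)$ by exploiting the extremal characterization (\ref{Christoffel_definition}) and the fact that near $x_0$ the density is a small perturbation of a pure power weight. First I would localize: since $\varepsilon_n = o(n^{-1})$, both points $x_0 + a/n$ and $x_0 + a/n + \varepsilon_n$ lie in $(x_0 - \varepsilon, x_0 + \varepsilon)$ for $n$ large and $a$ in a fixed compact set. The key quantity to control is the ratio of the weight $w(x)|x-x_0|^\alpha$ at nearby points. The factor $w$ is harmless: by continuity of $w$ at $x_0$, $w(x_0 + a/n + \varepsilon_n)/w(x_0 + a/n) \to 1$ uniformly. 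The genuine issue is the singular factor $|x - x_0|^\alpha$, whose ratio at the two shifted points need not tend to $1$ when $a$ is near $0$ (e.g. if $a/n$ and $a/n + \varepsilon_n$ straddle $x_0$). So the argument must be done at the level of the whole integral, not pointwise in the weight.

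Concretely, let $P_{n-1}$ be a near-optimal polynomial for $\lambda_n(\mu, x_0 + a/n)$, normalized so $P_{n-1}(x_0 + a/n) = 1$. To estimate $\lambda_n(\mu, x_0 + a/n + \varepsilon_n)$ from above I would use the \emph{same} polynomial $P_{n-1}$, renormalized by dividing by $P_{n-1}(x_0 + a/n + \varepsilon_n)$; this requires showing that $P_{n-1}(x_0 + a/n + \varepsilon_n) \to 1$. That last fact follows from a Bernstein–Markov/Nikolskii-type bound: a polynomial of degree $n-1$ bounded in $L^2(\mu)$-norm by $O(\lambda_n^{1/2}) = O(n^{-(\alpha+1)/2})$ and with value $1$ at a point $x_0 + a/n$ in the bulk cannot change by more than $o(1)$ over a window of length $\varepsilon_n = o(1/n)$, because its derivative there is controlled; here one can borrow the already-established model-case asymptotics or standard Christoffel-function localization to bound $|P_{n-1}'|$ on the relevant scale. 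Thus
\[
\lambda_n(\mu, x_0 + a/n + \varepsilon_n) \le \int \frac{|P_{n-1}(x)|^2}{|P_{n-1}(x_0 + a/n + \varepsilon_n)|^2}\, d\mu(x) = (1+o(1))\,\lambda_n(\mu, x_0 + a/n),
\]
uniformly in $a$, and the reverse inequality follows by symmetry (swap the roles of the two points, replacing $\varepsilon_n$ by $-\varepsilon_n$, which is again $o(1/n)$). Combining the two directions gives the claimed limit.

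The main obstacle, and the step I would spend the most care on, is the uniform-in-$a$ control of $P_{n-1}$ on the scale $1/n$ near $x_0$, \emph{including the delicate case $a \approx 0$} where the singularity of $|x-x_0|^\alpha$ sits right at the evaluation scale. The cleanest route is probably to avoid differentiating $P_{n-1}$ directly and instead argue via comparison of Christoffel functions: use that for any two points $y, y'$ with $|y - y'| = o(1/n)$ in the bulk, the ratio $\lambda_n(\mu,y)/\lambda_n(\mu,y')$ is pinched to $1$ by comparing the weight $w(x)|x-x_0|^\alpha$ on $(x_0-\varepsilon,x_0+\varepsilon)$ with the majorant/minorant $c^{\pm}_n |x - x_0|^\alpha$ obtained from the continuity of $w$, and then invoking the already-proved model-case asymptotics (\ref{model_case_bulk_Christoffel}) together with a rescaling of the Christoffel extremal problem, noting that multiplying the weight by a fixed power of $(1+o(1))$ and translating the base point by $o(1/n)$ changes the infimum only by $1 + o(1)$ once one verifies that the optimal polynomials for the power weight $|x-x_0|^\alpha$ do not concentrate anomalously near $x_0 + a/n$. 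One must keep the estimates uniform as $a$ ranges over a compact set, which forces all the $o(1)$ terms above to be estimated by quantities depending only on $\varepsilon_n$, the modulus of continuity of $w$ at $x_0$, and $\sup|a|$, not on $a$ itself; this is the bookkeeping that makes the argument slightly technical but not conceptually hard.
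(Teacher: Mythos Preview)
Your approach differs from the paper's and leaves a genuine gap precisely at the step you flag as the main obstacle. The paper does not manipulate the extremal polynomial $P_{n-1}$ at all; its key observation is that $1/\lambda_n(\mu,y)=K_n(y,y)=\sum_{k<n}p_k(y)^2$ is a polynomial of degree $2n-2$ in $y$, so the target ratio
\[
Q_n(x):=\frac{\lambda_n(\mu,x_0+a/n)}{\lambda_n(\mu,x_0+a/n+x)}
\]
is itself a polynomial with $Q_n(0)=1$. Most of the proof is then spent transferring Nevai's classical two-sided bound $\lambda_n(|t|^\alpha dt,x)\asymp n^{-1}(|x|+1/n)^\alpha$ from the model weight to the given measure $\mu$ on a fixed neighbourhood of $x_0$, via a localization argument with fast-decreasing polynomials and comparison with the model measure on a large interval. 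Those bounds make $|Q_n|\le C$ on a fixed interval; the iterated Bernstein inequality then yields $|Q_n^{(k)}(0)|\le CM^kn^k$, and the Taylor expansion $Q_n(\varepsilon_n)=1+\sum_{k\ge 1}Q_n^{(k)}(0)\varepsilon_n^k/k!=1+o(1)$ finishes the job.

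In your proposal the assertion that $P_{n-1}(x_0+a/n+\varepsilon_n)\to 1$ because ``its derivative there is controlled'' is not justified. To get $|P_{n-1}'|=O(n)$ near $x_0$ via Bernstein you would need $|P_{n-1}|$ uniformly bounded on a \emph{fixed} interval around $x_0$; the only a~priori estimate is $|P_{n-1}(y)|^2\le K_n(y,y)/K_n(x_0+a/n,x_0+a/n)$, and turning that into an $O(1)$ bound on a fixed interval is exactly the Nevai-type estimate for $\mu$ that the paper proves and you omit. A Nikolskii bound from $\|P_{n-1}\|_{L^2(\mu)}$ alone does not give the required control near the singularity. Your fallback route through the model-case asymptotics~(\ref{model_case_bulk_Christoffel}) is circular: those asymptotics are for $|x|^\alpha dx$ on $[-1,1]$, not for $\mu$, and the sentence ``translating the base point by $o(1/n)$ changes the infimum only by $1+o(1)$'' is precisely the lemma you are trying to prove. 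What is missing is the polynomial structure of $K_n(y,y)$ together with the localization step that yields Nevai bounds for $\mu$ itself.
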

\begin{proof}
During the proof, constants are denoted with $ C $ and their value often varies from line to line. We can assume without the loss of generality that $ x_0 = 0 $. The classical bound of Nevai \cite[p. 120 Theorem 28]{Nevai_2} says that there is a constant $ C $ independent of $ x $ such that
\begin{equation}\label{Nevai_bound_bulk}
	\frac{1}{C n} \bigg( |x| + \frac{1}{n} \bigg)^{\alpha} \leq \lambda_n(\mu_{\alpha}^{b}, x) \leq \frac{C}{n} \bigg( |x| + \frac{1}{n} \bigg)^{\alpha}, \quad x \in (-1/2, 1/2)
\end{equation}
holds, where $ \mu_\alpha^b $ is defined by (\ref{mu_alpha_bulk}). We wish to establish the same bounds for $ \lambda_n(\mu, x) $. Let $ \delta > 0 $ be so small such that $ \delta < \varepsilon $ and
\[
	\frac{w(0)}{2} \leq w(x) \leq 2 w(0), \quad x \in (-\delta, \delta)
\]
holds. Suppose that $ P_n(x) $ is extremal for $ \lambda_n(\mu, x_1) $ for some $ x_1 \in (-\delta, \delta) $, i.e. $ P_n(x) $ is a polynomial of degree less than $ n $ with $ P_n(x_1) = 1 $ and $ \int |P_n|^2 d\mu = \lambda_n(\mu, x_1) $. Then
\[
	\lambda_n(\mu, x_1) \geq \frac{w(0)}{2} \int_{-\delta}^{\delta} |P_n(x)| |x|^\alpha dx \geq \frac{w(0)}{2} \lambda_n\big( |x|^\alpha \chi_{[-\delta, \delta]}(x) dx, x_1 \big),
\]
where $ \chi_H(x) $ denotes the characteristic function of the set $ H $. After scaling the measures appropriately, the bound (\ref{Nevai_bound_bulk}) can be applied for the quantity $ \lambda_n\big( |x|^\alpha \chi_{[-\delta, \delta]}(x) dx, x_1 \big) $, thus there is a constant $ C $ such that
\begin{equation}\label{Nevai_general_lower_bound_bulk}
	\lambda_n(\mu, x_1) \geq \frac{1}{C n} \bigg( |x_1| + \frac{1}{n} \bigg)^{\alpha}, \quad x_1 \in (-\delta/2, \delta/2).
\end{equation}
On the other hand, let $ b > 0 $ be so large such that $ K \subseteq [-b,b] $, let $ P_n(x) $ be extremal for $ \lambda_n\big( |x|^\alpha \chi_{[-b,b]}(x) dx, x_1 \big) $ and define the polynomial
\[
	R_n(x) = P_n(x) \bigg( 1 - \frac{(x-x_1)^2}{2b} \bigg)^{n}.
\]
$ R_n(x) $ is a polynomial of degree at most $ 3n $, moreover $ R_n(x_1) = 1 $ and $ |R_n(x)| \leq |P_n(x)| $ for all $ x \in [-b,b] $. Then
\begin{equation}\label{perturbation_upper_estimate_1}
\begin{aligned}
	\lambda_{3n}(\mu, x_1) & \leq \int |R_n(x)|^2 d\mu(x) \\
	& \leq 2 w(0) \int_{-\delta}^{\delta} |P_n(x)|^2 |x|^\alpha dx + \gamma^n \int_{K \setminus [-\delta, \delta]} |P_n(x)|^2 d\mu(x) \\
	& \leq 2 w(0) \lambda_n(|x|^\alpha \chi_{[-b,b]}(x)dx, x_1) + \mu(K) \gamma^n \| P_n \|_K,
\end{aligned}
\end{equation}
where
\[
	\gamma = \sup_{x \in K \setminus [-\delta, \delta]} \bigg| 1 - \frac{(x - x_1)^2}{2\operatorname{diam}(K)} \bigg|^n < 1.
\]
As the Erd\H{o}s-Tur\'an criterion implies, the measure $ |x|^\alpha \chi_{[-b,b]} dx $ is regular in the sense of Stahl-Totik moreover its support $ [-b,b] $ is regular with respect to the Dirichlet problem. Then (\ref{reg_dirichlet}) gives that for every $ \tau > 0 $,
\[
	\| P_n \|_{[-b,b]} \leq (1 + \tau)^n \lambda_{n}\big( |x|^\alpha \chi_{[-b,b]}(x)dx, x_1 \big)^{1/2}
\]
holds if $ n $ is large enough. Since $ \lambda_{n}\big( |x|^\alpha \chi_{[-b,b]}(x)dx, x_1 \big) = O(1) $ (use the constant polynomial $ 1 $ in the definition (\ref{Christoffel_definition})), then it follows that if $ \tau $ is chosen such that $ q = \gamma (1 + \tau) < 1 $, we have
\[
	\mu(K) \gamma^n \| P_n \|_K = O(q^n).
\]
This together with Lemma \ref{subsequence_lemma},  (\ref{Nevai_bound_bulk}) and (\ref{perturbation_upper_estimate_1}) implies that there is a constant $ C $ such that
\begin{equation}\label{Nevai_general_upper_bound_bulk}
	\lambda_{n}(\mu, x_1) \leq \frac{C}{n} \bigg( |x_1| + \frac{1}{n} \bigg)^\alpha, x_1 \in (-\delta/2, \delta/2).
\end{equation}
Now define the polynomial $ Q_n(x) $ as
\[
	Q_n(x) = \frac{\lambda_n(\mu, a/n)}{\lambda_n(\mu, a/n + x)}.
\]
$ Q_n(x) $ is indeed a polynomial of degree $ 2n - 2 $ as implied by (\ref{Christoffel_polynomial_form}), moreover $ Q_n(0) = 1 $. (\ref{Nevai_general_lower_bound_bulk}) and (\ref{Nevai_general_upper_bound_bulk}) gives that
\begin{equation}\label{perturbation_lemma_P_bound_1}
	|Q_n(x)| \leq C \bigg( \frac{|a/n| + 1/n}{|a/n + x| + 1/n} \bigg)^{\alpha} \leq C, \quad x \in [-\delta/4, \delta/4]
\end{equation}
that is $ Q_n(x) $ is bounded on the small but fixed interval $ [-\delta/4, \delta/4] $, moreover the bound holds uniformly for $ a $ in compact subsets of the real line. The iterated Bernstein inequality for $ [-\delta/4, \delta/4] $, see \cite[p. 260 Exercise 5e]{Borwein-Erdelyi}, gives that
\begin{equation}\label{lambda_polynomial_bound}
	|Q_n^{(k)}(0)| \leq C M^k n^k
\end{equation}
holds for some constants $ C $ and $ M $.
Overall, since $ \varepsilon_n = o(n^{-1}) $, we have
\begin{equation}\label{perturbation_lemma_P_bound_2}
	|Q_n(\varepsilon_n)| \leq \sum_{k=0}^{2n-2} \frac{|Q_n^{(k)}(0)|}{k!} \varepsilon_n^k \leq 1 + C \sum_{k=1}^{2n-2} \frac{M^k n^k \varepsilon_n^k}{k!} \leq 1 + o(1),
\end{equation}
and since (\ref{perturbation_lemma_P_bound_1}) holds uniformly for $ a $ in compact subsets of the real line, the above bound is also uniform. This implies
\[
	\limsup_{n \to \infty} \frac{\lambda_n(\mu, a/n)}{\lambda_n(a/n + \varepsilon_n)} \leq 1,
\]
which is half of what we need. To obtain the matching estimate
\[
	\limsup_{n \to \infty} \frac{\lambda_n(a/n + \varepsilon_n)}{\lambda_n(\mu, a/n)} \leq 1,
\]
define the polynomial
\[
	Q_n(x) = \frac{\lambda_n(a/n + \varepsilon_n)}{\lambda_n(\mu, a/n + \varepsilon_n + x)}
\]
and repeat the argument given in (\ref{perturbation_lemma_P_bound_1}) - (\ref{perturbation_lemma_P_bound_2}) to see that $ |Q_n(-\varepsilon_n)| \leq 1 + o(1) $ holds.
\end{proof}

The analogue of the previous lemma for the edge of the support is the following.

\begin{lemma}\label{perturbation_lemma_edge}
Let $ \mu $ be a finite Borel measure and suppose that $ \mu $ is supported on a compact set $ K = \operatorname{supp}(\mu) $ on the real line. Let $ x_0 \in K $ be a right endpoint of $ K $ (i.e. $ K \cap (x_0, x_0 + \varepsilon_0) = \varnothing $ for some $ \varepsilon_0 > 0 $) and assume that for some $ \varepsilon > 0 $ the measure $ \mu $ is absolutely continuous on $ (x_0 - \varepsilon, x_0] $ with
\[
	d\mu(x) = w(x) |x - x_0|^\alpha dx, \quad x \in (x_0 - \varepsilon, x_0]
\]
there for some $ \alpha > -1 $, where $ w $ is strictly positive and left continuous at $ x_0 $. Then for a given sequence $ \varepsilon_n = o(n^{-2}) $, 
\[
	\lim_{n \to \infty} \frac{\lambda_n(\mu, x_0 - a/n^2)}{\lambda_n(\mu, x_0 - a/n^2 + \varepsilon_n)} = 1
\]
holds for all $ a \in [0, \infty) $.
\end{lemma}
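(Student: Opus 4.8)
The plan is to transpose the proof of Lemma~\ref{perturbation_lemma} to the hard edge: every estimate there that was organized around an interior point has a one-sided, endpoint analogue, and the only structural change is that the polynomial-derivative bound near an endpoint is of order $n^{2}$ rather than $n$, which is exactly why the admissible perturbation shrinks from $o(n^{-1})$ to $o(n^{-2})$.

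First I would record the classical two-sided Christoffel estimate of Nevai \cite{Nevai_2} at the edge: there is a constant $C$ with
\[
	\frac{1}{Cn^{2}}\Big(|1-x|+\tfrac{1}{n^{2}}\Big)^{\alpha}\le\lambda_n(\mu_\alpha^e,x)\le\frac{C}{n^{2}}\Big(|1-x|+\tfrac{1}{n^{2}}\Big)^{\alpha}
\]
for $x$ in a fixed left-neighbourhood of the endpoint $1$ of $\operatorname{supp}(\mu_\alpha^e)$ (when $|1-x|=O(n^{-2})$ this is also contained in (\ref{model_case_edge_Christoffel})). I then transfer this to $\lambda_n(\mu,\cdot)$ near $x_0$ exactly as in Lemma~\ref{perturbation_lemma}: choose $\delta<\varepsilon$ with $w(x_0)/2\le w\le 2w(x_0)$ on $(x_0-\delta,x_0]$; for the lower bound restrict the extremal integral to $(x_0-\delta,x_0]$, where $\mu$ dominates a constant multiple of an affinely rescaled copy of $\mu_\alpha^e$; for the upper bound use the multiplier polynomial $R_n(x)=P_n(x)\big(1-\tfrac{(x-x_1)^{2}}{2\operatorname{diam}(K)}\big)^{n}$ together with the Stahl--Totik regularity of $|x-x_0|^{\alpha}\chi_{[c,x_0]}(x)\,dx$ (Erd\H{o}s--Tur\'an), the Dirichlet-regularity of $[c,x_0]$ and (\ref{reg_dirichlet}), and then Lemma~\ref{subsequence_lemma} to pass from $3n$ back to $n$. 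This produces
\[
	\frac{1}{Cn^{2}}\Big(|x-x_0|+\tfrac{1}{n^{2}}\Big)^{\alpha}\le\lambda_n(\mu,x)\le\frac{C}{n^{2}}\Big(|x-x_0|+\tfrac{1}{n^{2}}\Big)^{\alpha},\qquad x\in(x_0-\delta/2,x_0].
\]

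Next, fix $a\ge0$, put $x_1=x_0-a/n^{2}$ and consider
\[
	Q_n(x)=\frac{\lambda_n(\mu,x_1)}{\lambda_n(\mu,x_1+x)},
\]
which by (\ref{Christoffel_polynomial_form}) is a polynomial of degree $2n-2$ with $Q_n(0)=1$. For $n$ large, the point $x_1+x$ ranges over the fixed left-neighbourhood $(x_0-\delta/8,x_0]$ of $x_0$ as $x$ runs over an interval $I_n$ of length $\delta/8$ with $0\in I_n$, and on $I_n$ the bounds just established give
\[
	|Q_n(x)|\le C\Big(\frac{(a+1)/n^{2}}{|a/n^{2}-x|+1/n^{2}}\Big)^{\alpha}\le C,
\]
so $Q_n$ is bounded by a constant independent of $n$ there. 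Applying V.~A.~Markov's inequality in its iterated form on $I_n$ (cf.\ \cite{Borwein-Erdelyi}) to the degree-$(2n-2)$ polynomial $Q_n$ yields $|Q_n^{(k)}(0)|\le CM^{k}n^{2k}$ for suitable constants $C,M$, whence, since $n^{2}\varepsilon_n\to0$,
\[
	\Big|Q_n(\varepsilon_n)-1\Big|=\Bigg|\sum_{k=1}^{2n-2}\frac{Q_n^{(k)}(0)}{k!}\,\varepsilon_n^{k}\Bigg|\le C\sum_{k=1}^{\infty}\frac{(M n^{2}|\varepsilon_n|)^{k}}{k!}=o(1),
\]
that is, $\lambda_n(\mu,x_0-a/n^{2})/\lambda_n(\mu,x_0-a/n^{2}+\varepsilon_n)\to1$. (As in Lemma~\ref{perturbation_lemma} one may instead bound $|Q_n(\varepsilon_n)|\le 1+o(1)$ and the analogous quantity with $x_1$ replaced by $x_1+\varepsilon_n$ separately.)

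The main obstacle is the third step: unlike the interior situation of Lemma~\ref{perturbation_lemma}, at the endpoint the relevant polynomial-derivative bound near the evaluation point $x_1=x_0-a/n^{2}$, which sits at distance $\sim n^{-2}$ from $x_0$, is of order $n^{2}$, so the interior Bernstein inequality must be replaced by a Markov-type inequality; what makes the scheme go through is that the iterated Markov inequality still carries the factorial denominator $1/k!$, so that the Taylor series of $Q_n$ at $0$ can be summed against $\varepsilon_n^{k}$. Tracking the powers of $n$ shows that $o(n^{-2})$ is exactly the largest perturbation this argument tolerates, which is why the statement is confined to shifts of size $n^{-2}$; moreover the one-sided Nevai bounds above are only asserted in a fixed left-neighbourhood of $x_0$, so uniformity in $a$ is not claimed and it suffices to argue for each fixed $a\in[0,\infty)$.
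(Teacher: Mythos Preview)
Your overall strategy matches the paper's, but two steps need correction.

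First, the Nevai bound you quote at the edge is not the right one. Nevai's estimate for $\mu_\alpha^e$ near $x=1$ reads
\[
\frac{1}{Cn}\Big(\sqrt{1-x}+\tfrac{1}{n}\Big)^{2\alpha+1}\le\lambda_n(\mu_\alpha^e,x)\le\frac{C}{n}\Big(\sqrt{1-x}+\tfrac{1}{n}\Big)^{2\alpha+1},
\]
which is the form the paper uses. Your expression $\frac{1}{n^{2}}\big(|1-x|+n^{-2}\big)^{\alpha}$ agrees with this only when $|1-x|=O(n^{-2})$; on the fixed-length interval $I_n$ on which you bound $Q_n$, the two are not comparable (they differ by a factor $\asymp n\sqrt{1-x}$). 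So the transferred two-sided estimate you write for $\lambda_n(\mu,\cdot)$ on $(x_0-\delta/2,x_0]$ is incorrect as stated, and the computation $|Q_n(x)|\le C\big((a+1)/(\,n^{2}|a/n^{2}-x|+1)\big)^{\alpha}$ that follows from it is not justified. With the correct Nevai bound the argument proceeds exactly as in the paper.

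Second, the upper-bound transfer is not ``exactly as in Lemma~\ref{perturbation_lemma}''. The hypothesis is only that $x_0$ is a \emph{local} right endpoint; $K$ may well have components to the right of $x_0+\varepsilon_1$. Your comparison measure $|x-x_0|^{\alpha}\chi_{[c,x_0]}dx$ lives on $[c,x_0]$, so Stahl--Totik regularity controls $\|P_n\|_{[c,x_0]}$ but not $\|P_n\|_K$. The paper handles this by invoking the Bernstein--Walsh lemma to pass from $\|P_n\|_{[-b,1]}$ to $\|P_n\|_{[-b,b]}\ge\|P_n\|_K$, at the cost of an extra factor $c^{n}$; to absorb that factor it replaces your exponent $n$ in the damping polynomial by $kn$ for a suitably large integer $k$, so that $\gamma^{n}c^{n}(1+\tau)^{n}<1$. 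Without this adjustment the residual integral over $K\setminus[x_0-\delta,x_0]$ is not shown to be small.

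Apart from these two points your outline coincides with the paper's proof: the Markov (rather than Bernstein) inequality is indeed the right tool here, and the Taylor-series estimate for $Q_n(\varepsilon_n)$ goes through once $|Q_n^{(k)}(0)|\le CM^{k}n^{2k}$ is in hand.
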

\begin{proof}
The proof follows in a similar tune to Lemma \ref{perturbation_lemma}, with a few differences. Without the loss of generality we can assume that $ x_0 = 1 $. Since $ \varepsilon_n = o(n^{-2}) $, we can also assume that $ |\varepsilon_n| \leq a/n^2 $. Let $ \delta > 0 $ be so small such that
\[
	\frac{w(1)}{2} \leq w(x) \leq 2 w(1), \quad x \in (1 - \delta, 1]
\]
holds and $ K \cap (1,1+\delta) = \varnothing $. The classical bound of Nevai \cite[p. 120 Theorem 28]{Nevai_2} once more says that there is a constant $ C $ independent of $ x $ such that
\[
	\frac{1}{C n} \bigg( \sqrt{1 - x} + \frac{1}{n} \bigg)^{2\alpha + 1} \leq \lambda_n(\mu_\alpha^e, x) \leq \frac{C}{n} \bigg( \sqrt{1 - x} + \frac{1}{n} \bigg)^{2\alpha + 1}, \quad x \in (1/2, 1]
\]
holds.  Similarly like in the proof of Lemma \ref{perturbation_lemma}, we shall show that this holds if we replace $ \mu_\alpha^e $ with $ \mu $. The proof of the lower estimate
\[
	\frac{1}{C n} \bigg( \sqrt{1 - x} + \frac{1}{n} \bigg)^{2\alpha + 1} \leq \lambda_n(\mu, x), \quad x \in (1-\delta,1]
\]
goes through verbatim as in the proof of Lemma \ref{perturbation_lemma}, though the upper estimate is slightly different. Let $ b > 0 $ be so large such that $ K \subseteq [-b, b] $ and let $ P_n(x) $ be extremal for $ \lambda_n(|x-1|^\alpha \chi_{[-b,1]}(x)dx, x_1) $. Define the polynomial $ R_n(x) $ as
\[
	R_n(x) = P_n(x) \bigg( 1 - \frac{(x-x_1)^{2}}{2b} \bigg)^{k n},
\]
where $ k $ is an integer yet to be determined. The degree of $ R_n $ is at most $ (2k+1)n $ and $ R_n(x_1) = 1 $. Now we have
\begin{align*}
	\lambda_{(2k+1)n}(\mu, x_1) & \leq \int |R_n(x)|^2 d\mu(x) \\
	& \leq 2w(1) \int_{1-\delta}^{1} |P_n(x)|^2 |x - 1|^\alpha dx  + \int_{K \setminus [1-\delta, 1]} |R_n(x)|^2 d\mu(x).
\end{align*}
On one hand, the extremality of $ P_n $ implies that
\[
	\int_{1-\delta}^{1} |P_n(x)|^2 |x - 1|^\alpha dx \leq \lambda_n(|x-1|^\alpha \chi_{[-b,1]}(x)dx, x_1).
\]
On the other hand, since
\[
	\sup_{x \in K \setminus [1-\delta, 1]} \bigg| 1 - \frac{(x-x_1)^{2}}{2b} \bigg|^{kn} \leq \gamma^n
\]
for some $ |\gamma| < 1 $ depending on $ k $, we have
\[
	\int_{K \setminus [1-\delta, 1]} |R_n(x)|^2 d\mu(x) \leq \mu(K) \gamma^n \| P_n \|_{[-b,b]}.
\]
Since the measure $ |x-1|^\alpha \chi_{[-b,1]}(x) dx $ is regular in the sense of Stahl-Totik, (\ref{reg_dirichlet}) implies that for all $ \tau > 0 $
\[
	\| P_n \|_{[-b,1]} \leq (1 + \tau)^n \lambda_n(|x-1|^\alpha \chi_{[-b,1]}(x)dx, x_1)^{1/2}
\]
holds if $ n $ is large enough. In addition, the Bernstein-Walsh lemma, see \cite[Theorem 5.5.7a]{Ransford}, gives that
\[
	\| P_n \|_{[-b,b]} \leq c^n \| P_n \|_{[-b,1]}
\]
holds for some, possibly very large constant $ c $. Overall, we have
\[
	\int_{K \setminus [1-\delta, 1]} |R_n(x)|^2 d\mu(x) \leq \mu(K) \gamma^n c^n (1 + \tau)^n \lambda_n(|x-1|^\alpha \chi_{[-b,1]}(x)dx, x_1)^{1/2}.
\]
If the integer $ k $ in the definition of $ R_n(x) $ is selected such that $ \gamma c (1 + \tau) < 1 $ holds (recall that $ \gamma $ depends on $ k $), the above integral is small, that is,
\[
	\int_{K \setminus [1-\delta, 1]} |R_n(x)|^2 d\mu(x) = O(q^n)
\]
for some $ |q| < 1 $. These estimates give that
\[
	\lambda_{(2k+1)n}(\mu, x_1) \leq C \lambda_n(|x-1|^\alpha \chi_{[-b,1]}(x)dx, x_1),
\]
where is $ C $ is a fixed constant. Now Lemma \ref{subsequence_lemma} yields
\[
	\lambda_n(\mu, x) \leq \frac{C}{n} \bigg( \sqrt{1 - x} + \frac{1}{n} \bigg)^{2\alpha + 1}, \quad x \in (1 - \delta, 1]
\]
for some possibly different constant $ C $. Overall, we have
\begin{equation}\label{Nevai_general_bound_edge}
	\frac{1}{Cn} \bigg( \sqrt{1 - x} + \frac{1}{n} \bigg)^{2\alpha + 1} \leq \lambda_n(\mu, x) \leq \frac{C}{n} \bigg( \sqrt{1 - x} + \frac{1}{n} \bigg)^{2\alpha + 1}, \quad x \in (1 - \delta, 1].
\end{equation}
Now define the polynomial $ Q_n(x) $ as
\[
	Q_n(x) = \frac{\lambda_n(\mu, 1 - a/n^2)}{\lambda_n(\mu, 1 - a/n^2 - x)}.
\]
The bound (\ref{Nevai_general_bound_edge}) gives that
\[
	|Q_n(x)| \leq C, \quad x \in [-a/n^2, \delta/2],
\]
holds uniformly for $ a $ in compact subsets of $ [0, \infty) $. The classical Markov inequality for $ [-a/n^2, \delta/2] $, see \cite[Chapter 4, Theorem 1.4]{DeVore-Lorentz}, implies that
\[
	|Q_n^{(k)}(x)| \leq C M^k n^{2k}, \quad x \in [-a/n^2, \delta/2],
\]
holds for some constants $ C $ and $ M $. Now we have
\[
	|Q_n(-\varepsilon_n)| \leq \sum_{k=0}^{2n-2} \frac{|Q_n^{(k)}(0)|}{k!} \varepsilon_n^k \leq 1 + C \sum_{k=1}^{2n-2} \frac{M^k n^{2k} \varepsilon_n^k}{k!} = 1 + o(1),
\]
which yields
\[
	\limsup_{n \to \infty} \frac{\lambda_n(\mu, 1 - a/n^2)}{\lambda_n(\mu, 1 - a/n^2 + \varepsilon_n)} \leq 1.
\]
To obtain the matching bound
\[
	\limsup_{n \to \infty} \frac{\lambda_n(\mu, 1 - a/n^2 + \varepsilon_n)}{\lambda_n(\mu, 1 - a/n^2)} \leq 1,
\]
define the polynomial
\[
	Q_n(x) = \frac{\lambda_n(\mu, 1 - a/n^2 + \varepsilon_n)}{\lambda_n(\mu, 1 - a/n^2 + \varepsilon_n - x)}
\]
and repeat the same argument as above to see that $ |Q_n(\varepsilon_n)| \leq 1 + o(1) $.
\end{proof}

\subsection{Proof of Theorem \ref{main_theorem_bulk}}

Throughout this section, let $ K $ be a compact set and let $ x_0 \in \operatorname{K} $ be an element in its interior. Let $ \mu $ be a measure with $ \operatorname{supp}(\mu) = K $ and suppose that $ \mu $ is absolutely continuous in a small neighbourhood of $ x_0 $ with
\[
	d\mu(x) = w(x) |x - x_0|^\alpha dx
\]
there, where $ \alpha > -1 $ and $ w(x) $ is strictly positive and continuous in $ x_0 $. 

Before we study the asymptotics of Christoffel functions with respect to $ \mu $, we prove a general lemma about the approximating properties of polynomial inverse images. Although the following lemma is known, we could not find an explicit reference for this, and its proof is scattered throughout the literature, therefore we have found it easier to include it here.

\begin{lemma}\label{bulk_approximation}
Let $ K $ be a compact set. Suppose that $ x_0 \in \operatorname{int}(K) $ is a point in its interior and let $ \varepsilon > 0 $ and $ \eta > 0 $ be arbitrary. There exists a set $ E_N = \cup_{k=0}^{N-1} [a_k,b_k] $, $ b_k \leq a_{k+1} $ such that \\
(a) $ E_N = T_{N}^{-1}([-1,1]) $, where $ T_N $ is an admissible polynomial of degree $ N $ with $ T_N(x_0) = 0 $ and $ T_N^{\prime}(x_0) > 0 $, \\
(b) $ K \subseteq E_N $, \\
(c) $ \operatorname{dist}(K, E_N) < \varepsilon $, where $ \operatorname{dist}(K, E_N) $ denotes the Hausdorff distance of $ K $ and $ E_N $,\\
(d) $ \frac{1}{1 + \eta} \omega_K(x_0) \leq \omega_{E_N}(x_0) \leq \omega_K(x_0) $, where $ \omega_S(x) $ denotes the equilibrium density of a set $ S $. \\
Moreover, we have
\begin{equation}\label{inverse_image_equilibrium}
	\omega_{E_N}(x_0) = \frac{|T_{N}^{\prime}(x_0)|}{N \pi}.
\end{equation}
\end{lemma}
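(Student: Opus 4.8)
The plan is to build $E_N$ via Totik's polynomial inverse image construction, after first enclosing $K$ in a slightly larger finite union of intervals. Note first that the displayed identity (\ref{inverse_image_equilibrium}) is immediate once $T_N$ is found: it is just (\ref{inverse_image_equilibrium_general}) evaluated at $x_0$, where $T_N(x_0)=0$ makes $\sqrt{1-T_N(x_0)^2}=1$. Likewise, if $T_N$ is admissible with $T_N(x_0)=0$, then necessarily $T_N'(x_0)\neq 0$ (otherwise $x_0$ would be a critical point with $|T_N(x_0)|=0<1$), so after possibly replacing $T_N$ by $-T_N$ --- which alters neither $T_N^{-1}([-1,1])$, nor admissibility, nor the value $T_N(x_0)$ --- we may assume $T_N'(x_0)>0$, which gives (a). Two potential-theoretic facts about the equilibrium density at an interior point (see \cite{Ransford,Saff-Totik}) will do the rest of the work: if $S_1\subseteq S_2$ are compact and $x_0\in\operatorname{int}(S_1)$, then $\omega_{S_2}(x_0)\le\omega_{S_1}(x_0)$ (indeed $g_{\mathbb{C}\setminus S_2}\le g_{\mathbb{C}\setminus S_1}$, both Green's functions vanish on a fixed real segment about $x_0$ and are positive and harmonic in the slit disk above and below it, so their one-sided normal derivatives at $x_0$, whose sum is $2\pi\,\omega(x_0)$, are monotone in the set); and if $S_N\supseteq K$ with $\operatorname{dist}(S_N,K)\to 0$ and $x_0\in\operatorname{int}(K)$, then $\omega_{S_N}(x_0)\to\omega_K(x_0)$, because $g_{\mathbb{C}\setminus S_N}\uparrow g_{\mathbb{C}\setminus K}$ with no capacity lost near $x_0$ (all sets contain a fixed segment there), locally uniformly in the slit disk at $x_0$, so the normal derivatives converge. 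In particular $\omega_K(x_0)$ is a well-defined positive number, $K\subseteq E_N$ already yields the upper bound in (d), and the lower bound in (d) will follow as soon as $E_N$ is close enough to $K$.

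Next I would reduce to a finite union of intervals: since $K$ is compact and $x_0\in\operatorname{int}(K)$, the closed $\varepsilon'$-neighbourhood $L$ of $K$ is, for small $\varepsilon'$, a finite union of intervals (each component meets $K$, hence has length $\ge 2\varepsilon'$) with $K\subseteq L$, $\operatorname{dist}(K,L)\le\varepsilon'$, and $x_0$ lying in the interior of some component $[c,d]$ of $L$. By the previous paragraph, $\omega_L(x_0)\to\omega_K(x_0)$ as $\varepsilon'\to 0$, so we fix $\varepsilon'$ small enough that $L$, and $\omega_L(x_0)$, are within whatever tolerance we shall need. It then suffices to approximate $L$ from outside by a polynomial inverse image whose defining polynomial has a zero at $x_0$.

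For this last step I would follow the construction of Totik \cite{Totik_2} (see also \cite{Totik_4} and \cite{Danka-Totik}). Writing $L=\bigcup_j[c_j,d_j]$ with $x_0\in\operatorname{int}([c_{j_0},d_{j_0}])$, recall that a set $T_N^{-1}([-1,1])$ with $T_N$ admissible of degree $N$ is characterised, among finite unions of intervals, by the condition that each of its bands (maximal intervals of monotonicity of $T_N$) carries equilibrium mass exactly $1/N$; equivalently the zeros of $T_N$ lie at the $\omega_{E_N}$-quantiles $(2k-1)/(2N)$ and the band endpoints at the quantiles $k/N$. Accordingly, I would enlarge the endpoints of the components of $L$ by amounts $O(1/N)$, always outward so that $L\subseteq E_N$ is preserved, chosen so that simultaneously (i) every resulting component has equilibrium mass an integer multiple of $1/N$, and (ii) the equilibrium mass of the part of the $j_0$-th component lying to the left of $x_0$ equals $(2k_0-1)/(2N)$ for a suitable integer $k_0$ --- so that $x_0$ is one of the listed quantiles and hence a zero of the corresponding $T_N$. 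Conditions (i)--(ii) form a finite system on the (finitely many) endpoints whose targets are spaced $O(1/N)$ apart; since the equilibrium masses depend smoothly and monotonically on the endpoints, for large $N$ this system has a solution within $O(1/N)$ of the endpoints of $L$, by continuity (an implicit-function argument). The resulting $E_N=T_N^{-1}([-1,1])$ satisfies $L\subseteq E_N$, $\operatorname{dist}(L,E_N)=O(1/N)$, and $T_N(x_0)=0$. Assembling: for $N$ large we get $K\subseteq L\subseteq E_N$ (this is (b)), $\operatorname{dist}(K,E_N)\le\operatorname{dist}(K,L)+\operatorname{dist}(L,E_N)<\varepsilon$ (this is (c)), $T_N'(x_0)>0$ after negating if needed (this is (a)), the upper bound $\omega_{E_N}(x_0)\le\omega_K(x_0)$ from inclusion, and --- $E_N$ now being as close to $K$ as we like --- the lower bound $\omega_{E_N}(x_0)\ge\frac{1}{1+\eta}\omega_K(x_0)$ from the second potential-theoretic fact; together these give (d), and (\ref{inverse_image_equilibrium}) has already been observed.

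The main obstacle is the construction in the third paragraph: placing a zero of $T_N$ \emph{exactly} at $x_0$ while keeping both $L\subseteq E_N$ and $\operatorname{dist}(L,E_N)=O(1/N)$. The "rationalise the band masses" mechanism is the standard part; coupling it with the extra quantile condition at $x_0$ --- and verifying that the required outward perturbations near $x_0$ are compatible with that condition --- is the delicate point where one must lean on Totik's technique. A secondary point deserving care is the continuity statement for equilibrium densities: Hausdorff convergence by itself does not force $\omega_{S_N}(x_0)\to\omega_K(x_0)$, and one uses essentially that every set involved contains a fixed real segment about $x_0$, so that near $x_0$ the Green's functions, and hence their normal derivatives, converge locally uniformly.
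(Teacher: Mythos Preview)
Your overall strategy matches the paper's: enclose $K$ in a finite union of intervals, approximate that by a polynomial inverse image via Totik's theorem, control the equilibrium density by monotonicity and continuity, and read off (\ref{inverse_image_equilibrium}) from (\ref{inverse_image_equilibrium_general}). The potential-theoretic facts you invoke are exactly what the paper gets by citing \cite[Lemma 4.2]{Totik_3}, and your reduction to a finite union of intervals (via the $\varepsilon'$-neighbourhood) is a valid alternative to the paper's device of deleting only finitely many complementary gaps of $K$.

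The substantive difference is in how you force $T_N(x_0)=0$. You attempt to build this condition directly into the construction of $E_N$, adjusting the band endpoints so that $x_0$ lands at an equilibrium quantile $(2k_0-1)/(2N)$, and you rightly flag this as the delicate step: coupling the extra quantile constraint at $x_0$ with the rationality constraints on the component masses, while keeping all perturbations outward, is not obviously solvable, and your implicit-function sketch does not settle it. The paper sidesteps this entirely with a much lighter trick: once $E_N=T_N^{-1}([-1,1])$ is obtained from \cite[Theorem 2.1]{Totik_2} (with a little room to spare around $K$), replace $T_N$ by $\mathcal{T}_n\circ T_N$ for a Chebyshev polynomial $\mathcal{T}_n$. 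Since $\mathcal{T}_n^{-1}([-1,1])=[-1,1]$, this leaves $E_N$ unchanged but multiplies the degree and plants zeros at the $T_N$-preimages of the Chebyshev nodes, which become arbitrarily dense in $E_N$ as $n$ grows. A shift smaller than the distance from one of these zeros to $x_0$ then places a zero exactly at $x_0$, and because the initial $E_N$ was chosen with margin, the shifted set still contains $K$ and stays within $\varepsilon$ of it. Your quantile approach could in principle be made rigorous, but the Chebyshev-plus-shift argument avoids the system of constraints altogether and is the cleaner route here.
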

\begin{proof}
Since $ K $ is a compact subset, its complement can be obtained as
\[
	\mathbb{R} \setminus K = (-\infty, a^*) \cup (b^*, \infty) \cup \Big( \bigcup_{k=0}^{\infty} (a_k^*, b_k^*) \Big),
\]
where $ a^* \leq a_k^* $ and $ b_k^* \leq b^* $ for all $ k \in \{1, 2, \dots \} $. Hence the set
\[
	F_m = \mathbb{R} \setminus \Big( (-\infty, a^*) \cup (b^*, \infty) \cup \Big( \bigcup_{k=0}^{m-1} (a_k^*, b_k^*) \Big) \Big)
\]
is a finite union of intervals. If $ m $ is large enough, $ \operatorname{dist}(K, F_m) \leq \varepsilon/2 $ and, as \cite{Totik_3} Lemma 4.2 implies,
\[
	\bigg( \frac{1}{1 + \eta} \bigg)^{1/2} \omega_{K}(x_0) \leq \omega_{F_m}(x_0) \leq \omega_K(x_0)
\]
holds. Now \cite[Theorem 2.1]{Totik_2} gives an admissible polynomial $ T_N $ and a set $ E_N = T_N^{-1}([-1,1]) = \cup_{k=0}^{N-1} [a_k, b_k] $ such that the endpoints of $ E_N $ are arbitrarily close to the endpoints of $ F_m $. Using Chebyshev polynomials $ \mathcal{T}_n(x) $ and replacing $ T_N(x) $ with $ T_N(\mathcal{T}_n(x)) $ and introducing a very small shift if necessary it can be achieved that $ T_N(x_0) = 0 $. (For details on this idea, see \cite{Totik_4}.) By multiplying with $ (-1) $ if necessary, it can also be achieved that $ T_N^{\prime}(x_0) > 0 $, therefore the conditions (a)-(c) are satisfied. If the endpoints of $ E_N $ are close enough to the endpoints of $ F_m $, then \cite[Lemma 4.2]{Totik_3} again implies that $ E_N $ satisfies the condition (d). The identity (\ref{inverse_image_equilibrium}) is a direct consequence of (\ref{inverse_image_equilibrium_general}).
\end{proof}

To prove (\ref{general_bulk}), it is enough to show that the upper and lower estimates
\[
	\limsup_{n \to \infty} \lambda_n \bigg(\mu, x_0 + \frac{a}{n} \bigg) \leq \frac{w(x_0)}{(\pi \omega_K(x_0))^{\alpha + 1}} \Big( \mathbb{L}_{\alpha}^{*}\big(\pi \omega_K(x_0) a \big) \Big)^{-1}
\]
and
\[
	\frac{w(x_0)}{(\pi \omega_K(x_0))^{\alpha + 1}} \Big( \mathbb{L}_{\alpha}^{*}\big(\pi \omega_K(x_0) a \big) \Big)^{-1} \leq \liminf_{n \to \infty} \lambda_n \bigg(\mu, x_0 + \frac{a}{n} \bigg)
\]
hold. \\

\textbf{Upper estimate.} Let $ \eta > 0 $ be arbitrary and let $ E_N = \cup_{k=0}^{N-1} [a_k, b_k] = T_N^{-1}([-1,1]) $ and $ T_N $ be the approximating set and the matching admissible polynomial granted by Lemma \ref{bulk_approximation}. (For the purpose of the upper estimate, $ \varepsilon > 0 $ in Lemma \ref{bulk_approximation} can be chosen arbitrarily. However, this will not be the case for the lower estimate.) It can be assumed without the loss of generality that $ x_0 \in (a_0, b_0) $ and $ T_N^{\prime}(x_0) > 0 $. Select a $ \delta > 0 $ so small such that
\begin{equation}\label{general_upper_separation}
\begin{aligned}
	(1) & \quad \frac{1}{1 + \eta} w(x) \leq w(x_0) \leq (1 + \eta) w(x), \\
	(2) & \quad \frac{1}{1 + \eta} |T_N(x)| \leq |T_{N}^{\prime}(x_0)| |x - x_0| \leq (1 + \eta) |T_N(x)|, \\
	(3) & \quad \frac{1}{1 + \eta} |T_N^{\prime}(x)| \leq |T_{N}^{\prime}(x_0)| \leq (1 + \eta) |T_{N}^{\prime}(x)|. \\
\end{aligned}
\end{equation}
holds for all $ [x_0 - \delta, x_0 + \delta] $. (This can be achieved since $ w $ is continuous and $ T_N $ is continuously differentiable at $ x_0 $.) Let $ \xi \in \mathbb{R} $ be arbitrary and let $ x_0 + \xi_n $ be the unique element of $ [a_0, b_0] $ such that $ T_N(x_0 + \xi_n) = \xi/n $. Since $ T_N $ is a polynomial, $ \xi_n = O(n^{-1}) $ and
\[
	\frac{\xi}{n} = T_N(x_0 + \xi_n) = T_N(x_0) + T_N^{\prime}(x_0)\xi_n + O(n^{-2}) = T_N^{\prime}(x_0)\xi_n + O(n^{-2}),
\]
which implies
\begin{equation}\label{general_upper_alpha_n}
	\xi_n = \frac{\xi}{|T_N^{\prime}(x_0)|n} + o(n^{-1}).
\end{equation}
Assume that $ P_n $ is extremal for $ \lambda_n(\mu_\alpha^b, \xi/n) $ and define
\[
	R_n(x) = P_n(T_N(x)) S_{n, x_0 + \xi_n, K}(x),
\]
where $ S_{n, x_0 + \xi_n, K}(x) $ is defined by
\begin{equation}\label{fdp}
	S_{n,x_0 + \xi_n, K}(x) = \bigg( 1 - \Big(\frac{x_0 + \xi_n - x}{\operatorname{diam}(K)}\Big)^2 \bigg)^{\lfloor \eta n \rfloor}
\end{equation}
(The reason why we choose an arbitrary $ \xi \in \mathbb{R} $ instead of the $ a $ appearing in (\ref{general_bulk}) will become apparent at the end of our calculations, where it will be clear that some scaling is necessary.) This way $ R_n $ is a polynomial of degree less than $ nN + 2\lfloor \eta n \rfloor $ with $ R_n(x_0 + \xi_n) = 1 $.
Now we have
\begin{align*}
	\lambda_{nN + 2\lfloor \eta n \rfloor}(\mu, x_0 + \xi_n) & \leq \int |R_n(x)|^2 d\mu(x) \\
	& = \int_{x_0 - \delta}^{x_0 + \delta} |R_n(x)|^2 w(x) |x - x_0|^{\alpha} dx \\
	& \phantom{aaaaaa} + \int_{K \setminus [x_0 - \delta, x_0 + \delta]} |R_n(x)|^2 d\mu(x).
\end{align*}
On one hand, (\ref{lemniscate_integral}) and (\ref{general_upper_separation}) gives
\begin{align*}
	\int_{x_0 - \delta}^{x_0 + \delta} & |R_n(x)|^2 w(x) |x - x_0|^{\alpha} dx \\
	& \leq \int_{x_0 - \delta}^{x_0 + \delta} |P_n(T_N(x))|^2 w(x) |x - x_0|^{\alpha} dx \\
	& = \int_{x_0 - \delta}^{x_0 + \delta} |P_n(T_N(x))|^2 \frac{|T_N^{\prime}(x_0)|^{\alpha + 1}}{|T_N^{\prime}(x_0)|^{\alpha + 1}} w(x) |x - x_0|^{\alpha} dx \\
	& \leq (1 + \eta)^{\alpha + 2} \frac{w(x_0)}{|T_N^{\prime}(x_0)|^{\alpha + 1}} \int_{x_0 - \delta}^{x_0 + \delta} |P_N(T_N(x))|^2 |T_N^{\prime}(x)| |T_N(x)|^{\alpha} dx \\
	& \leq (1 + \eta)^{\alpha + 2} \frac{w(x_0)}{|T_N^{\prime}(x_0)|^{\alpha + 1}} \int_{a_0}^{b_0} |P_N(T_N(x))|^2 |T_N^{\prime}(x)| |T_N(x)|^{\alpha} dx \\
	& = (1 + \eta)^{\alpha + 2} \frac{w(x_0)}{|T_N^{\prime}(x_0)|^{\alpha + 1}} \int_{-1}^{1}|P_n(x)|^2 |x|^{\alpha} dx \\
	& = (1 + \eta)^{\alpha + 2} \frac{w(x_0)}{|T_N^{\prime}(x_0)|^{\alpha + 1}} \lambda_n(\mu_\alpha^b, \xi/n).
\end{align*}
On the other hand, as implied by the Erd\H{o}s-Tur\'an criterion, $ \mu_\alpha^b $ is regular in the sense of Stahl-Totik, hence for every $ \tau > 0 $
\[
	\| P_n \|_{[-1,1]} \leq (1 + \tau)^n \| P_n \|_{L^2(\mu_\alpha^b)} \leq C (1 + \tau)^n
\]
holds for all large $ n $, where we used the extremality of $ P_n $ with respect to $ \lambda_n(\mu_\alpha^b, \xi/n) $ and (\ref{model_case_bulk_Christoffel}). The polynomial $ S_{n, x_0 + \xi_n, K}(x) $ defined by (\ref{fdp}) is decreasing exponentially fast, that is 
\[
	\| S_{n, x_0 + \xi_n, K}\|_{K \setminus [x_0 - \delta, x_0 + \delta]} \leq \gamma^n
\]
holds for some $ |\gamma| < 1 $ if $ n $ is large enough. If $ \tau $ is selected so small that $ q = (1 + \tau) \gamma < 1 $, then
\[
	\int_{K \setminus [x_0 - \delta, x_0 + \delta]} |R_n(x)|^2 d\mu(x) = O(q^n),
\]
that is, this residual integral is also decreasing exponentially fast. Combining these estimates, it follows that
\[
	\lambda_{nN + 2\lfloor \eta n \rfloor}(\mu, x_0 + \xi_n) \leq O(q^n) + (1 + \eta)^{\alpha + 2} \frac{w(x_0)}{|T_N^{\prime}(x_0)|^{\alpha + 1}} \lambda_n(\mu_\alpha^b, \xi/n).
\]
This is almost what we need. Since
\begin{equation}\label{general_upper_subsequence_and_perturbation_1}
\begin{aligned}
	\frac{\xi}{|T_N^{\prime}(x_0)|n} & = \frac{\xi N}{|T_N^{\prime}(x_0)| (nN + 2\lfloor \eta n \rfloor)} (1 + 2\eta/N) + \frac{2\xi}{|T_N^{\prime}(x_0)|} \frac{\frac{\lfloor \eta n \rfloor}{n} - \eta}{nN + 2\lfloor \eta n \rfloor} \\
	& = \frac{\xi N}{|T_N^{\prime}(x_0)| (nN + 2\lfloor \eta n \rfloor)} (1 + 2\eta/N) + o(n^{-1}),
\end{aligned}
\end{equation}
it follows from Lemma \ref{perturbation_lemma}, (\ref{general_upper_alpha_n}) and (\ref{general_upper_subsequence_and_perturbation_1}) that
\begin{equation}
\begin{aligned}
	\limsup_{n \to \infty} & (nN + 2\lfloor \eta n \rfloor)^{\alpha + 1} \lambda_{nN + 2\lfloor \eta n \rfloor}(\mu, x_0 + \xi_n) \\
	& = \limsup_{n \to \infty} (nN + 2\lfloor \eta n \rfloor)^{\alpha + 1} \\
	& \phantom{aaaaaaaaaaa} \times \lambda_{nN + 2\lfloor \eta n \rfloor}\bigg(\mu, x_0 + \frac{\xi N(1 + 2\eta/N)}{|T_N^{\prime}(x_0)| (nN + 2\lfloor \eta n \rfloor)} \bigg).
\end{aligned}
\end{equation}
If $ k $ is selected such that $ nN + 2\lfloor \eta n \rfloor \leq k \leq (n + 1)N + 2\lfloor \eta (n + 1) \rfloor $, we have
\begin{equation}
\begin{aligned}
	\bigg( & \frac{nN + 2\lfloor \eta n \rfloor}{k} \bigg)^{\alpha + 1}  k^{\alpha + 1} \lambda_k \bigg( \mu, x_0 + \frac{\xi N(1 + 2\eta/N)}{|T_N^{\prime}(x_0)| (nN + 2\lfloor \eta n \rfloor)} \bigg) \\
	& \leq (nN + 2\lfloor \eta n \rfloor)^{\alpha + 1} \lambda_{nN + 2\lfloor \eta n \rfloor} \bigg( \mu, x_0 + \frac{\xi N(1 + 2\eta/N)}{|T_N^{\prime}(x_0)| (nN + 2\lfloor \eta n \rfloor)} \bigg).
\end{aligned}
\end{equation}
Since $ (nN + 2\lfloor \eta n \rfloor)/k = 1 + o(1) $, these estimates, along with Lemma \ref{perturbation_lemma}, (\ref{model_case_bulk_Christoffel}) and (\ref{inverse_image_equilibrium}) imply
\begin{equation}
\begin{aligned}
	\limsup_{k \to \infty} & k^{\alpha + 1} \lambda_k \bigg( \mu, x_0 + \frac{\xi N(1 + 2\eta/N)}{|T_N^{\prime}(x_0)|k} \bigg) \\
	& \leq \limsup_{k \to \infty} (nN + 2\lfloor \eta n \rfloor)^{\alpha + 1} \lambda_{nN + 2\lfloor \eta n \rfloor}(\mu, x_0 + \xi_n) \\
	& \leq \limsup_{k \to \infty} \bigg(1 + \frac{2\lfloor \eta n \rfloor}{n N} \bigg)^{\alpha + 1} (1 + \eta)^{\alpha + 2} \frac{w(x_0) N^{\alpha + 1}}{|T_N^{\prime}(x_0)|^{\alpha + 1}} n^{\alpha + 1}\lambda_n(\mu_\alpha^b, \xi/n) \\
	& = (1 + 2\eta/N)^{\alpha + 1} (1 + \eta)^{\alpha + 2} \frac{w(x_0)}{(\pi \omega_{E_N}(x_0))^{\alpha + 1}} \Big( \mathbb{L}_{\alpha}^{*}(\xi) \Big)^{-1},
\end{aligned}
\end{equation}
which, by selecting $ a = \frac{\xi (1 + 2\eta/N)}{\pi \omega_{E_n}(x_0)} $, gives
\begin{equation}
\begin{aligned}
	\limsup_{k \to \infty} & k^{\alpha + 1} \lambda_k \bigg( \mu, x_0 + \frac{a}{k} \bigg) \\
	& \leq (1 + 2\eta/N)^{\alpha + 1} (1 + \eta)^{\alpha + 2} \frac{w(x_0)}{(\pi \omega_{E_N}(x_0))^{\alpha + 1}} \bigg( \mathbb{L}_{\alpha}^{*}\bigg( \frac{\pi \omega_{E_N}(x_0) a}{1 + 2\eta/N} \bigg) \bigg)^{-1}.
\end{aligned}
\end{equation}
Since $ \eta $ was arbitrary and $ \mathbb{L}_{\alpha}^{*}(\cdot) $ is continuous, we have
\begin{equation}
	\limsup_{k \to \infty} k^{\alpha + 1} \lambda_k \bigg( \mu, x_0 + \frac{a}{k} \bigg)  \leq \frac{w(x_0)}{(\pi \omega_{E_N}(x_0))^{\alpha + 1}} \Big( \mathbb{L}_{\alpha}^{*}\big(\pi \omega_{E_N}(x_0) a \big) \Big)^{-1}.
\end{equation}
The approximating set $ E_N $ was selected such that $ \omega_{E_N}(x_0) $ is arbitrarily close to $ \omega_K(x_0) $, therefore this gives us the desired upper estimate
\begin{equation}\label{general_upper_estimate}
	\limsup_{k \to \infty} k^{\alpha + 1} \lambda_k \bigg( \mu, x_0 + \frac{a}{k} \bigg)  \leq \frac{w(x_0)}{(\pi \omega_{K}(x_0))^{\alpha + 1}} \Big( \mathbb{L}_{\alpha}^{*}\big(\pi \omega_K(x_0) a \big) \Big)^{-1}.
\end{equation}
Note that since (\ref{model_case_bulk_Christoffel}) is uniform for $ a $ in compact subsets of the real line, this upper estimate is also uniform. \\

\textbf{Lower estimate for sets regular with respect to the Dirichlet problem.} For the upper estimate the Stahl-Totik regularity of $ \mu $ was not used. However, it will be needed for the lower estimate, therefore we prove it first for sets regular with respect to the Dirichlet problem to reduce technical difficulties. If a set is such, the Stahl-Totik regularity for a measure supported there gives us the uniform estimate (\ref{reg_dirichlet}). Therefore assume that $ K $ is regular with respect to the Dirichlet problem. Let $ \eta > 0 $ be arbitrary but fixed, moreover let $ \delta_1 > 0 $ so small such that
\begin{equation}\label{general_lower_dirichlet_separation_1}
	(1) \quad \frac{1}{1 + \eta} w(x_0) \leq w(x) \leq (1 + \eta) w(x_0)
\end{equation}
holds for all $ x \in [x_0 - \delta_1, x_0 + \delta_1] $. Now let $ E_N = \cup_{k=0}^{N-1} [a_k,b_k] $ be the approximating set for $ K $ and $ T_N $ be the matching admissible polynomial given by Lemma \ref{bulk_approximation}. We can assume without the loss of generality that $ x_0 \in (a_0, b_0) $. At the moment, the $ \varepsilon $ which controls the distance of $ E_N $ and $ K $ is arbitrary, but soon we'll select this parameter according to our purpose. Assume that $ P_n $ is extremal for $ \lambda_n(\mu, x_0 + a/n) $. Let
\[
	R_n(x) = P_n(x) S_{n, x_0 + a/n, E}(x),
\]
where $ S_{n,x_0 + a/n, E}(x) $ is defined similarly as in (\ref{fdp}), i.e. let $ E = [-m,m] $ be an interval so large such that $ K \subseteq [-m/2, m/2] $ and for an arbitrary $ \eta > 0 $ define
\[
	S_{n, x_0 + a/n, E}(x) = \bigg( 1 - \Big( \frac{x_0 + a/n - x}{2m}\Big)^2 \bigg)^{\lfloor \eta n \rfloor}.
\]
The large interval $ E = [-m,m] $ is needed to avoid dependence of $ S_{n, x_0+a/n, E} $ on the approximating set $ E_N $. We only need $ S_{n, x_0 + a/n, E} $ to be fast decreasing on $ E_N $, but we also want to make sure that the rate of decrease does not depend on $ E_N $, because actually the set $ E_N $ will be choosen to fit the rate of decrease of $ S_{n, x_0 + a/n, E}(x) $. 

Because $ \mu $ is regular in the sense of Stahl-Totik, (\ref{reg_dirichlet}) gives that for arbitrary $ \tau > 0 $,
\[
	\| P_n \|_{K} \leq (1+\tau)^n \| P_n \|_{L^2(\mu)}
\]
holds if $ n $ is large enough. Now the Bernstein-Walsh lemma, see \cite[Theorem 5.5.7a]{Ransford}, says that if $ E_N $ is selected accordingly (that is, the Hausdorff distance $ \operatorname{dist}(E_N,K) $ is small enough), we have
\[
	\| P_n \|_{E_N} \leq (1 + \tau)^n \| P_n \|_K
\]
Overall, since $ \sup_{x \in E_N \setminus [x_0 - \delta_1, x_0 + \delta_1]} |S_{n, x_0 + a/n, E}(x)| \leq \gamma^n $ for some $ \gamma < 1 $,
\begin{equation}\label{general_bulk_lower_R_n_1}
	\|R_n\|_{E_N \setminus [x_0 - \delta_1, x_0 + \delta_1]} \leq (1 + \tau)^{2n} \gamma^n \| P_n \|_{L^2(\mu)} \leq (1 + \tau)^{2n} \gamma^n
\end{equation}
holds, where in the final step we used the extremality of $ P_n $. Now select $ \tau $ such that $ q = (1 + \tau)^2 \gamma < 1  $. Note that this means fixing $ E_N $, because small $ \tau $ can be achieved if $ \operatorname{dist}(E_N, K) $ is small enough in Lemma \ref{bulk_approximation}.

Let $ \delta_2 > 0 $ be so small such that $ \delta_2 < \delta_1 $, moreover $ [x_0 - \delta_2, x_0 + \delta_2] \subseteq [a_0, b_0] $ and
\begin{equation}\label{general_lower_dirichlet_separation_2}
\begin{aligned}
	(2) & \quad \frac{1}{1 + \eta} |T_N^{\prime}(x_0)| \leq |T_N^{\prime}(x)| \leq (1 + \eta) |T_N^{\prime}(x_0)|, \\
	(3) & \quad \frac{1}{1 + \eta} |T_N^{\prime}(x_0)||x - x_0| \leq |T_N(x)| \leq (1 + \eta) |T_N^{\prime}(x_0)||x - x_0|
\end{aligned}
\end{equation}
holds for all $ x \in [x_0 - \delta_2, x_0 + \delta_2] $.  Since $ w(x)|x-x_0|^\alpha $ is bounded from above and below on the intervals $ [x_0 - \delta_1, x_0 - \delta_2] $ and $ [x_0 + \delta_2, x_0 + \delta_1] $, Nikolskii's inequality can be used, see \cite[Chapter 4, Theorem 2.6]{DeVore-Lorentz}, which gives
\begin{align*}
	\| P_n \|_{[x_0 - \delta_1, x_0 + \delta_1] \setminus [x_0 - \delta_2, x_0 + \delta_2]} & \leq C n \| P_n \|_{L^2(\mu)} \leq C n,
\end{align*}
for some constant $ C $, where again the extremality of $ P_n $ was used.  It follows that we have
\begin{equation}\label{general_bulk_lower_R_n_2}
	\| R_n \|_{[x_0 - \delta_1, x_0 + \delta_1] \setminus [x_0 - \delta_2, x_0 + \delta_2]} \leq C r^n n
\end{equation}
for some $ |r| < 1 $, which is dependent on $ E_N $ through $ \delta_2 $. Inside the interval $ [x_0 - \delta_2, x_0 + \delta_2] $, the Nikolskii-type inequality \cite[Lemma 2.7]{Danka-Totik} for generalized Jacobi weights can be used to obtain
\begin{equation}\label{general_bulk_lower_R_n_3}
	\| R_n \|_{[x_0 - \delta_2, x_0 + \delta_2]} \leq C n^{\max \{ 1/2, (1+\alpha)/2 \}}.
\end{equation}
For arbitrary $ y \in E_N $ we introduce the notation 
\[
	\{ y_0, y_1, \dots, y_{N-1} \} = T_N^{-1}(T_N(y)).
\]
It can be assumed without the loss of generality that $ y_k \in [a_k, b_k] $. Define
\[
	R_{n}^{*}(y) = \sum_{k=0}^{N-1} R_n(y_k).
\]
$ R_n^*$ is a polynomial of degree less than $ n + 2\lfloor \eta n \rfloor $ and there exists a polynomial $ V_n $ of degree at most $ (n + 2\lfloor \eta n \rfloor)/N $ such that 
\begin{equation}\label{R_n_star_as_polynomial_of_T_N}
	R_n^*(y) = V_n(T_N(y)).
\end{equation}
The proof of this fact can be found, for example in \cite[Section 5]{Totik_4}. For $ R_n^* $ it is also true that
\begin{equation}\label{general_lower_R_n}
\begin{aligned}
	|R_n^*(y)|^2 & = |R_n(y)|^2 + O(q^n), \quad y \in [x_0 - \delta_2, x_0 + \delta_2] \\
	|R_n^*(y)|^2 & = O(q^n), \quad y \in [a_0, b_0] \setminus [x_0 - \delta_2, x_0 + \delta_2]
\end{aligned}
\end{equation}
holds for some $ |q| < 1 $. Indeed, in general, we have
\begin{equation}\label{general_lower_R_n_absolute_value_triangle}
	|R_n^*(y)|^2 \leq \sum_{l=0}^{N-1} \sum_{k=0}^{N-1} |R_n(y_l)| |R_n(y_k)|.
\end{equation}
Because among the values $ \{ y_0, y_1, \dots, y_{N-1} \} = T_N^{-1}(T_N(y)) $ only $ y = y_0 $ is contained in $ [a_0,b_0] $, the estimates (\ref{general_bulk_lower_R_n_1}) and (\ref{general_bulk_lower_R_n_2}) gives that all terms $ |R_n(y_l)| $ with the possible exception of $ |R_n(y_0)| $ are exponentially small, which gives (\ref{general_lower_R_n}).

Now we can proceed to estimate the Christoffel functions. Let $ \alpha_n $ be the unique element in $ [-1,1] $ such that $ T_N(x_0 + a/n) = \alpha_n $. Using the Taylor expansion of $ T_N(x) $, it is clear that
\[
	\alpha_n = T_{N}^{\prime}(x_0) \frac{a}{n} + o(n^{-1}).
\]
For the polynomial $ V_n(x) $ defined in (\ref{R_n_star_as_polynomial_of_T_N}), according to (\ref{general_lower_R_n})  we have $ V_n(\alpha_n) = 1 + o(1) $, which implies 
\begin{align*}
	(1 + o(1)) \lambda_{\deg(V_n)}&(\mu_\alpha^b, \alpha_n) \leq \int_{-1}^{1} |V_n(x)|^2 |x|^\alpha dx \\
	& = \int_{a_0}^{b_0} |V_n(T_N(x))|^2 |T_N(x)|^\alpha |T_N^{\prime}(x)| dx \\
	& = \int_{x_0 - \delta_2}^{x_0 + \delta_2} |R_n^*(x)|^2 |T_N(x)|^\alpha |T_N^{\prime}(x)| dx \\
	& \phantom{aaaaa} + \bigg( \int_{a_0}^{x_0 - \delta_2} + \int_{x_0 + \delta_2}^{b_0} \bigg) |R_n^*(x)|^2 |T_N(x)|^\alpha |T_N^{\prime}(x)| dx.
\end{align*}
On one hand, using (\ref{general_lower_dirichlet_separation_1}), (\ref{general_lower_dirichlet_separation_2}) and (\ref{general_lower_R_n}) we have
\begin{align*}
	\int_{x_0 - \delta_2}^{x_0 + \delta_2} & |R_n^*(x)|^2 |T_N(x)|^\alpha |T_N^{\prime}(x)| dx \\
	& \leq (1 + \eta)^{\alpha + 2} \frac{|T_N^{\prime}(x_0)|^{\alpha + 1}}{w(x_0)} \int_{x_0 - \delta_2}^{x_0 + \delta_2} \big( O(q^n) + |R_n(x)|^2 \big) w(x) |x - x_0|^\alpha dx \\
	& \leq O(q^n) + (1 + \eta)^{\alpha + 2} \frac{|T_N^{\prime}(x_0)|^{\alpha + 1}}{w(x_0)} \int_{x_0 - \delta_2}^{x_0 + \delta_2} |P_n(x)|^2 w(x) |x - x_0|^\alpha dx  \\
	& \leq O(q^n) + (1 + \eta)^{\alpha + 2} \frac{|T_N^{\prime}(x_0)|^{\alpha + 1}}{w(x_0)} \lambda_n(\mu, x_0 + a/n).
\end{align*}
On the other hand, (\ref{general_lower_R_n}) also implies that
\[
	\bigg( \int_{a_0}^{x_0 - \delta_2} + \int_{x_0 + \delta_2}^{b_0} \bigg) |R_n^*(x)|^2 |T_N(x)|^\alpha |T_N^{\prime}(x)| dx = O(q^n),
\]
therefore the combination of these two estimates gives
\[
	(1 + o(1)) \lambda_{\deg(V_n)}(\mu_\alpha^b, \alpha_n) \leq O(q^n) + (1 + \eta)^{\alpha + 2} \frac{|T_N^{\prime}(x_0)|^{\alpha + 1}}{w(x_0)} \lambda_n(\mu, x_0 + a/n).
\]
Similarly as in (\ref{general_upper_subsequence_and_perturbation_1}) - (\ref{general_upper_estimate}), this implies the lower estimate
\[
	\frac{w(x_0)}{(\pi \omega_K(x_0))^{\alpha + 1}} \Big( \mathbb{L}_{\alpha}^{*}\big(\pi \omega_K(x_0) a \big) \Big)^{-1} \leq \liminf_{n \to \infty} \lambda_n \bigg(\mu, x_0 + \frac{a}{n} \bigg),
\]
which holds if $ K $ is regular with respect to the Dirichlet problem. Note again that since (\ref{model_case_bulk_Christoffel}) is uniform for $ a $ in compact subsets of the real line, this upper estimate is also uniform. \\

\textbf{Lower estimate for general sets.} Now we omit the assumption that $ \mathbb{C} \setminus K $ is regular with respect to the Dirichlet problem. To overcome the problem caused by this, we apply an idea from \cite{Totik_3}. For every $ \tau > 0 $ and $ m \in \mathbb{N} $ define the the set
\[
	K_{\tau, m} = \bigg\{ x \in K: \sup_{\deg(Q_n) = n} \frac{|Q_n(x)|}{\| Q_n \|_{L^2(\mu)}} \leq (1 + \tau)^n, n \geq m \bigg\}.
\]
$ F_{m, \tau} $ is compact, $ F_{m, \tau} \subset F_{m + 1, \tau} $, moreover, since $ \mu $ is regular in the sense of Stahl-Totik, we have $ \cup_{m=1}^{\infty} F_{m, \tau} = K \setminus H $, where $ H $ is a set of zero logarithmic capacity. Let $ \theta > 0 $ be arbitrary and choose $ m $ so large such that $ \operatorname{cap}(F_{m, \tau}) > \operatorname{cap}(K) - \theta/2 $. Ancona's theorem says, see \cite{Ancona}, that there is a set $ K_\theta^{*} \subseteq F_{m, \tau} $ such that $ K_{\theta}^{*} $ is regular with respect to the Dirichlet problem and 
\[
\operatorname{cap}(K_{m}^{*}) > \operatorname{cap}(F_{m, \tau}) - \theta/2 > \operatorname{cap}(K) - \theta
\]
holds. Define $ K_\theta = K_\theta^{*} \cup [x_0 - \varepsilon, x_0 + \varepsilon] $, where $ \varepsilon > 0 $ is so small such that $ \mu $ is absolutely continuous there and $ K_\theta \subseteq K $. Now $ K_\theta $ is regular with respect to the Dirichlet problem, and due to the construction of $ K_\theta $,
\[
	\frac{\| Q_n \|_{K_\theta}}{\| Q_n \|_{L^2(\mu)}} \leq (1 + \tau)^{\deg(Q_n)}
\]
holds for an arbitrary sequence of nonzero polynomials $ \{ Q_n \}_{n=1}^{\infty} $ if $ n $ is large enough. From this point, proceeding similarly as in the case of sets regular with respect to the Dirichlet problem, we obtain
\[
	\frac{w(x_0)}{(\pi \omega_{K_\theta}(x_0))^{\alpha + 1}} \Big( \mathbb{L}_{\alpha}^{*}\big(\pi \omega_{K_\theta}(x_0) a \big) \Big)^{-1} \leq \liminf_{n \to \infty} \lambda_n \bigg(\mu, x_0 + \frac{a}{n} \bigg).
\]
Since \cite[Lemma 4.2]{Totik_3} implies that $ \omega_{K_\theta}(x_0) \to \omega_K(x_0) $ as $ \theta \to 0 $, and since $ \theta > 0 $ was arbitrary, the desired lower estimate
\begin{equation}\label{general_lower_estimate}
	\frac{w(x_0)}{(\pi \omega_K(x_0))^{\alpha + 1}} \Big( \mathbb{L}_{\alpha}^{*}\big(\pi \omega_K(x_0) a \big) \Big)^{-1} \leq \liminf_{n \to \infty} \lambda_n \bigg(\mu, x_0 + \frac{a}{n} \bigg)
\end{equation}
follows. (\ref{general_upper_estimate}) and (\ref{general_lower_estimate}) gives (\ref{general_bulk}), which completes the proof of Theorem \ref{main_theorem_bulk}.
\begin{flushright}$ \Box $ \end{flushright}

\subsection{Proof of Theorem \ref{main_theorem_edge}}

Now let $ K $ be a compact subset of the real line and suppose that $ x_0 \in K $ is a right endpoint, i.e. there is an $ \varepsilon_1 > 0 $ such that $ K \cap (x_0, x_0 + \varepsilon_1) = \varnothing $. Let $ \mu $ be a measure with $ \operatorname{supp}(\mu) = K $ and suppose that $ \mu $ is absolutely continuous in $ (x_0 - \varepsilon_0, x_0] $ for some $ \varepsilon_0 > 0 $ and
\[
	d\mu(x) = w(x)|x - x_0|^\alpha dx, \quad x \in (x_0 - \varepsilon_0, x_0]
\]
there, where $ \alpha > -1 $ and $ w(x) $ is strictly positive and left-continuous in $ x_0 $. When $ x_0 $ is a right endpoint, the density of the equilibrium measure is undefined there, but a related quantity takes its place instead. The behavior of the equilibrium density $ \omega_K(x) $ at an endpoint of $ K $ can be quantified as
\[
	M(K, x_0) = \lim_{x \to x_0 -} \sqrt{2} \pi |x - x_0|^{1/2} \omega_K(x).
\]
This quantity is finite and well defined in our case. (The constant $ \sqrt{2} \pi $ is usually not incorporated in the definition of $ M(K, x_0) $, but we have found it more convenient to do so.) It has appeared several times in the literature, for example it was shown by Totik that this is the asymptotically best possible constant in Markov inequalities for polynomials in several intervals, see \cite[Theorem 4.1]{Totik_2}. The analogue of Lemma \ref{bulk_approximation} is the following.

\begin{lemma}\label{edge_approximation}
Let $ K $ be a compact subset of the real line and let $ x_0 \in K $ be a point such that $ K \cap (x_0, x_0 + \varepsilon_1) = \varnothing $ and $ [x_0 - \varepsilon_1, x_0] \subseteq K $ for some $ \varepsilon_1 > 0 $. Let $ \varepsilon > 0 $ and $ \eta > 0 $ be arbitrary. There exists a set $ E_N = \cup_{k=0}^{N-1} [a_k, b_k] $, $ b_k \leq a_{k+1} $ such that \\
(a) $ E_N = T_{N}^{-1}([-1,1]) $, where $ T_N $ is an admissible polynomial of degree $ N $, $ x_0 $ is a right endpoint of $ E_N $ with $ T_N(x_0) = 1 $ and $ T_N^{\prime}(x_0) > 0 $, \\
(b) $ K \subseteq E_N $, \\
(c) $ \operatorname{dist}(K, E_N) < \varepsilon $, where $ \operatorname{dist}(K, E_N) $ denotes the Hausdorff distance of $ K $ and $ E_N $, \\
(d) $ \frac{1}{1 + \eta} M(K, x_0) \leq M(E_N, x_0) \leq M(K, x_0) $. \\
Moreover, we have
\begin{equation}\label{M_as_T_N_derivative}
	|T_N^{\prime}(x_0)| = N^2 M(E_N, x_0)^2.
\end{equation}
\end{lemma}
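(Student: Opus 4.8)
The proof runs parallel to that of Lemma \ref{bulk_approximation}, with $ x_0 $ now playing the role of an endpoint rather than an interior point. First I would decompose the complement of $ K $ as
\[
	\mathbb{R} \setminus K = (-\infty, a^*) \cup (b^*, \infty) \cup \Big( \bigcup_{k=1}^{\infty} (a_k^*, b_k^*) \Big),
\]
and observe that, since $ K \cap (x_0, x_0 + \varepsilon_1) = \varnothing $ and $ [x_0 - \varepsilon_1, x_0] \subseteq K $, the point $ x_0 $ is a right endpoint of a non-degenerate component of $ K $, the complementary interval adjacent to $ x_0 $ on the right being one of the intervals listed above. Filling in all but finitely many of the bounded complementary gaps, while always keeping the gap immediately to the right of $ x_0 $ unfilled, produces a finite union of intervals $ F_m \supseteq K $ for which $ x_0 $ is still a right endpoint of a non-degenerate interval. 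Choosing $ m $ large enough we get $ \operatorname{dist}(K, F_m) \leq \varepsilon/2 $, and \cite[Lemma 4.2]{Totik_3}, which controls $ M(\cdot, x_0) $ under such approximations exactly as it controls $ \omega_K(x_0) $ in the bulk setting, gives $ (1+\eta)^{-1/2} M(K, x_0) \leq M(F_m, x_0) \leq M(K, x_0) $.

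Next I would apply \cite[Theorem 2.1]{Totik_2} to the finite union of intervals $ F_m $: it produces an admissible polynomial $ T_N $ of degree $ N $ with $ E_N := T_N^{-1}([-1,1]) \supseteq F_m $ whose endpoints are arbitrarily close to those of $ F_m $, and after the small normalizing modifications described in \cite{Totik_4} (which are harmless since $ \varepsilon > 0 $ is at our disposal) one of the right endpoints of $ E_N $ can be taken to be exactly $ x_0 $, so that $ T_N(x_0) \in \{ -1, 1 \} $. Because $ x_0 $ is a right endpoint of $ E_N $, we have $ T_N(x) \leq 1 $ and $ T_N(x) \geq -1 $ for $ x $ slightly less than $ x_0 $, so $ T_N(x_0) = 1 $ forces $ T_N'(x_0) \geq 0 $ while $ T_N(x_0) = -1 $ forces $ T_N'(x_0) \leq 0 $; replacing $ T_N $ by $ -T_N $ if necessary (which leaves $ E_N $ unchanged as a set and preserves admissibility and the degree), together with a negligible perturbation if $ T_N'(x_0) = 0 $, achieves $ T_N(x_0) = 1 $ and $ T_N'(x_0) > 0 $. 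This establishes (a), while (b) and (c) follow from $ K \subseteq F_m \subseteq E_N $ and $ \operatorname{dist}(E_N, F_m) \leq \varepsilon/2 $, and (d) follows by applying \cite[Lemma 4.2]{Totik_3} once more to the pair $ F_m \subseteq E_N $ and combining it with the estimate for $ F_m $ obtained above, exactly as in Lemma \ref{bulk_approximation}.

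Finally, the identity (\ref{M_as_T_N_derivative}) follows directly from (\ref{inverse_image_equilibrium_general}) and the definition (\ref{M_equilibrium}). Since $ T_N(x_0) = 1 $ and $ T_N'(x_0) > 0 $, for $ x \to x_0 - $ the Taylor expansion of $ T_N $ gives $ 1 - T_N(x) = T_N'(x_0)(x_0 - x) + O\big((x_0 - x)^2\big) $ and $ 1 + T_N(x) = 2 + o(1) $, whence
\[
	\sqrt{1 - T_N(x)^2} = \sqrt{2 T_N'(x_0)}\,|x - x_0|^{1/2}\,(1 + o(1)).
\]
Substituting this and $ T_N'(x) = T_N'(x_0) + o(1) $ into $ \omega_{E_N}(x) = |T_N'(x)| / \big( N\pi \sqrt{1 - T_N(x)^2} \big) $ yields
\[
	\sqrt{2}\,\pi\,|x - x_0|^{1/2}\,\omega_{E_N}(x) \longrightarrow \frac{\sqrt{T_N'(x_0)}}{N} \quad (x \to x_0 -),
\]
so $ M(E_N, x_0) = \sqrt{T_N'(x_0)}/N $, which is exactly (\ref{M_as_T_N_derivative}). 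The main obstacle is the bookkeeping in the second step: arranging that $ x_0 $ remains a genuine right endpoint of $ E_N $ with $ T_N(x_0) = 1 $ and the correct orientation (handled by the possible sign change $ T_N \mapsto -T_N $), while propagating the two-sided control of $ M(\cdot, x_0) $ through the successive approximations of $ K $ by $ F_m $ and then by $ E_N $, all the while keeping $ K \subseteq E_N $; the underlying continuity of $ M(\cdot, x_0) $ under these approximations is the essential input, and it is supplied by \cite[Lemma 4.2]{Totik_3}.
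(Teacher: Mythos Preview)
Your proof is correct and follows essentially the same route as the paper: approximate $K$ by a finite union of intervals $F_m$ keeping $x_0$ as a right endpoint, then invoke \cite[Theorem 2.1]{Totik_2} to get $E_N$ with $x_0$ still a right endpoint, and use \cite[Lemma 4.2]{Totik_3} for (d). The only cosmetic difference is that the paper obtains (\ref{M_as_T_N_derivative}) by citing \cite[(4.10)]{Totik_2} directly, whereas you derive it from (\ref{inverse_image_equilibrium_general}) and the definition of $M(\cdot,x_0)$; your computation is correct and in fact makes the identity self-contained.
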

\begin{proof}
The proof of (a)-(d) is almost identical to the proof of Lemma \ref{bulk_approximation}, except where we select the set $ F_m = \cup_{k=0}^{m-1} [a_k^*, b_k^*] $ which is a finite union of intervals containing $ K $, we make sure that $ x_0 $ is a right endpoint of $ F_m $. Then we select $ E_N $ using \cite[Theorem 2.1]{Totik_2}, again in such a way that $ x_0 $ remains a right endpoint of $ E_N $. Since the convergence of $ \omega_{E_N}(x) $ is locally uniform as granted by \cite[Lemma 4.2]{Totik_3}, (d) follows. The formula (\ref{M_as_T_N_derivative}) is just formula (4.10) in \cite{Totik_2}.
\end{proof}

To show (\ref{general_edge}), we shall again prove matching upper and lower estimates. In order to avoid excessive repetition, we only discuss the upper estimate, with an emphasis on the differences. The lower estimate works similarly, aside from the same differences. \\

As in the bulk, let $ \eta > 0 $ be arbitrary and let $ E_N = \cup_{k=0}^{N-1} [a_k, b_k] = T_N^{-1}([-1,1]) $ and $ T_N $ be the approximating set and the matching admissible polynomial granted by Lemma \ref{edge_approximation}. We can assume without the loss of generality that $ x_0 = b_0 $. Select a $ \delta > 0 $ so small such that
\begin{equation}\label{edge_upper_separation}
\begin{aligned}
	(1) & \quad \frac{1}{1 + \eta} w(x_0) \leq w(x) \leq (1 + \eta) w(x_0), \\
	(2) & \quad \frac{1}{1 + \eta} |T_N(x_0) - 1| \leq |T_N^{\prime}(x)||x - x_0| \leq (1 + \eta) |T_N(x_0)|, \\
	(3) & \quad \frac{1}{1 + \eta} |T_N^{\prime}(x_0)| \leq |T_N^{\prime}(x)| \leq (1 + \eta) |T_N^{\prime}(x_0)| \\
\end{aligned}
\end{equation}
holds for all $ x \in [x_0 - \delta, x_0] $. Let $ \xi \in [0, \infty) $ be arbitrary and let $ x_0 - \xi_n $ be the unique element of $ [a_0, b_0] $ such that $ T_N(x_0 - \xi_n) = 1 - \xi/(2n^2) $. Since $ T_N $ is a polynomial, we have
\[
	1 - \frac{\xi}{2n^2} = T_N(x_0 - \xi_n) = 1 - T_N^{\prime}(x_0) \xi_n + o(n^{-2}),
\]
which implies
\[
	\xi_n = \frac{\xi}{|T_N^{\prime}(x_0)| 2 n^2} + o(n^{-2}).
\]
Assume that $ P_n $ is extremal for $ \lambda_n\big(\mu_\alpha^e, 1 - \frac{\xi}{2n^2}\big) $ and define
\[
	R_n(x) = P_n(T_N(x)) S_{n, x_0 - \xi_n, K}(x),
\]
where $ S_{n, x_0 - \xi_n, K}(x) $ is defined as
\[
	S_{n,x_0 + \xi_n, K}(x) = \bigg( 1 - \Big(\frac{x_0 - \xi_n - x}{\operatorname{diam}(K)}\Big)^2 \bigg)^{\lfloor \eta n \rfloor},
\]
as usual. Then $ R_n $ is a polynomial of degree less than $ nN + 2\lfloor \eta n \rfloor $ with $ R_n(x_0 - \xi_n) = 1 $. Then, similarly as before, (\ref{edge_upper_separation}) gives
\begin{align*}
	\lambda_{nN + 2\lfloor \eta n \rfloor}(\mu, x_0 - \xi_n) & \leq \int_{x_0 - \delta}^{x_0} |R_n(x)|^2 w(x) |x - x_0|^\alpha dx \\
	& \phantom{aaaaa} + \int_{K \setminus [x_0 - \delta, x_0]} |R_n(x)|^2 w(x) |x - x_0|^\alpha dx \\
	& \leq O(q^n) + \frac{(1 + \eta)^{\alpha + 1} w(x_0)}{|T_N^{\prime}(x_0)|^{\alpha + 1}} \lambda_n\bigg(\mu_\alpha^e, 1 - \frac{\xi}{2n^2}\bigg).
\end{align*}
Now the application of Lemma \ref{subsequence_lemma} and Lemma \ref{perturbation_lemma_edge} yields that
\begin{align*}
	\limsup_{n \to \infty} & (nN + 2\lfloor \eta n \rfloor)^{2\alpha + 2} \lambda_{nN + 2\lfloor \eta n \rfloor}(\mu, x_0 - \xi_n) \\
	& = \limsup_{k \to \infty} k^{2\alpha + 2} \lambda_k \bigg( \mu, x_0 - \frac{(1 + \eta/N)^2 \xi}{2 M(E_N, x_0)^2 k^2} \bigg).
\end{align*}
which, along with the previous estimate, by selecting $ a = \frac{(1 + 2\eta/N)^2 \xi}{M(E_N, x_0)^2} $ implies
\begin{align*}
	\limsup_{k \to \infty} & k^{2\alpha + 2} \lambda_k \bigg( \mu, x_0 - \frac{a}{2k^2} \bigg) \\
	& \leq \frac{(1 + \eta)^{\alpha + 1} (1 + 2\eta/N)^{2\alpha + 2} w(x_0)}{M(E_N, x_0)^{2\alpha + 2}} \Bigg( 2^{\alpha + 1} \mathbb{J}_\alpha^*\bigg(\frac{M(E_N, x_0)^2}{(1+2\eta/N)^2} a \bigg) \Bigg)^{-1}.
\end{align*}
Since $ \eta $ was arbitrary and $ E_N $ was choosen such that Lemma \ref{bulk_approximation} (d) holds, this implies the desired upper estimate
\begin{equation}\label{edge_upper_estimate}
	\limsup_{k \to \infty} k^{2\alpha + 2} \lambda_k \bigg( \mu, x_0 - \frac{a}{2k^2} \bigg) \leq \frac{w(x_0)}{M(K, x_0)^{2\alpha + 2}} \Big( 2^{\alpha + 1} \mathbb{J}_\alpha^*\big( M(K, x_0)^2 a\big) \Big)^{-1}.
\end{equation}
The lower estimate
\begin{equation}\label{edge_lower_estimate}
	\frac{w(x_0)}{M(K, x_0)^{2\alpha + 2}} \Big( 2^{\alpha + 1} \mathbb{J}_\alpha^*\big( M(K, x_0)^2 a\big) \Big)^{-1} \leq \liminf_{k \to \infty} k^{2\alpha + 2} \lambda_k \bigg( \mu, x_0 - \frac{a}{2k^2} \bigg)
\end{equation}
can be obtained as we did in Theorem \ref{main_theorem_bulk}, except of course with the same differences which also appeared at the upper estimate as well. Finally, (\ref{edge_upper_estimate}) and (\ref{edge_lower_estimate}) gives (\ref{general_edge}), and this is what we had to prove.
\begin{flushright}$ \Box $ \end{flushright}

\section{Universality limits}\label{section_universality}

Our aim in this section is to prove Theorems \ref{main_theorem_universality_bulk} and \ref{main_theorem_universality_edge}. Theorem \ref{main_theorem_universality_edge} is a direct corollary of Theorem \ref{main_theorem_edge} using the result \cite[Theorem 1.2]{Lubinsky_3}. To prove Theorem \ref{main_theorem_universality_bulk}, we employ the second method of Lubinsky which is based upon the theory of entire functions of exponential type. We say that an entire function $ g(z) $ is of \emph{order}$ \rho $ if
\[
	\rho = \limsup_{r \to \infty} \frac{\log\big( \log \big( \sup_{|z|=r} |g(z)| \big)\big)}{\log r}.
\]
An entire function of order $ 1 $ is said to be of the \emph{exponential type} $ \sigma $ if
\[
	\sigma = \limsup_{r \to \infty} \frac{\sup_{|z|=r} \log |g(z)|}{r}.
\]
If $ g(z) $ is of the exponential type, it belongs to the \emph{Cartwright class} if
\[
	\int_{-\infty}^{\infty} \frac{\log^+ |g(x)|}{1 + x^2} dx < \infty.
\]
A sequence of entire functions $ \{ g_n(z) \}_{n=1}^{\infty} $ is said to be \emph{normal}, if every subsequence contains a subsequence which converges uniformly on compact subsets of the complex plane. It is known, see \cite[Theorem 14.6]{BigRudin}, that if $ \{ g_n(z) \}_{n=1}^{\infty} $ is uniformly bounded on each compact subset of the complex plane, then it is normal. \\

During this section we follow the lines of \cite{Lubinsky_3}. First we develop reproducing identities for the kernel function $ \mathbb{L}_\alpha^* $, then we use the theory of entire functions of exponential type to deduce universality limits from Theorem \ref{main_theorem_bulk}.

\subsection{Reproducing kernel identities for $ \mathbb{L}_\alpha^* $}

\begin{theorem}\label{reproducing_kernel_identity}
Let $ g $ be an entire function of exponential type $ 1 $ and suppose that $ |x|^{\alpha/2} g(x) \in L^2(\mathbb{R}) $ for some $ \alpha > -1 $. Then
\begin{equation}\label{reproducing_identity_equation}
	g(z) = \int_{-\infty}^{\infty} g(s) \mathbb{L}_{\alpha}^{*}(z,s) |s|^\alpha ds
\end{equation}
holds for all $ z \in \mathbb{C} $.
\end{theorem}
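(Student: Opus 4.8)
The plan is to obtain \eqref{reproducing_identity_equation} as a rescaled limit of the reproducing property of the Christoffel--Darboux kernel of the model measure $\mu_\alpha^b$. Work in the Hilbert space $\mathcal H_\alpha=L^2(|s|^\alpha\,ds)$, for which the hypothesis says exactly that $g\in\mathcal H_\alpha$. By replacing $g$ with $z\mapsto g(\lambda z)$ for $0<\lambda<1$ and letting $\lambda\uparrow1$ only at the very end, I may assume $g$ has exponential type $\le 1-\delta$ for a fixed $\delta>0$; this reduction is legitimate because $g(\lambda\,\cdot)\to g$ in $\mathcal H_\alpha$ (strong continuity of the dilation group on $\mathcal H_\alpha$, via the usual reduction to compactly supported continuous functions) and because the right-hand side of \eqref{reproducing_identity_equation} is a bounded functional on $\mathcal H_\alpha$ once $\mathbb{L}_\alpha^*(z,\cdot)\in\mathcal H_\alpha$ is known (Step~3 below). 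Fix $a\in\mathbb R$, write $K_n$ for the $n$-th Christoffel--Darboux kernel of $\mu_\alpha^b$, and set
\[
	k_n^a(s)=\frac{1}{n^{\alpha+1}}K_n\Bigl(\frac{a}{n},\frac{s}{n}\Bigr)\chi_{[-n,n]}(s),\qquad f_n(s)=P_n\Bigl(\frac{s}{n}\Bigr)\chi_{[-n,n]}(s),
\]
where $P_n$ is a polynomial with $\deg P_n<n$ to be chosen. The substitution $y=s/n$ in the reproducing identity $P_n(x)=\int_{-1}^{1}K_n(x,y)P_n(y)|y|^\alpha\,dy$ (valid since $\deg P_n<n$) turns into the \emph{exact} identity $P_n(a/n)=\langle f_n,k_n^a\rangle_{\mathcal H_\alpha}$ for every $n$.

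Two convergences then close the argument. First, $k_n^a\rightharpoonup\mathbb{L}_\alpha^*(a,\cdot)$ weakly in $\mathcal H_\alpha$: Proposition~\ref{model_case_bulk_Christoffel_proposition} gives $k_n^a\to\mathbb{L}_\alpha^*(a,\cdot)$ locally uniformly, and, substituting $y=s/n$ again and using $\int_{-1}^{1}K_n(a/n,y)^2|y|^\alpha\,dy=K_n(a/n,a/n)$,
\[
	\|k_n^a\|_{\mathcal H_\alpha}^2=\frac{1}{n^{\alpha+1}}K_n\Bigl(\frac{a}{n},\frac{a}{n}\Bigr)\longrightarrow\ \mathbb{L}_\alpha^*(a,a)<\infty ,
\]
so the $k_n^a$ are norm-bounded; boundedness together with locally uniform convergence identifies the weak limit with the pointwise limit $\mathbb{L}_\alpha^*(a,\cdot)$, which in particular lies in $\mathcal H_\alpha$. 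Second — the technical heart — I must choose $P_n$ with $\deg P_n<n$ so that $f_n\to g$ \emph{strongly} in $\mathcal H_\alpha$ and $P_n(a/n)\to g(a)$. Since $g$ has type $\le1-\delta$, the restriction of $g(n\,\cdot)$ to $[-1,1]$ is the restriction of an entire function of exponential type $\le(1-\delta)n$ and is polynomially bounded there (Plancherel--P\'olya); by Bernstein's theorem on polynomial approximation of entire functions of exponential type there is a near-best approximant $Q_n$ of degree $\le(1-\tfrac{\delta}{2})n$ with $\varepsilon_n:=\|Q_n-g(n\,\cdot)\|_{C[-1,1]}$ decaying geometrically and $\|Q_n\|_{C[-1,1]}$ of polynomial growth. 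Multiplying by the fast-decreasing factor $S_n(x)=\bigl(1-(x/2)^2\bigr)^{\lfloor\delta n/4\rfloor}$, which is $1+o(1)$ uniformly on shrinking neighbourhoods $|x|\le R/n$ of $0$ and exponentially small on $\tfrac12\le|x|\le1$, I set $P_n=Q_nS_n$; then $\deg P_n<n$, $P_n(a/n)\to g(a)$, $f_n\to g$ pointwise, and
\[
	\|f_n-g\|_{\mathcal H_\alpha}^2\ \le\ \int_{|s|>n}|g(s)|^2|s|^\alpha\,ds\ +\ C\varepsilon_n^2\,n^{\alpha+1}\ +\ C\int_{-n}^{n}|g(s)|^2\,|1-S_n(s/n)|^2\,|s|^\alpha\,ds\ \longrightarrow\ 0 ,
\]
the last term by dominated convergence and the middle one because $\varepsilon_n$ decays faster than $n^{-(\alpha+1)/2}$. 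Passing to the limit in $P_n(a/n)=\langle f_n,k_n^a\rangle_{\mathcal H_\alpha}$ — left side to $g(a)$, right side to $\langle g,\mathbb{L}_\alpha^*(a,\cdot)\rangle_{\mathcal H_\alpha}$ since a strongly convergent sequence paired with a weakly convergent one converges — yields $g(a)=\int_{-\infty}^{\infty}g(s)\,\mathbb{L}_\alpha^*(a,s)\,|s|^\alpha\,ds$ for every $a\in\mathbb R$.

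It remains to undo the reduction and to go from real to complex arguments. Letting $\lambda\uparrow1$, using $g(\lambda\,\cdot)\to g$ in $\mathcal H_\alpha$ and $\mathbb{L}_\alpha^*(a,\cdot)\in\mathcal H_\alpha$, gives the identity for the original $g$ and all real $a$; and both sides are entire in $z$ — the right side because $\mathbb{L}_\alpha^*(z,s)$ is entire in $z$ (from $J_\nu(z)=z^\nu G(z)$ with $G$ entire in \eqref{kernel_function}--\eqref{kernel_star_function}) and, by the large-argument asymptotics $J_\nu(s)=O(|s|^{-1/2})$ \cite{Abramowitz-Stegun}, $\|\mathbb{L}_\alpha^*(z,\cdot)\|_{\mathcal H_\alpha}$ is bounded on compact $z$-sets, so $z\mapsto\int g(s)\mathbb{L}_\alpha^*(z,s)|s|^\alpha\,ds$ is analytic by Morera's theorem — so agreement on $\mathbb R$ forces agreement on $\mathbb C$. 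The main obstacle is the polynomial construction in the second paragraph: producing polynomials of degree $<n$ that, after the scaling $x\mapsto nx$, reproduce $g$ on bounded sets yet are small near the hard edges $\pm1$ of $[-1,1]$, with a decay rate beating $n^{\alpha+1}=\int_{-n}^{n}|s|^\alpha\,ds$ — this is exactly where the hypothesis of finite exponential type is genuinely used. As an alternative one may first prove \eqref{reproducing_identity_equation} only for $g=\mathbb{L}_\alpha^*(\cdot,t)$, $t\in\mathbb R$, as the limit of $K_n(x,t)=\int K_n(x,s)K_n(s,t)\,d\mu_\alpha^b(s)$, deduce that the integral operator in \eqref{reproducing_identity_equation} is a bounded self-adjoint idempotent on $\mathcal H_\alpha$, and identify its range with the space of entire functions of exponential type $\le1$ lying in $\mathcal H_\alpha$; the difficulty then shifts to showing that this space is closed in $\mathcal H_\alpha$ and that the operator is injective on it.
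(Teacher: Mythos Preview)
Your argument is essentially correct, but it follows a genuinely different route from the paper. The paper does not pass to the limit in the reproducing identity for a general $g$; instead it first proves the special case $g=\mathbb{L}_\alpha^*(\cdot,b)$ (your ``alternative'' at the end, which is exactly Lemma~\ref{reproducing_kernel_lemma}\,(a)), using Pollard's three--term decomposition of $K_n(\mu_\alpha^b,\cdot,\cdot)$ together with pointwise bounds for the orthogonal polynomials to control the tails $\int_{r/n}^{1}$ uniformly in $n$. It then establishes an orthogonality relation at the zeros $j_{\frac{\alpha-1}{2},k}$ and a Lagrange--type sampling expansion $g(z)=\sum_{k\ne 0} g(j_{\frac{\alpha-1}{2},k})\,\mathbb{L}_\alpha^*(j_{\frac{\alpha-1}{2},k},z)/\mathbb{L}_\alpha^*(j_{\frac{\alpha-1}{2},k},j_{\frac{\alpha-1}{2},k})$ for every $g$ of exponential type $1$ with $|x|^{\alpha/2}g\in L^2$ (parts (b)--(d) of the lemma). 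Integrating this series against $\mathbb{L}_\alpha^*(\cdot,z)$ and applying (a) term by term gives the theorem. So the paper trades your polynomial--approximation step for a sampling series plus a single tail estimate on the kernel.

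What each approach buys: your route is conceptually cleaner --- one limit, no Bessel zeros, no interpolation series --- and the weak/strong pairing device is elegant. The price is the construction of the approximants $P_n$: the ``Bernstein's theorem'' you invoke is really the Chebyshev--coefficient bound on the Bernstein ellipse, and to make $\varepsilon_n$ decay geometrically you implicitly need $|g(z)|\le C(1+|z|)^{\beta}e^{(1-\delta)|\operatorname{Im} z|}$; this does follow from $|x|^{\alpha/2}g\in L^2$ for entire $g$ of finite type (a weighted Plancherel--P\'olya inequality), but that point deserves a line of justification rather than a bare citation. The paper's approach pushes all the analytic difficulty into the single estimate for the kernel tails (where the Pollard decomposition is available), and gets the sampling formula~(d) essentially for free from classical entire--function theory; it also yields (b)--(d) as byproducts, which are of independent interest in Section~\ref{section_universality}.
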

The proof of Theorem \ref{reproducing_kernel_identity}, given in the next lemma, is almost verbatim to the proof of \cite[Theorem 6.1]{Lubinsky_3}, therefore we shall not carry it out in detail. It depends on Lemma \ref{reproducing_kernel_lemma}, which is an analogue of \cite[Lemma 6.2]{Lubinsky_3}.
\begin{lemma}\label{reproducing_kernel_lemma} Let $ \alpha > -1 $. \\
(a) For all $ a, b \in \mathbb{R} $ we have
\begin{equation}\label{reproducing_identity_L}
	\mathbb{L}_\alpha^*(a,b) = \int_{- \infty}^{\infty} \mathbb{L}_{\alpha}^{*}(a,s) \mathbb{L}_{\alpha}^{*}(s,b) |s|^\alpha ds.
\end{equation}
(b) If $ \{ j_{\alpha, k} \}_{k=-\infty}^{\infty} $ denotes the zeros of $ J_\alpha(x) $ with $ j_{\alpha, k} = 0 $ (which is not necessarily a zero of $ J_\alpha(z) $), then
\[
	\int_{-\infty}^{\infty} \mathbb{L}_{\alpha}^{*}(j_{\frac{\alpha - 1}{2}, k}, x) \mathbb{L}_{\alpha}^{*}(x, j_{\frac{\alpha - 1}{2}, l}) |x|^{\alpha} dx = \delta_{k,l} \mathbb{L}_{\alpha}^{*}(j_{\frac{\alpha - 1}{2}, k}, j_{\frac{\alpha - 1}{2}, l}).
\]
(c) Let $ \{ c_k \}_{k=-\infty}^{\infty} \in l^2(\mathbb{Z}) $. Then
\[
	\int_{-\infty}^{\infty} \Bigg( \sum_{\substack{k=-\infty \\ k \neq 0}}^{\infty} c_k \frac{\mathbb{L}_{\alpha}^{*}(j_{\frac{\alpha - 1}{2}, k}, x)}{\sqrt{\mathbb{L}_{\alpha}^{*}(j_{\frac{\alpha - 1}{2}, k}, j_{\frac{\alpha - 1}{2}, k})}} \Bigg)^2 |x|^\alpha dx = \sum_{\substack{k = -\infty \\ k \neq 0}}^{\infty} c_k^2.
\]
(d) Let $ g $ be an entire function of exponential type $ 1 $. If $ |x|^{\alpha/2}g(x) \in L^2(\mathbb{R}) $, then
\[
	g(z) = \sum_{\substack{k=-\infty \\ k \neq 0}}^{\infty} g(j_{\frac{\alpha - 1}{2},k}) \frac{\mathbb{L}_{\alpha}^{*}(j_{\frac{\alpha - 1}{2}, k}, z)}{\mathbb{L}_{\alpha}^{*}(j_{\frac{\alpha - 1}{2}, k}, j_{\frac{\alpha - 1}{2}, k})}
\]
holds for all $ z \in \mathbb{C} $, and the series converge uniformly on compact sets.
\end{lemma}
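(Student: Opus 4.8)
The first move is to put $\mathbb{L}_\alpha^*$ in a single closed form. Writing $J_\nu(z)=z^\nu G_\nu(z)$ with $G_\nu$ entire, even, real on $\mathbb{R}$ and of exponential type $1$, and using the Bessel identity $G_\nu'(z)=-zG_{\nu+1}(z)$, one checks from (\ref{kernel_function})--(\ref{kernel_star_function}) that, for all $a,b\in\mathbb{C}$,
\begin{equation*}
	\mathbb{L}_\alpha^*(a,b)=\frac{g_0(a)g_0'(b)-g_0'(a)g_0(b)}{2(a-b)},\qquad g_0:=G_{\frac{\alpha-1}{2}},
\end{equation*}
so that $\mathbb{L}_\alpha^*$ is the Christoffel--Darboux-type reproducing kernel attached to the single even real entire function $g_0$ of exponential type $1$, whose zeros are real, simple and coincide with the nonzero numbers $j_{\frac{\alpha-1}{2},k}$. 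Consequently $\mathbb{L}_\alpha^*(j_k,z)=\tfrac{1}{2} g_0'(j_k)\,g_0(z)/(z-j_k)$ is, up to a constant, the cardinal interpolation function of $g_0$ at $j_k$, whence $\mathbb{L}_\alpha^*(j_k,j_k)=\tfrac{1}{2} g_0'(j_k)^2$ and $\mathbb{L}_\alpha^*(j_k,j_l)=\delta_{k,l}\,\mathbb{L}_\alpha^*(j_k,j_k)$. Morally, $\mathbb{L}_\alpha^*$ is the reproducing kernel of the Paley--Wiener/de Branges-type Hilbert space $\mathcal{B}_\alpha$ of entire functions of exponential type $\le 1$ lying in $L^2(|x|^\alpha dx)$, and (a)--(d) are the standard reproducing-kernel and sampling facts for that space; the classical asymptotics $J_\nu(x)\sim\sqrt{2/\pi x}\,\cos(x-\cdots)$ give $|x|^{\alpha/2}\mathbb{L}_\alpha^*(a,x)\in L^2(\mathbb{R})$, the integrability used throughout.

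\textbf{Parts (a)--(c).} For \textbf{(a)} I would take a limit in the exact reproducing identity for the model measure. Starting from $K_n(x,y)=\int K_n(x,s)K_n(s,y)\,d\mu_\alpha^b(s)$, the substitution $x=a/n$, $y=b/n$, $s=t/n$ followed by division by $n^{\alpha+1}$ yields
\begin{equation*}
	\frac{K_n(a/n,b/n)}{n^{\alpha+1}}=\int_{-n}^{n}\frac{K_n(a/n,t/n)}{n^{\alpha+1}}\,\frac{K_n(t/n,b/n)}{n^{\alpha+1}}\,|t|^\alpha\,dt .
\end{equation*}
By Proposition \ref{model_case_bulk_Christoffel_proposition} the integrand tends, uniformly for $t$ in compact sets, to $\mathbb{L}_\alpha^*(a,t)\mathbb{L}_\alpha^*(t,b)|t|^\alpha$, and the same reproducing identity gives $\int_{-n}^{n}\big(K_n(a/n,t/n)/n^{\alpha+1}\big)^2|t|^\alpha dt=K_n(a/n,a/n)/n^{\alpha+1}=O(1)$; a Cauchy--Schwarz tail bound plus a tightness argument --- equivalently, a direct evaluation of $\int_{\mathbb{R}}\mathbb{L}_\alpha^*(a,s)^2|s|^\alpha ds=\mathbb{L}_\alpha^*(a,a)$ by a Weber--Schafheitlin/Hankel closure relation --- then permits exchanging limit and integral, which is (\ref{reproducing_identity_L}). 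Given (a), \textbf{(b)} is immediate: the left side equals $\mathbb{L}_\alpha^*(j_k,j_l)$ by (a), and $\mathbb{L}_\alpha^*(j_k,j_l)=\delta_{k,l}\mathbb{L}_\alpha^*(j_k,j_k)$ by the cardinal-function computation above. For \textbf{(c)}, the functions $e_k:=\mathbb{L}_\alpha^*(j_k,\cdot)/\sqrt{\mathbb{L}_\alpha^*(j_k,j_k)}$ are orthonormal in $L^2(|x|^\alpha dx)$ by (a) and (b), so $\{c_k\}\mapsto\sum_k c_k e_k$ is an isometry of $\ell^2(\mathbb{Z}\setminus\{0\})$; the stated identity is the Pythagorean theorem for finite sums, extended to $\ell^2$ by continuity.

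\textbf{Part (d), the main point.} Given $g\in\mathcal{B}_\alpha$, set $h(z)=\sum_{k\neq 0}g(j_k)\,\mathbb{L}_\alpha^*(j_k,z)/\mathbb{L}_\alpha^*(j_k,j_k)=\sum_{k\neq 0} g(j_k)\,g_0(z)/\big(g_0'(j_k)(z-j_k)\big)$. The first task is to show this series converges in $\mathcal{B}_\alpha$ and locally uniformly; this rests on a Plancherel--P\'olya-type inequality $\sum_{k\neq 0}|g(j_k)|^2/\mathbb{L}_\alpha^*(j_k,j_k)\le C\|g\|_{L^2(|x|^\alpha dx)}^2$, which holds because the $j_k$ are uniformly separated with asymptotic density $1/\pi$ (Bessel asymptotics), so the classical Plancherel--P\'olya inequality applies once the weight $|x|^\alpha$ is absorbed. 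Then $h\in\mathcal{B}_\alpha$ and, by the interpolation property of the $e_k$, $h(j_l)=g(j_l)$ for every $l\neq 0$; hence $g-h$ is entire of exponential type $\le 1$ and vanishes at every zero of $g_0$, so $g-h=g_0\,\phi$ with $\phi$ entire. Finally one shows $\phi\equiv 0$: the Bessel asymptotics give $\int_{\mathbb{R}}|g_0(x)|^2|x|^\alpha dx=\infty$ (since $|g_0(x)|^2|x|^\alpha$ decays only like $c/|x|$ off the zeros of $g_0$), i.e.\ $g_0\notin\mathcal{B}_\alpha$, and the standard completeness/rigidity machinery for de Branges spaces (equivalently, Lubinsky's direct treatment, or a Levinson-type argument) then forces $g=h$. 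The two places where real work is needed are the limit exchange in (a) --- equivalently the closure relation $\int\mathbb{L}_\alpha^*(a,s)^2|s|^\alpha ds=\mathbb{L}_\alpha^*(a,a)$ --- and, above all, the convergence-plus-completeness argument behind the sampling formula (d); everything else, including Theorem \ref{reproducing_kernel_identity}, which follows by feeding (d) back, is bookkeeping.
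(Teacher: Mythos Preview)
Your outline is correct and, for parts (b)--(d), coincides with what the paper does (it simply cites Lubinsky's Lemma~6.2, whose proof is exactly the orthonormality/sampling/completeness argument you sketch, including the key step that $g_0\notin L^2(|x|^\alpha dx)$ forces $\phi\equiv 0$). The one substantive divergence is in (a), at the tail control. You and the paper both start from the finite-$n$ reproducing identity, rescale, and use Proposition~\ref{model_case_bulk_Christoffel_proposition} on $|t|\le r$; the question is how to show the tail $\int_{r/n\le |x|\le 1}$ is small. The paper does this by hand: it invokes Pollard's three-term decomposition of $K_n$ together with Nevai's pointwise bounds for the orthonormal polynomials with respect to $|x|^\alpha dx$ and $(1-x^2)|x|^\alpha dx$, and computes directly that each piece contributes $O(1/n+1/r)$. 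Your route is softer: you note $\int_{-n}^{n}(n^{-\alpha-1}K_n(a/n,t/n))^2|t|^\alpha dt = n^{-\alpha-1}K_n(a/n,a/n)=O(1)$ and then appeal to tightness. That bounded-$L^2$ observation alone does not give tightness (mass could escape), so to make your argument self-contained you genuinely need the alternative you mention: an independent verification of the diagonal closure $\int_{\mathbb{R}}\mathbb{L}_\alpha^*(a,s)^2|s|^\alpha ds=\mathbb{L}_\alpha^*(a,a)$ via Weber--Schafheitlin--type Bessel integrals, after which Cauchy--Schwarz and Fatou do finish the job. Either approach works; the paper's is more explicit and avoids the special-function integral, while yours is cleaner once that integral is granted.
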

\begin{proof}
(a) This proof was kindly provided to us by D. S. Lubinsky \cite{Lubinsky_4}. Using the reproducing kernel relations for $ K_n(\mu_\alpha^b, x, y) $, where $ \mu_\alpha^b $ is defined by (\ref{mu_alpha_bulk}), we have
\[
	K_n(\mu_\alpha^b, a/n, b/n) = \int_{-1}^{1} K_n(\mu_\alpha^b, a/n, x) K_n(\mu_\alpha^b, x, b/n) |x|^\alpha dx.
\]
Substituting $ x = s/n $, the asymptotic formula (\ref{model_case_bulk}) implies
\begin{align*}
	\mathbb{L}_{\alpha}^{*}& (a,b)  = \frac{1}{n^{\alpha + 1}} \int_{-1}^{1} K_n(\mu_\alpha^b, a/n, x) K_n(\mu_\alpha^b, x, b/n) |x|^\alpha dx + o(1) \\
	& = \frac{1}{n^{\alpha + 1}} \Big( \int_{-1}^{-r/n} + \int_{-r/n}^{r/n} + \int_{r/n}^{1} \Big) K_n(\mu_\alpha^b, a/n, x) K_n(\mu_\alpha^b, x, b/n) |x|^\alpha dx + o(1) \\
	& = \int_{-r}^{r} \mathbb{L}_\alpha^*(a,s) \mathbb{L}_\alpha^*(s,b) ds \\
	& \phantom{aaa} + \frac{1}{n^{\alpha + 1}}\Big( \int_{-1}^{-r/n} + \int_{r/n}^{1} \Big) K_n(\mu_\alpha^b, a/n, x) K_n(\mu_\alpha^b, x, b/n) |x|^\alpha dx + o(1).
\end{align*}
We will show that the last integrals are small in terms of $ n $ and $ r $. To do this, we shall use Pollard's decomposition of the Christoffel-Darboux kernel. According to \cite[(4.6)-(4.8)]{Xu} and the formula after, we have
\[
	K_n(\mu_\alpha^b, x, y) = K_{n,1}(\mu_\alpha^b, x, y) + K_{n,2}(\mu_\alpha^b, x, y) + K_{n,3}(\mu_\alpha^b, x, y),
\]
where
\begin{align*}
	K_{n,1}(\mu_\alpha^b, x, y) & = a_n p_n(x) p_n(y), \\
	K_{n,2}(\mu_\alpha^b, x, y) & = b_n \frac{(1 - y^2) p_n(x) q_{n-1}(y)}{x-y}, \\
	K_{n,3}(\mu_\alpha^b, x, y) & = b_n \frac{(1 - x^2) p_n(y) q_{n-1}(x)}{y - x},
\end{align*}
where $ a_n, b_n $ are bounded constants, $ p_n(x) $ is the $ n $-th orthonormal polynomial with respect to the measure $ |x|^\alpha dx $ and $ q_n(x) $ is the $ n $-th orthonormal polynomial with respect to the measure $ (1 - x^2)|x|^\alpha dx $. Using \cite[Lemma 29, p. 170]{Nevai_2}, we obtain the estimates
\begin{align*}
	p_n(x)^2 & \leq (\sqrt{1-x} + 1/n)^{-1} (\sqrt{1+x} + 1/n)^{-1} (|x| + 1/n)^{-\alpha}, \\
	q_n(x)^2 & \leq (\sqrt{1-x} + 1/n)^{-3} (\sqrt{1+x} + 1/n)^{-3} (|x| + 1/n)^{-\alpha}.
\end{align*}
Now the Cauchy-Schwarz inequality gives
\begin{align*}
	\frac{1}{n^{\alpha + 1}} & \int_{r/n}^{1} K_n(\mu_\alpha^b, a/n, x) K_n(\mu_\alpha^b, x, b/n) |x|^\alpha dx \\
	& \leq \Bigg( \frac{1}{n^{\alpha + 1}} \int_{r/n}^{1} K_n(\mu_\alpha^b, a/n, x)^2 |x|^\alpha dx \Bigg)^{1/2} \\ & \phantom{aaaaa} \times \Bigg( \frac{1}{n^{\alpha + 1}} \int_{r/n}^{1} K_n(\mu_\alpha^b, x, b/n)^2 |x|^\alpha dx \Bigg)^{1/2}.
\end{align*}
Suppose that $ r > \max\{a,b\} $. We have the following estimates. (In the following calculations the constant $ c $ often varies from line to line.) \\
\begin{equation}
\begin{aligned}
	\frac{1}{n^{\alpha + 1}} \int_{r/n}^{1} & K_{n,1}(\mu_\alpha^b, a/n, x)^2 |x|^\alpha dx \\
	& \leq \frac{c}{n^{\alpha + 1}} \int_{r/n}^{1} |p_n(a/n)|^2 |p_n(x)|^2 |x|^\alpha dx \\
	& \leq \frac{c}{n} \int_{r/n}^{1} |p_n(x)|^2 |x|^\alpha dx \\
	& \leq \frac{c}{n}
\end{aligned}
\end{equation}
\begin{equation}
\begin{aligned}
	\frac{1}{n^{\alpha + 1}} \int_{r/n}^{1} & K_{n,2}(\mu_\alpha^b, a/n, x)^2 |x|^\alpha dx \\
	& \leq \frac{c}{n^{\alpha + 1}} \int_{r/n}^{1}  |p_n(a/n)|^2 \bigg| \frac{(1-x^2) q_{n-1}(x)}{x - a/n} \bigg|^2 |x|^\alpha dx \\
	& \leq \frac{c}{n} \int_{r/n}^{1/2} |x|^{-2} dx + \frac{c}{n} \int_{1/2}^{1}|1 - x|^{1/2} dx \\
	& \leq \frac{c}{r} + \frac{c}{n}
\end{aligned}
\end{equation}
\begin{equation}
\begin{aligned}
	\frac{1}{n^{\alpha + 1}} \int_{r/n}^{1} & K_{n,3}(\mu_\alpha^b, a/n, x) dx \\
	& \leq \frac{c}{n^{\alpha + 1}} \int_{r/n}^{1} |1 - a^2/n^2|^2 |q_{n-1}(a/n)|^2 \bigg| \frac{p_n(x)}{x - a/n} \bigg|^2 |x|^\alpha dx \\
	& \leq \frac{c}{n} \int_{r/n}^{1/2} |x|^{-2} dx + \frac{c}{n} \int_{1/2}^{1} |1 - x|^{-1/2} dx \\
	& \leq \frac{c}{r} + \frac{c}{n}
\end{aligned}
\end{equation}
These altogether give that
\[
	\frac{1}{n^{\alpha + 1}} \int_{r/n}^{1} K_n(\mu_\alpha^b, a/n, x) K_n(\mu_\alpha^b, x, b/n) |x|^\alpha dx \leq c \Big( \frac{1}{n} + \frac{1}{r} \Big).
\]
Overall, we have
\[
	\mathbb{L}_\alpha^*(a,b) = \int_{-r}^{r} \mathbb{L}_\alpha^*(a,s) \mathbb{L}_\alpha^*(s,b) |s|^\alpha ds + O(1) \Big( \frac{1}{n} + \frac{1}{r} \Big),
\]
from which (\ref{reproducing_identity_L}) follows by letting first $ n $ then $ r $ to infinity. \\
(b) is a simple consequence of (a) and the proofs of (c)-(d) go through verbatim as in \cite[Lemma 6.2]{Lubinsky_3}.
\end{proof}

\subsection{Limits of $ K_n $}
From now on, $ K_n(z,w) $ will always denote the $ n $-th Christoffel-Darboux kernel with respect to the measure $ \mu $ in Theorem \ref{main_theorem_bulk}. Define
\begin{equation}\label{f_n_definition}
	f_n(a,b) = \frac{K_n\big(x_0 + \frac{a}{\pi \omega_K(x_0) n}, x_0 + \frac{b}{ \pi \omega_K(x_0) n} \big)}{K_n(x_0, x_0)}, \quad a, b \in \mathbb{C}.
\end{equation}
For convenience we shall use the notation
\begin{equation}\label{star_notation}
	z^* = \frac{z}{\pi \omega_K(x_0)} 
\end{equation}
for all $ z $ in the complex plane, so this way $ f_n(a,b) $ takes the form
\[
	f_n(a,b) = \frac{K_n(x_0 + a^*/n, x_0 + b^*/n)}{K_n(x_0, x_0)}.
\]
First shall prove that $ \{ f_n(a,b) \}_{n=1}^{\infty} $ is a normal family of entire functions in both variable and then we will study its possible limits.
\begin{lemma}\label{normality_lemma}
For all $ a, b \in \mathbb{C} $, we have
\begin{equation}\label{normality_f_n_estimate}
	|f_n(a,b)| \leq c_1 e^{c_2 (|\operatorname{Im}(a)| + |\operatorname{Im}(b)|)} (1 + |\operatorname{Re}(a)|)^{-\alpha/2} (1 + |\operatorname{Re}(b)|)^{-\alpha/2}
\end{equation}
for some positive constants $ c_1, c_2 $. In particular, $ \{ f_n(a,b) \}_{n=1}^{\infty} $ is a normal family of functions for $ a, b $ in compact subsets of the complex plane. 
\end{lemma}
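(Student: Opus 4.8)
The plan is to reduce the two–variable bound to a bound on a non-negative ``diagonal'' quantity via Cauchy–Schwarz, to control that quantity by comparison with the already-solved model measure $\mu_\alpha^b$, and then to read off normality. First I would note that, since $K_n(z,w)=\sum_{k=0}^{n-1}p_k(z)p_k(w)$ with the $p_k$ having real coefficients, the Cauchy–Schwarz inequality gives $|K_n(z,w)|^2\le K_n(z,\bar z)\,K_n(w,\bar w)$, and $\overline{x_0+a^*/n}=x_0+\bar a^{\,*}/n$ in the notation (\ref{star_notation}); hence
\[
|f_n(a,b)|^2\le f_n(a,\bar a)\,f_n(b,\bar b),\qquad f_n(a,\bar a)=\frac{K_n(x_0+a^*/n,\ x_0+\bar a^{\,*}/n)}{K_n(x_0,x_0)}\ge 0 .
\]
So it is enough to prove $f_n(a,\bar a)\le c_1 e^{2c_2|\operatorname{Im}(a)|}(1+|\operatorname{Re}(a)|)^{-\alpha}$ with $c_1,c_2$ independent of $n$, for $a$ with $x_0+a^*/n$ in the fixed neighbourhood where $\mu$ is a generalized Jacobi weight (a range which exhausts every fixed compact set once $n$ is large, which is all the sequel uses). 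Introducing the complex Christoffel function $\lambda_n(\mu,z)=1/K_n(z,\bar z)=\inf\{\|P\|_{L^2(\mu)}^2:\deg P<n,\ P(z)=1\}$, the quantity above equals $\lambda_n(\mu,x_0)/\lambda_n(\mu,x_0+a^*/n)$, so one needs a lower bound for $\lambda_n(\mu,x_0+a^*/n)$.

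For that lower bound I would use monotonicity of Christoffel functions in the measure. On a fixed interval $I\subseteq\operatorname{int}(K)$ with $x_0$ interior to it one has $\mu\ge \tfrac{w(x_0)}{2}|x-x_0|^\alpha\chi_I(x)\,dx=:\tfrac{w(x_0)}{2}\nu$, whence $\lambda_n(\mu,z)\ge\tfrac{w(x_0)}{2}\lambda_n(\nu,z)$ and $f_n(a,\bar a)\le\tfrac{2}{w(x_0)}\,\lambda_n(\mu,x_0)/\lambda_n(\nu,x_0+a^*/n)$. An affine change of variable turns $\nu$ into a constant multiple of $\mu_\alpha^b$, so $\lambda_n(\nu,x_0+a^*/n)$ is, up to a fixed factor, $\lambda_n(\mu_\alpha^b,\tilde a/n)$ with $\tilde a$ a fixed scalar multiple of $a$. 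Here I would invoke the Riemann–Hilbert asymptotics behind Proposition~\ref{model_case_bulk_Christoffel_proposition}: carried out in a full complex neighbourhood of the origin (as in \cite{Vanlessen}; the excerpt records only the real-axis case), they give $K_n(\mu_\alpha^b,\tilde a/n,\bar{\tilde a}/n)=n^{\alpha+1}\mathbb{L}_\alpha^*(\tilde a,\bar{\tilde a})+O(n^{\alpha})$ uniformly for $\tilde a$ in compact sets, while the elementary bound $|J_\nu(w)|\le C(1+|w|)^{-1/2}e^{|\operatorname{Im}(w)|}$ together with the entire structure $J_\nu(w)=w^\nu G(w)$ yields $|\mathbb{L}_\alpha^*(\tilde a,\bar{\tilde a})|\le C e^{2c_2|\operatorname{Im}(\tilde a)|}(1+|\operatorname{Re}(\tilde a)|)^{-\alpha}$. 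Combining this with $\lambda_n(\mu,x_0)\asymp n^{-\alpha-1}$ from Theorem~\ref{main_theorem_bulk} (equivalently, the two–sided Nevai-type bound established inside the proof of Lemma~\ref{perturbation_lemma}, which for real arguments already supplies the $(1+|\operatorname{Re}(a)|)^{-\alpha}$ factor directly) gives the required estimate for $f_n(a,\bar a)$, and then (\ref{normality_f_n_estimate}) follows from the Cauchy–Schwarz inequality above.

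Finally, on any compact subset of $\mathbb{C}^2$ the right-hand side of (\ref{normality_f_n_estimate}) is bounded, so $\{f_n\}$ is locally uniformly bounded; since $f_n$ is entire (indeed a polynomial of degree $<n$) in each variable, \cite[Theorem 14.6]{BigRudin} gives that $\{f_n\}$ is normal.

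The main obstacle is the off-axis control in the second paragraph: one needs the exponential-in-$|\operatorname{Im}(a)|$ growth with constants uniform in $n$, and a naive term-by-term estimate of $\sum_{k<n}|p_k(x_0+a^*/n)|^2$ via the Bernstein–Walsh lemma loses a power of $n$ (because $\sum_{k<n}\max_{|u-x_0|\le 1/k}|p_k(u)|^2\asymp n^{2+\alpha}$ rather than $\asymp n^{1+\alpha}$); it is precisely the comparison with $\mu_\alpha^b$, whose Christoffel–Darboux kernel is known at complex points from the Riemann–Hilbert analysis, that circumvents this loss — which is why having first disposed of the model case is essential.
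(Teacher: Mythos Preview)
Your reduction via Cauchy--Schwarz to the diagonal quantity $f_n(a,\bar a)$ is exactly how the paper begins, but from that point the two arguments diverge genuinely.

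The paper does \emph{not} compare $\mu$ to the model measure. Instead it works directly with $\mu$: for real $\xi$ the map
\[
z\longmapsto P_n(z):=\lambda_n(\mu,x_0+\xi/n)\,K_n\bigl(x_0+\xi/n+z,\ x_0+\xi/n+z\bigr)
\]
is a polynomial of degree $2n-2$ with $P_n(0)=1$; the two-sided Nevai-type bound (\ref{Nevai_general_lower_bound_bulk})--(\ref{Nevai_general_upper_bound_bulk}) shows $P_n$ is uniformly bounded on a fixed real interval, and the iterated Bernstein inequality then gives $|P_n^{(k)}(0)|\le CM^kn^k$. Summing the Taylor series at $z=i\eta/n$ yields $|P_n(i\eta/n)|\le Ce^{M|\eta|}$, i.e.\ the complex diagonal is controlled by $e^{M|\operatorname{Im}a|}$ times the real diagonal, and the latter is handled by Theorem~\ref{main_theorem_bulk} together with the elementary asymptotic $\mathbb{L}_\alpha^*(t)\sim\pi^{-1}|t|^{-\alpha}$. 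This is entirely self-contained: it uses only real-variable Christoffel-function bounds already proved and a polynomial inequality.

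Your route---monotonicity in the measure to pass to $\mu_\alpha^b$, then complex-argument asymptotics for $K_n(\mu_\alpha^b,\tilde a/n,\bar{\tilde a}/n)$---is conceptually clean, but it rests on an input the paper does not supply. Proposition~\ref{model_case_bulk_Christoffel_proposition} is proved only for real $a,b$: the key formulas (\ref{model_case_lemma_1_Y_positive})--(\ref{model_case_lemma_1_Y_negative}) are obtained by taking boundary values $M_+(x)$ on the real axis, and the passage from (\ref{model_case_bulk_normalized}) to (\ref{model_case_bulk}) uses normalization by $|a|^{\alpha/2}$. You are right that the Riemann--Hilbert machinery of \cite{Vanlessen} lives in a complex neighbourhood and the extension is plausible, but carrying it through for the Christoffel--Darboux kernel at complex conjugate arguments is additional work, not a citation. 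The paper's Taylor-expansion trick is precisely what makes that extension unnecessary, and it is the device you are missing: it converts the off-axis control problem into a derivative bound for a polynomial known to be bounded on a real interval, which costs nothing beyond Bernstein's inequality.
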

\begin{proof}
Since by definition (note that the complex conjugate has been left off for purpose) we have
\[
	K_n(z,w) = \sum_{k=0}^{n-1} p_k(z) p_k(w),
\]
the orthonormality of the $ p_k $-s imply
\[
	K_n(z,w) = \int K_n(z,x) K_n(x, w) d\mu(x).
\]
Using the Cauchy-Schwarz inequality, we have
\begin{equation}\label{normality_estimate_1}
	\bigg|K_n\Big(x_0 + \frac{a^*}{n}, x_0 + \frac{b^*}{n} \Big)\bigg|^2 \leq \bigg|K_n\Big(x_0 + \frac{a^*}{n}, x_0 + \frac{a^*}{n} \Big)\bigg| \bigg|K_n\Big(x_0 + \frac{b^*}{n}, x_0 + \frac{b^*}{n} \Big)\bigg|
\end{equation}
Similarly as in the proof of Lemma \ref{perturbation_lemma}, define the polynomial
\[
	P_n(z) = \frac{\lambda_n(\mu, x_0 + \xi/n)}{\lambda_n(\mu, x_0 + \xi/n  + z)}
\]
for an arbitrary $ \xi \in \mathbb{R} $. As (\ref{Christoffel_polynomial_form}) implies, $ P_n $ is indeed a polynomial with $ P_n(0) = 1 $. Therefore for an arbitrary $ \eta \in \mathbb{R} $ we have
\[
	P_n(i  \eta / n) = 1 + \sum_{k=1}^{2n-2} \frac{P_{n}^{(k)}(0)}{k!} \Big( i \frac{\eta}{n} \Big)^k.
\]
According to (\ref{lambda_polynomial_bound}), $ |P_{n}^{(k)}(0)| \leq C M^k n^k $ for some constants $ C $ and $ M $, therefore
\begin{align*}
	|P_n(i \eta / n)| & \leq 1 + \sum_{k=1}^{2n-2} \frac{\big|P_{n}^{(k)}(0) \big|}{k!} \bigg| \frac{\eta}{n} \bigg|^k \leq 1 + \sum_{k=0}^{2n-2} C \frac{M^k |\eta|^k}{k!} \leq C e^{M |\eta|}.
\end{align*}
Together with this and (\ref{normality_estimate_1}), we have
\begin{align*}
	\Big|K_n\Big(x_0 + \frac{a^*}{n}, x_0 + \frac{b^*}{n} \Big)\Big|^2 & \leq \bigg|K_n\Big(x_0 + \frac{a^*}{n}, x_0 + \frac{a^*}{n} \Big)\bigg| \bigg|K_n\Big(x_0 + \frac{b^*}{n}, x_0 + \frac{b^*}{n} \Big)\bigg| \\
	& \leq C e^{M |\operatorname{Im}(a^*)|}K_n\bigg(x_0 + \frac{\operatorname{Re}(a^*)}{n}, x_0 + \frac{\operatorname{Re}(a^*)}{n} \bigg) \\
	& \phantom{aa} \times e^{M |\operatorname{Im}(b^*)|}K_n\bigg(x_0 + \frac{\operatorname{Re}(b^*)}{n}, x_0 + \frac{\operatorname{Re}(b^*)}{n} \bigg).
\end{align*}
Now, Theorem \ref{main_theorem_bulk} says that
\[
	\frac{K_n\big(x_0 + \frac{\xi}{n}, x_0 + \frac{\xi}{n} \big)}{K_n(x_0, x_0)} = (1 + o(1)) \mathbb{L}_{\alpha}^{*}(\pi \omega_K(x_0) \xi)
\]
uniformly for $ \xi $ in compact subsets of the real line. Using the formulas \cite[9.1.27]{Abramowitz-Stegun}, we have
\begin{align*}
	\mathbb{L}_{\alpha}^{*}(a) & = \frac{1}{2 a^{\alpha-1}} \Big( J_{\frac{\alpha+1}{2}}^{\prime}(a) J_{\frac{\alpha-1}{2}}(a) - J_{\frac{\alpha-1}{2}}^{\prime}(a) J_{\frac{\alpha+1}{2}}(a) \Big) \\
	& = \frac{1}{2 a^{\alpha - 1}} \Big( \Big( J_{\frac{\alpha-1}{2}}(a) - \frac{\alpha+1}{2a} J_{\frac{\alpha+1}{2}}(a) \Big) J_{\frac{\alpha - 1}{2}}(a) \\
	& \phantom{aaaaaaaaaaaa} - \Big( -J_{\frac{\alpha+1}{2}}(a) + \frac{\alpha-1}{2a} J_{\frac{\alpha-1}{2}}(a) \Big) J_{\frac{\alpha+1}{2}}(a) \Big).
\end{align*}
With this and some elementary trigonometric identities, \cite[9.2.1]{Abramowitz-Stegun} gives that for large $ \xi $ we have
\[
	J_{\frac{\alpha+1}{2}}(a) = \bigg( \frac{2}{\pi a} \bigg)^{1/2} \Big( \cos\Big(a - \frac{(\alpha + 1) \pi}{4} - \frac{\pi}{4} \Big) + O(1/a) \Big)
\]
and
\[
	J_{\frac{\alpha-1}{2}}(a) = \bigg( \frac{2}{\pi a} \bigg)^{1/2} \Big( \sin\Big(a - \frac{(\alpha + 1) \pi}{4} - \frac{\pi}{4} \Big) + O(1/a) \Big),
\]
which yields that
\begin{equation}\label{L_alpha_large_argument}
	\mathbb{L}_\alpha^*(a) = \frac{1}{\pi |a|^{\alpha}}(1 + o(1)),
\end{equation}
therefore $ \mathbb{L}_{\alpha}^{*}(\pi \omega_K(x_0) \xi) \leq C |\xi|^{-\alpha} $ holds for large $ \xi $. Since $ \mathbb{L}_{\alpha}^{*}(\pi \omega_K(x_0) \xi) \leq C (1 + |\xi|)^{-\alpha} $ obviously holds in any bounded set containing $ 0 $ for some constant, we have
\begin{align*}
\Bigg|\frac{K_n\big(x_0 + \frac{a^*}{n}, x_0 + \frac{b^*}{n} \big)}{K_n(x_0, x_0)}\Bigg|^2 & \leq C e^{c(|\operatorname{Im}(a)| + |\operatorname{Im}(b)|)}(1 + |\operatorname{Re}(a)|)^{-\alpha} (1 + |\operatorname{Re}(b)|)^{-\alpha}
\end{align*}
for some constants $ c, C $ and this implies (\ref{normality_f_n_estimate}).
\end{proof}

Now we study the possible limits of $ \{ f_n(a,b) \}_{n=1}^{\infty} $. In the next lemmas we prove that a limit of its subsequence is an entire function of exponential type belonging to the Cartwright class and we take a look at its zeros. The exponential type and the behavior of the zeros are connected, because if $ g(z) $ is an entire function of exponential type $ \sigma $ belonging to the Cartwright class, then
\[
	\lim_{r \to \infty} \frac{n(g, r)}{2r} = \frac{\sigma}{\pi}
\]
holds, where $ n(g,r) $ is the number of zeros of $ g $ in a ball of radius $ r $ centered at zero. (See \cite[Theorem 17.2.1]{Levin} for details.) Before we state our next lemma, we fix some notations about the zeros of some frequently used functions. \\

First define the function
\begin{equation}\label{psi_n_definition}
	\psi_n(z,w) = p_n(z)p_{n - 1}(w) - p_n(w) p_{n-1}(z).
\end{equation}
For real $ \xi $, the zeros of $ \psi_n(\xi, \cdot) $ will be denoted as
\begin{equation}\label{psi_n_zeros}
	\dots < t_{-1,n}(\xi) < t_{0n}(\xi) = \xi < t_{1n}(\xi) \dots.
\end{equation}
Note that these zeros are indeed real, see \cite[Theorem 3.1]{Freud}, and they are centered around $ \xi $, moreover $ t_{0n}(\xi) = \xi $ is indeed a zero of $ \psi_n(\xi, \cdot) $. The zeros of $ K_n(x_0 + a/n, \cdot) $ are denoted as
\begin{equation}\label{K_n_zeros}
	\dots < x_{-1,n}(a) < x_0 + \frac{a}{n} < x_{1n}(a) < \dots.
\end{equation}
For convenience, we write $ x_{0n}(a) = x_0 + a/n $. Note again that since $ K_n(\xi, \xi) $ is strictly positive, $ x_{0n}(a) $ cannot be a zero of $ K_n(x_0 + a/n, \cdot) $. The Christoffel-Darboux formula (\ref{Christoffel-Darboux}) says that
\[
	K_n(x,y) = \frac{\gamma_{n-1}}{\gamma_n} \frac{\psi_n(x,y)}{x - y},
\]
therefore
\begin{equation}\label{zeros_of_K_n_as_zeros_of_psi_n}
	x_{kn}(a) = t_{kn}(x_0 + a/n)
\end{equation}
holds for all integer $ k $ for which the above expression makes sense. In our case, the zeros of $ f_n(a, \cdot) $ are also important, thus they will be denoted as
\[
	\dots < \rho_{-1,n}(a) < a < \rho_{1n}(a) < \dots,
\]
and again we write $ \rho_{0n}(a) = a $ for convenience. Since $ f_n(a,a) $ is strictly positive, $ \rho_{0n}(a) $ cannot be a zero of $ f_n(a, \cdot) $. The definition of $ f_n(a, b) $ implies that
\begin{equation}\label{zeros_of_f_n_as_zeros_of_K_n}
	\rho_{kn}(a) = n\pi \omega_K(x_0)(x_{kn}(a^*) - x_0)
\end{equation}
holds for all $ k $ integers for which the above expression makes sense, where $ a^* = a/(\pi \omega_K(x_0)) $.

\begin{lemma}\label{zeros}
Let $ f(a,b) = \lim_{k \to \infty} f_{n_k}(a,b) $ for some subsequence $ n_k $. \\
(a) If $ a \in \mathbb{R} $, then all the zeros of $ f(a, \cdot) $ are real. Moreover, if $ n(f(a, \cdot), r) $ denotes the number of zeros of $ f(a, \cdot) $ in the ball of center 0 with radius $ r $, then
\begin{equation}\label{zeros_in_a_ball}
	n(f(a, \cdot), r) -  n(f(0, \cdot), r) = O(1).
\end{equation}
(b) Let
\[
	\dots \leq \rho_{-2} \leq \rho_{-1} < 0 <  \rho_1 \leq \rho_2 \leq \dots
\]
denote the zeros of $ f(0, \cdot) $ ordered around zero and write $ \rho_0 = 0 $ for convenience.  Then
\begin{equation}\label{zeros_converge}
	\rho_{kn}(0) \to \rho_k, \quad n \to \infty
\end{equation}
holds for all $ k \geq 0 $ and there are positive constants $ c_1, c_2 $ such that
\begin{equation}\label{zero_spacing}
\begin{aligned}
	\rho_{k} - \rho_{k-1} & \leq c_1, \\
	\rho_{k} - \rho_{k - 2} & \geq c_2.
\end{aligned}
\end{equation}
In particular, the zeros of $ f(0, \cdot) $ are at most double.
\end{lemma}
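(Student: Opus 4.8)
The proof runs on Hurwitz's theorem applied to the convergent subsequence $f_{n_k}\to f$, so I would first record the structural inputs. By the Christoffel--Darboux formula (\ref{Christoffel-Darboux}), for real $a$ we have $f_n(a,b)=c_n\,\psi_n(x_0+a^*/n,\,x_0+b^*/n)/(a^*-b^*)$ with $c_n>0$ and $\psi_n$ as in (\ref{psi_n_definition}); fixing the first argument, $\psi_n(\xi,\cdot)$ is a quasi-orthogonal polynomial with $n$ real simple zeros, one of which is $\xi$ (see \cite[Theorem 3.1]{Freud}), so $f_n(a,\cdot)$ has exactly the $n-1$ real simple zeros $\rho_{kn}(a)$ of (\ref{K_n_zeros})--(\ref{zeros_of_f_n_as_zeros_of_K_n}), and for distinct real $a$ these zeros strictly interlace (each set being a level set $\{p_n/p_{n-1}=\text{const}\}$, on each interval between poles of which $p_n/p_{n-1}$ is strictly monotone by the sign-definiteness of the Wronskian). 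Next, $f(a,\cdot)$ is not identically zero: by (\ref{Christoffel_polynomial_form}), (\ref{f_n_definition})--(\ref{star_notation}) and Theorem \ref{main_theorem_bulk}, $f_n(a,a)=\lambda_n(\mu,x_0)/\lambda_n(\mu,x_0+a^*/n)\to\mathbb L_\alpha^*(a)/\mathbb L_\alpha^*(0,0)$, which is $>0$ for real $a$ because $\mathbb L_\alpha^*(a)=\int_{-\infty}^{\infty}\mathbb L_\alpha^*(a,s)^2|s|^\alpha\,ds>0$ by Lemma \ref{reproducing_kernel_lemma}(a). (That $f(a,\cdot)$ is moreover entire of finite exponential type in the Cartwright class follows from (\ref{normality_f_n_estimate}), but Hurwitz only needs $f(a,\cdot)\not\equiv 0$.)

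\emph{Part (a).} For real $a$ every $f_n(a,\cdot)$ is zero-free off $\mathbb R$, so Hurwitz's theorem applied in the open upper half-plane, and symmetrically in the lower half-plane, forces the non-trivial limit $f(a,\cdot)$ to be zero-free there as well; hence all its zeros are real. For (\ref{zeros_in_a_ball}), the strict interlacing of the $\rho_{kn}(a)$ with the $\rho_{kn}(0)$ gives $|n(f_n(a,\cdot),r)-n(f_n(0,\cdot),r)|\le 2$ uniformly in $n$ and $r$; choosing $r$ so that neither $f(a,\cdot)$ nor $f(0,\cdot)$ vanishes on $|z|=r$, the argument principle together with local uniform convergence yields $n(f(a,\cdot),r)=n(f_{n_k}(a,\cdot),r)$ for large $k$, and likewise for $0$, so $|n(f(a,\cdot),r)-n(f(0,\cdot),r)|\le 2$ for all such $r$; a one-sided limit in $r$ removes the exceptional radii.

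\emph{Part (b).} For (\ref{zeros_converge}) I would again use Hurwitz, with bookkeeping for multiplicities and for the fact that the label $\rho_{0n}(0)=0$ is not a zero. Fix $k\ge 1$, let $m$ be the multiplicity of the value $\rho_k$ as a zero of $f(0,\cdot)$, and let $p,\dots,p+m-1$ be the indices of the zeros of $f(0,\cdot)$ equal to $\rho_k$. Choosing $\varepsilon>0$ so that $(\rho_k-\varepsilon,\rho_k+\varepsilon)$ isolates exactly these $m$ zeros and avoids $0$, Hurwitz forces $f_{n_k}(0,\cdot)$ to have exactly $m$ zeros there and exactly $p-1$ positive zeros below $\rho_k-\varepsilon$ for large $k$, so its $p$-th through $(p+m-1)$-st positive zeros all lie in that interval; since $\rho_{jn}(0)$ is by definition the $j$-th positive zero of $f_n(0,\cdot)$, this gives $\rho_{jn}(0)\to\rho_k$ for $j=p,\dots,p+m-1$, in particular (\ref{zeros_converge}); the cases $k=0$ and $k<0$ are immediate. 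For the spacing (\ref{zero_spacing}) the plan is to prove the uniform pre-limit bounds $c_2\le\rho_{kn}(0)-\rho_{k-2,n}(0)$ and $\rho_{kn}(0)-\rho_{k-1,n}(0)\le c_1$ and then let $n\to\infty$ via (\ref{zeros_converge}). Undoing the scaling (\ref{zeros_of_f_n_as_zeros_of_K_n}) and using the standard interlacing relating the zeros of $K_n(x_0,\cdot)$ to those of $p_n$, these reduce to the assertion that consecutive zeros of $p_n$ in a fixed neighbourhood of $x_0$ are spaced between $c/n$ and $C/n$, which I would deduce from the two-sided Nevai-type Christoffel estimate $\lambda_n(\mu,x)\asymp n^{-1}(|x-x_0|+1/n)^\alpha$ obtained inside the proof of Lemma \ref{perturbation_lemma}: a gap that is too short would make $\lambda_n(\mu,\cdot)$ too small near its midpoint and a gap that is too long would make it too large. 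The lower bound in (\ref{zero_spacing}) in particular shows the zeros of $f(0,\cdot)$ are at most double; alternatively one sees this structurally, since along a further subsequence $p_n(x_0+b^*/n)/\sqrt{K_n(x_0,x_0)}$ and its $p_{n-1}$-analogue converge (normal families by the bound in Lemma \ref{normality_lemma}) to entire functions $P,\widetilde P$, and then $f(0,b)=c\bigl(P(0)\widetilde P(b)-\widetilde P(0)P(b)\bigr)/(-b)$ with $f(0,0)=c\,W(P,\widetilde P)(0)>0$, so a triple real zero of $f(0,\cdot)$ would force the Wronskian $W(P,\widetilde P)$ to vanish on $\mathbb R$, which is impossible.

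The step I expect to be the real obstacle is the uniform pre-limit spacing estimate for the $\rho_{kn}(0)$: away from $x_0$ one could invoke classical clock-type asymptotics for the zeros of $p_n$, but at the singular point $x=x_0$ these asymptotics are not available, so both the separation of zeros two apart and the absence of large gaps must be squeezed out of the Christoffel-function bounds alone. This is precisely where the generalized-Jacobi nature of $\mu$ and the hypothesis $\alpha\neq 0$ genuinely enter, and it is the one place that calls for real work rather than a reference.
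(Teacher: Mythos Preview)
Your part (a) is essentially the paper's argument; your interlacing of the zero sets of $K_n(\xi_1,\cdot)$ and $K_n(\xi_2,\cdot)$ via the monotonicity of $p_n/p_{n-1}$ is a clean variant of the paper's device, which instead interlaces each of these zero sets with the zeros $x_{1n}<\cdots<x_{nn}$ of $p_n$ (so that in any interval $[x_{mn},x_{kn}]$ the count of zeros of $\psi_n(\xi,\cdot)$ differs from $k-m$ by at most $1$, uniformly in $\xi$). Either way one gets a bound uniform in $n$ and $r$, and Hurwitz passes it to the limit.

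The real divergence is in the spacing step of part (b). The paper does not try to extract zero spacing from the Christoffel-function two-sided bound; it simply observes that $d\mu(x)=w(x)|x-x_0|^\alpha\,dx$ is a \emph{locally doubling} weight on $(x_0-\varepsilon_0,x_0+\varepsilon_0)$ and cites Varga's theorem \cite[Theorem 1.1]{Varga}, which gives directly $c/n\le x_{m+1,n}-x_{mn}\le C/n$ for consecutive zeros of $p_n$ in that interval. Combined with the interlacing of the $t_{kn}(x_0)$ with the $x_{kn}$, this yields the pre-limit bounds $\rho_{kn}(0)-\rho_{k-1,n}(0)\le c_1$ and $\rho_{kn}(0)-\rho_{k-2,n}(0)\ge c_2$ at once, and Hurwitz (which you already set up) finishes. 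So the ``real obstacle'' you identify is in fact handled by a single reference; no bespoke argument from $\lambda_n$ is needed. Your proposed route through Christoffel-function bounds could in principle be made to work (Markov--Stieltjes gives the upper spacing bound; the lower bound is more delicate), but it is strictly harder than what the paper does.

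One small correction: the hypothesis $\alpha\neq 0$ plays no role in this lemma. The doubling property of $|x-x_0|^\alpha$ holds for all $\alpha>-1$, including $\alpha=0$, so nothing here ``genuinely enters'' through $\alpha\neq 0$; that restriction appears only in the statement of Theorem~\ref{main_theorem_bulk} to exclude the classical sinc case already covered by earlier work.
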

\begin{proof}
(a) The Christoffel-Darboux formula (\ref{Christoffel-Darboux}) gives that
\[
	f_n(a,z) = \frac{\pi \omega_K(x_0) n}{K_n(x_0, x_0)} \frac{\gamma_{n-1}}{\gamma_n} \frac{\psi_n(x_0 + \frac{a^*}{n}, x_0 + \frac{z^*}{n})}{a - z},
\]
where $ \psi_n(z,w) $ is defined by (\ref{psi_n_definition}).
A classic and well-known fact says that for real $ \xi $, all zeros of $ \psi_n(\psi, \cdot) $ are real, see for example \cite[Theorem 3.1]{Freud}. Hence Hurwitz's theorem implies that the zeros of $ f(a, \cdot) $ are also real for all $ a \in \mathbb{R} $. The proof of (\ref{zeros_in_a_ball}) goes exactly as in \cite[Lemma 4.3]{Lubinsky_3}, which we include for completeness. It is known that if $ x_{1n} < x_{2n} < \dots < x_{nn} $ denotes the zeros of the orthonormal polynomial $ p_n $, then if $ p_n(\xi) p_{n-1}(\xi) \neq 0 $, the function $ \psi_n(\xi, \cdot) $ has a simple zero in each of the intervals
\[
	(x_{1n}, x_{2n}), \dots, (x_{n-1,n}, x_{nn})
\]
plus one zero outside $ [x_{1n}, x_{nn}] $, and if $ p_n(\xi) p_{n-1}(\xi) = 0 $, then $ \psi_n(\xi, \cdot) $ is a multiple of $ p_{n-1} $ or $ p_n $, hence the interlacing property of the zeros of orthogonal polynomials imply that in the former case $ \psi_n(\xi, \cdot) $ has a zero in each of the intervals
\[
	(x_{1n}, x_{2n}), \dots, (x_{n-1,n}, x_{nn}),
\]
and in the latter case the zeros of $ \psi_n(\xi, \cdot) $ coincide with the zeros of $ p_n $. For these facts, see Theorem 2.3 and the proof of Theorem 3.1 in \cite{Freud}. Therefore, if $ n(\psi_n(a, \cdot), [c,d]) $ denotes the zeros of $ \psi_n(a, \cdot) $ in the interval $ [c,d] $, then
\[
	|n(\psi_n(a, \cdot), [x_{mn}, x_{kn}]) - (m - k)| \leq 1.
\]
Now, if $ \{ x_{jn}(a) \} $ denotes the zeros of $ K_n(x_0 + a/n, \cdot) $ centered around $ x_0 + a/n $ as in (\ref{K_n_zeros}), then $ \rho_{kn}(a) = n \pi \omega_K(x_0) (x_{kn}(a^*) - x_0) $, where $ a^* = a/(\pi \omega_K(x_0)) $. (Recall that the definition of $ f_n(a,b) $ included the scaling constant $ \pi \omega_K(x_0) $.) This, together with the previous observations about the location of the zeros of $ \psi_n(a, \cdot) $, means that if $ r $ is fixed and $ n $ is large, 
\[
	|n(f_n(a, \cdot),r) - n(f_n(0, \cdot),r)| \leq M
\]
for some constant $ M $. Hurwitz's theorem implies again that the above holds for $ f $, therefore we have (\ref{zeros_in_a_ball}). \\

(b) First note that $ \rho_0 $ can never be a zero of $ f(0, \cdot) $, since $ f_n(0,0) = 1 $ for all $ n $. Now (\ref{zeros_converge}) is immediate from Hurwitz's theorem. Since $ \mu $ is a doubling measure in a small neighborhood $ (x_0 - \varepsilon_0, x_0 + \varepsilon_0) $ of $ x_0 $ (note that $ d\mu(x) = w(x) |x - x_0|^\alpha $ there for a continuous and positive $ w $), \cite[Theorem 1.1]{Varga} says that if $ x_{kn}, x_{k+1,n}, \dots, x_{l,n} \in (x_0 - \varepsilon_0, x_0 + \varepsilon_0) $, then
\begin{equation}\label{zeros_doubling_measure_spacing}
	\frac{c}{n} \leq x_{m + 1,n} - x_{mn} \leq \frac{C}{n}, \quad m = k , k + 1, \dots, l - 1
\end{equation}
holds for some constants $ c $ and $ C $ independent of $ m $ and $ n $. Together with (\ref{zeros_doubling_measure_spacing}) and the above observation about the location of the zeros of $ f_n(a, \cdot) $, we have
\[
	x_{kn}(0) - x_{k-1,n}(0) \leq \frac{C}{n}
\]
and
\[
	x_{kn}(0) - x_{k-2,n}(0) \geq \frac{c}{n}
\]
for some possibly different constants, therefore, since we have $ \rho_{kn}(0) = n\pi \omega_K(x_0)(x_{kn}(0) - x_0) $, using Hurwitz's theorem once more gives (\ref{zero_spacing}).
\end{proof}
 
\begin{lemma}\label{normality_f_lemma}
Let $ f(a,b) = \lim_{k \to \infty} f_{n_k}(a,b) $ for some subsequence $ n_k $. \\
(a) $ f(a, \cdot) $ is entire of exponential type $ \sigma_a $ and
\begin{equation}\label{normality_f_L2_bound}
	\int_{-\infty}^{\infty} |f(a,t)|^2 |t|^\alpha dx \leq \frac{f(a, \overline{a})}{\mathbb{L}_{\alpha}^{*}(0)}
\end{equation}
holds. \\
(b) $ f(a, \cdot) $ belongs to the Cartwright class. \\
(c) The exponential type $ \sigma_a $ of the entire function $ f(a, \cdot) $ is independent of $ a $.
\end{lemma}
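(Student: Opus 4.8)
The plan is to adapt Lubinsky's argument from \cite[Lemmas 4.4--4.5]{Lubinsky_3}, replacing the sinc kernel by $\mathbb{L}_{\alpha}^{*}$, and using the reproducing identities from Lemma \ref{reproducing_kernel_lemma} together with the Christoffel function asymptotics of Theorem \ref{main_theorem_bulk}.

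\emph{Part (a).} First I would observe that the estimate (\ref{normality_f_n_estimate}) is inherited by the locally uniform limit, so
\[
	|f(a,z)| \le c_1 e^{c_2(|\operatorname{Im}(a)| + |\operatorname{Im}(z)|)}(1+|\operatorname{Re}(a)|)^{-\alpha/2}(1+|\operatorname{Re}(z)|)^{-\alpha/2},
\]
which shows at once that $f(a,\cdot)$ is entire of exponential type $\sigma_a \le c_2 < \infty$. For the $L^2$ bound I would use the reproducing identity $K_n(z,w) = \int K_n(z,x)K_n(x,w)\,d\mu(x)$ together with $\overline{K_n(z,x)} = K_n(\bar z,x)$ for real $x$ (the $p_k$ have real coefficients), which yields $\int |K_n(z,x)|^2\,d\mu(x) = K_n(z,\bar z) \geq 0$. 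Taking $z = x_0 + a^*/n$ and substituting $x = x_0 + t^*/n$, so that near $x_0$ one has $|t|^{\alpha}\,dt = (\pi\omega_K(x_0) n)^{\alpha+1}|x-x_0|^{\alpha}\,dx = (\pi\omega_K(x_0) n)^{\alpha+1}w(x)^{-1}\,d\mu(x)$, and restricting to $|t|\le T$ (so that for $n$ large the corresponding $x$ lies in the left-and-right neighbourhood of $x_0$ where $d\mu = w|x-x_0|^{\alpha}\,dx$ and $w(x)\ge (1+\varepsilon)^{-1}w(x_0)$), one obtains
\[
	\int_{|t|\le T} |f_n(a,t)|^2 |t|^{\alpha}\,dt \le (1+\varepsilon)\,\frac{(\pi\omega_K(x_0) n)^{\alpha+1}}{w(x_0)\,K_n(x_0,x_0)}\, f_n(a,\overline{a}),
\]
using $K_n(x_0 + a^*/n, x_0 + \overline{a^*}/n) = K_n(x_0,x_0) f_n(a,\overline{a})$. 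By Theorem \ref{main_theorem_bulk} applied at $a = 0$ the prefactor converges to $\mathbb{L}_{\alpha}^{*}(0)^{-1}$; letting $k\to\infty$ along the subsequence defining $f$ (the convergence $f_{n_k}\to f$ being uniform on $|t|\le T$), then $\varepsilon\to 0$ and finally $T\to\infty$ by monotone convergence, yields (\ref{normality_f_L2_bound}). In particular $|t|^{\alpha/2}f(a,t)\in L^2(\mathbb{R})$.

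\emph{Part (b).} Evaluating the pointwise bound of part (a) on the real axis gives $|f(a,x)| \le C_a(1+|x|)^{-\alpha/2}$ for $x\in\mathbb{R}$, hence $\log^{+}|f(a,x)| \le \log^{+} C_a + \tfrac{|\alpha|}{2}\log(1+|x|)$, which is integrable against $(1+x^2)^{-1}$. Since $f(a,\cdot)$ is of exponential type, this is precisely the Cartwright condition.

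\emph{Part (c).} Here I would invoke the density-of-zeros theorem recalled before Lemma \ref{zeros}: for an entire $g$ of exponential type $\sigma$ belonging to the Cartwright class one has $n(g,r)/(2r)\to\sigma/\pi$ as $r\to\infty$, see \cite[Theorem 17.2.1]{Levin}. By parts (a) and (b) this applies to both $f(a,\cdot)$ and $f(0,\cdot)$; since Lemma \ref{zeros}(a) gives $n(f(a,\cdot),r) - n(f(0,\cdot),r) = O(1)$, dividing by $2r$ and letting $r\to\infty$ forces $\sigma_a/\pi = \sigma_0/\pi$, so $\sigma_a$ is independent of $a$.

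\emph{Main obstacle.} The only substantial point is the $L^2$ estimate in (a): the change of variables and its effect on the weight, the localization of the integral to a shrinking neighbourhood of $x_0$ so that the reproducing-kernel identity $\int|K_n(z,\cdot)|^2\,d\mu = K_n(z,\bar z)$ can be applied, and the correct normalization through Theorem \ref{main_theorem_bulk}. Once (a) is in place, (b) is immediate and (c) is a one-line consequence of the zero-counting lemma.
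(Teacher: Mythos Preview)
Your proof is correct and parts (a) and (c) match the paper's argument essentially verbatim: the exponential-type claim via the inherited bound (\ref{normality_f_n_estimate}), the $L^2$ estimate via the reproducing identity $K_n(z,\bar z)=\int|K_n(z,x)|^2\,d\mu(x)$ truncated to $|t|\le T$ and normalized through Theorem~\ref{main_theorem_bulk}, and the use of the Cartwright zero-density theorem together with Lemma~\ref{zeros}(a) for part (c).

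Your treatment of (b), however, is genuinely different and in fact shorter than the paper's. You read off directly from (\ref{normality_f_n_estimate}) that $|f(a,x)|\le C_a(1+|x|)^{-\alpha/2}$ on the real axis, whence $\log^+|f(a,x)|\le \log^+ C_a+\tfrac{|\alpha|}{2}\log(1+|x|)$ is integrable against $(1+x^2)^{-1}$. The paper instead writes $\log^+|f(a,t)|\le\tfrac12\log^+\bigl(|f(a,t)|^2|t|^\alpha\bigr)+\tfrac{|\alpha|}{2}\log^+|t|$, then controls the first summand by a level-set decomposition $A_n=\{t:e^n\le|f(a,t)|^2|t|^\alpha<e^{n+1}\}$ combined with Markov's inequality and the $L^2$ bound (\ref{normality_f_L2_bound}). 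Both routes are valid; yours relies on (\ref{normality_f_n_estimate}) holding with constants independent of $\operatorname{Re}(b)$ (which follows from the Nevai-type bounds (\ref{Nevai_general_lower_bound_bulk})--(\ref{Nevai_general_upper_bound_bulk}) rather than from Theorem~\ref{main_theorem_bulk} alone, since the latter is uniform only on compacta), whereas the paper's route uses only the $L^2$ information from part (a) and would survive if merely $|t|^{\alpha/2}f(a,t)\in L^2(\mathbb{R})$ were known.
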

\begin{proof}
(a) It is clear that $ f(a,b) $ is entire in both variables, since it is a locally uniform limit of entire functions. Moreover, the bound (\ref{normality_f_n_estimate}) holds for $ f(a,b) $ as well, which implies that $ f(a, \cdot) $ is of exponential type. We shall denote its exponential type with $ \sigma_a $ for the time being. (In fact, we shall show later that the type is independent of $ a $ and it is $ 1 $.) As for the proof of (\ref{normality_f_L2_bound}), we proceed similarly as in \cite[Lemma 4.2 (b)]{Lubinsky_3}. For all $ z \in \mathbb{C} $, we have
\begin{align*}
	K_n(z, \overline{z}) = \int |K_n(z, x)|^2 d\mu(x) \geq \int_{x_0 - r/n}^{x_0 + r/n} |K_n(z,x)|^2 w(x) |x - x_0|^\alpha dx
\end{align*}
for large $ n $. After the substitution $ z = x_0 + a^*/n $, $ x = x_0 + t^*/n $, we have
\begin{align*}
	K_n& \Big( x_0 + \frac{a^*}{n}, x_0 + \frac{\overline{a^*}}{n} \Big) \\ & \geq \frac{1}{(\pi \omega_K(x_0))^{\alpha + 1}n^{\alpha + 1}}\int_{-r}^{r} \Big| K_n\Big( x_0 + \frac{a^*}{n}, x_0 + \frac{t^*}{n} \Big) \Big|^2 w(x_0 + t^*/n) |t|^\alpha dt
\end{align*}
which gives
\begin{align*}
	(\pi \omega_K(x_0))^{\alpha + 1}\geq \int_{-r}^{r} \frac{|f_n(a,t)|^2}{f_n(a,\overline{a})} \frac{K_n(x_0, x_0)}{n^{\alpha + 1}} w(x_0 + t/n) |t|^\alpha dt.
\end{align*}
By letting $ n \to \infty $ through the subsequence $ n_k $, (\ref{general_bulk}) gives
\[
	(\pi \omega_K(x_0))^{\alpha + 1} \geq \int_{-r}^{r} \frac{|f(a, t)|^2}{f(a, \overline{a})} (\pi \omega_K(x_0))^{\alpha + 1} \mathbb{L}_{\alpha}^{*}(0) |t|^{\alpha} dt,
\]
from which (\ref{normality_f_L2_bound}) follows. \\

(b) To prove that $ f(a, \cdot) $ belongs to the Cartwright class, we have to show that
\[
	\int_{-\infty}^{\infty} \frac{\log^+ |f(a,t)|}{1 + t^2} dt < \infty.
\]
Since $ f(a, \cdot) $ is entire, it is clear that $ \int_{-1}^{1} \frac{\log^+ |f(a,t)|}{1 + t^2} dt < \infty $. Next, using the well known facts $ \log^+ a b \leq \log^+ a + \log^+ b $ and $ \log^+ a^b = b \log^+ a $, we have
\[
	\int_{1}^{\infty} \frac{\log^+ |f(a,t)|}{1 + t^2} dt \leq C \bigg( \int_{1}^{\infty} \frac{\log^+ |f(a,t)|^2 |t|^{\alpha}}{1 + t^2} dt + \int_{1}^{\infty} \frac{\log^+ |t|}{1 + t^2} dt \bigg).
\]
The second integral is finite. For the first one, define
\[
	A_n = \{ t \in \mathbb{R}: e^{n} \leq |f(a,t)|^2 |t|^{\alpha} < e^{n + 1} \}.
\]
The bound (\ref{normality_f_L2_bound}) and Markov's inequality about the measure of level sets of $ L^1 $ functions gives that
\begin{align*}
	\int_{1}^{\infty} \frac{\log^+ |f(a,t)|^2 |t|^{\alpha}}{1 + t^2} dt & = \sum_{n=1}^{\infty} \int_{A_n} \frac{\log^+ |f(a,t)|^2 |t|^{\alpha}}{1 +t^2} dt \\
	& \leq \sum_{n=1}^{\infty} (n + 1)|A_n| \\
	& \leq C \sum_{n=1}^{\infty} (n + 1)  e^{-n} < \infty.
\end{align*}
The estimation of the integral $ \int_{-\infty}^{-1} \frac{\log^+ |f(a,t)|^2 |t|^{\alpha}}{1 + |t|^2} dt $ can be done in the same way, which shows that $ f(a, \cdot) $ belongs to the Cartwright class. \\

(c) This proof is identical to the one in \cite[Lemma 4.3]{Lubinsky_3}. Because $ f(a, \cdot) $ belongs to the Cartwright class, we have
\[
	\frac{\sigma_a}{\pi} = \lim_{r \to \infty} \frac{n(f(a, \cdot), r)}{2r},
\]
which, combined with (\ref{zeros_in_a_ball}), yields that $ \sigma_a $ is independent of $ a $.
\end{proof} 

From now on, since Lemma \ref{normality_f_lemma} (c) gives that $ \sigma_a $ is independent of $ a $, we shall denote the exponential type of $ f(a, \cdot) $ with $ \sigma $.

\begin{lemma}\label{lemma_f-L_inequality}
For all $ a \in \mathbb{R} $, we have
\begin{equation}\label{f-L_inequality}
\begin{aligned}
	\int_{-\infty}^{\infty} & \bigg( \frac{f(a/\sigma, t/\sigma)}{f(a/\sigma, a/\sigma)} - \frac{\mathbb{L}_{\alpha}^{*}(a, t)}{\mathbb{L}_{\alpha}^{*}(a, a)} \bigg)^2 |t|^\alpha dt \\
	& \phantom{aaaaaaa} \leq \frac{\sigma^{\alpha + 1}}{f(a/\sigma, a/\sigma)\mathbb{L}_{\alpha}^{*}(0,0)} - \frac{1}{\mathbb{L}_{\alpha}^{*}(a, a)}.
\end{aligned}
\end{equation}
Moreover,
\begin{equation}\label{exp_type_lower_estimate}
	\sigma \geq 1.
\end{equation}
\end{lemma}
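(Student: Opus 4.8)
The plan is to imitate Lubinsky's argument from \cite[Lemma 4.4]{Lubinsky_3}: feed a suitably rescaled copy of $f(a,\cdot)$ into the reproducing kernel identity of Theorem \ref{reproducing_kernel_identity} and expand a square.

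First I would collect the ingredients supplied by the preceding lemmas. By Lemma \ref{normality_f_lemma}\,(a),(c), $f(a,\cdot)$ is entire of exponential type $\sigma$ for every $a$, and by Lemma \ref{normality_f_lemma}\,(b) it lies in the Cartwright class; combined with the bounded gaps of the zeros of $f(0,\cdot)$ from Lemma \ref{zeros}\,(b), the density formula $\sigma/\pi = \lim_{r\to\infty} n(f(0,\cdot),r)/(2r)$ forces $\sigma > 0$. Hence for real $a$ the function $g(w) := f(a/\sigma, w/\sigma)$ is well defined and entire of exponential type $1$, and by (\ref{normality_f_L2_bound}) together with the substitution $t = \sigma s$,
\[
	\int_{-\infty}^{\infty} g(t)^2 |t|^\alpha\,dt = \sigma^{\alpha+1}\int_{-\infty}^{\infty} f(a/\sigma, s)^2 |s|^\alpha\,ds \leq \frac{\sigma^{\alpha+1} f(a/\sigma, a/\sigma)}{\mathbb{L}_\alpha^*(0,0)} < \infty,
\]
so $|t|^{\alpha/2}g(t) \in L^2(\mathbb{R})$ and Theorem \ref{reproducing_kernel_identity} applies to $g$; in particular, evaluating (\ref{reproducing_identity_equation}) at $z = a$ gives $\int_{-\infty}^{\infty} g(s)\mathbb{L}_\alpha^*(a,s)|s|^\alpha\,ds = g(a)$.

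Next I would expand the left-hand side of (\ref{f-L_inequality}). Abbreviating $h(t) = \mathbb{L}_\alpha^*(a,t)$ and using Lemma \ref{reproducing_kernel_lemma}\,(a) at $b=a$ together with the symmetry of $\mathbb{L}_\alpha^*$ in the form $\int_{-\infty}^{\infty} h(t)^2 |t|^\alpha\,dt = \mathbb{L}_\alpha^*(a,a) = h(a)$, the cross-term identity $\int_{-\infty}^{\infty} g(t)h(t)|t|^\alpha\,dt = g(a)$ from the previous step, and the $L^2$-bound above, one gets
\begin{align*}
	\int_{-\infty}^{\infty}\Big( \frac{g(t)}{g(a)} - \frac{h(t)}{h(a)} \Big)^2 |t|^\alpha\,dt
	&= \frac{1}{g(a)^2}\int g^2 |t|^\alpha\,dt - \frac{2}{g(a)h(a)}\int g h\, |t|^\alpha\,dt + \frac{1}{h(a)^2}\int h^2 |t|^\alpha\,dt \\
	&\leq \frac{\sigma^{\alpha+1}}{f(a/\sigma, a/\sigma)\,\mathbb{L}_\alpha^*(0,0)} - \frac{2}{\mathbb{L}_\alpha^*(a,a)} + \frac{1}{\mathbb{L}_\alpha^*(a,a)},
\end{align*}
which is exactly (\ref{f-L_inequality}) once we note $g(t)/g(a) = f(a/\sigma, t/\sigma)/f(a/\sigma, a/\sigma)$ and $h(t)/h(a) = \mathbb{L}_\alpha^*(a,t)/\mathbb{L}_\alpha^*(a,a)$. (The denominators are nonzero: Theorem \ref{main_theorem_bulk} gives $f(a,a) = \mathbb{L}_\alpha^*(a)/\mathbb{L}_\alpha^*(0)$, and $\mathbb{L}_\alpha^*$ is bounded away from $0$ on bounded sets by the Nevai estimate (\ref{Nevai_bound_bulk}).) Finally, the integral on the left of (\ref{f-L_inequality}) is $\geq 0$, so $\sigma^{\alpha+1}\mathbb{L}_\alpha^*(a,a) \geq f(a/\sigma, a/\sigma)\,\mathbb{L}_\alpha^*(0,0)$; specializing to $a = 0$ and using $f(0,0) = 1$ yields $\sigma^{\alpha+1} \geq 1$, hence $\sigma \geq 1$ because $\alpha+1 > 0$. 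The only genuinely delicate point is the preparatory bookkeeping — checking $\sigma > 0$ and that $g$ has exponential type $1$ and lies in the weighted $L^2$ space so that Theorem \ref{reproducing_kernel_identity} is legitimately applicable; the manipulation of the three integrals afterwards is routine.
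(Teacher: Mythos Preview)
Your proof is correct and follows essentially the same route as the paper's: rescale $f(a,\cdot)$ to exponential type $1$, apply the reproducing identity of Theorem~\ref{reproducing_kernel_identity} for the cross term and Lemma~\ref{reproducing_kernel_lemma}\,(a) for the $\mathbb{L}_\alpha^*$ term, then read off (\ref{f-L_inequality}) from the expanded square and deduce $\sigma\geq 1$ from nonnegativity at $a=0$. You in fact supply a bit more than the paper does, namely the justification that $\sigma>0$ (via the zero density and Lemma~\ref{zeros}\,(b)) and the observation that $\sigma^{\alpha+1}\geq 1$ forces $\sigma\geq 1$ because $\alpha+1>0$.
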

\begin{proof}
(\ref{normality_f_L2_bound}) implies that $ |x|^{\alpha/2}f(a/\sigma, t/\sigma) \in \mathbb{L}^2(\mathbb{R}) $, therefore after expanding the left hand side of (\ref{f-L_inequality}), (\ref{reproducing_identity_equation}) and (\ref{reproducing_identity_L}) gives (\ref{f-L_inequality}). Using that the left hand side of (\ref{f-L_inequality}) is nonnegative, substituting $ a = 0 $ and keeping in mind that $ f(0,0) = 1 $ gives $ \sigma \geq 1 $.
\end{proof}

The inequality (\ref{f-L_inequality}) and (\ref{general_bulk}) imply that if $ \sigma = 1 $, then $ f(a, t) = \frac{\mathbb{L}_{\alpha}^{*}(a, t)}{\mathbb{L}_{\alpha}^{*}(0,0)} $ for all $ a, t \in \mathbb{R} $, which, since $ f(a, b) $ is entire in both variables, would imply Theorem \ref{main_theorem_universality_bulk}.

\begin{lemma}\label{MS_limit}
Let $ k > l $ be given integers. Then
\begin{equation}\label{MS_limit_final_inequality}
	\sum_{j=l+1}^{k-1} \frac{1}{f(\rho_j, \rho_j)} \leq \mathbb{L}_\alpha^*(0,0) \frac{\rho_{k}^{\alpha + 1} - \rho_{l}^{\alpha + 1}}{\alpha + 1} \leq \sum_{j=l}^{k} \frac{1}{f(\rho_j, \rho_j)}.
\end{equation}
\end{lemma}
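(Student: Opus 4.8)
The proof of Lemma~\ref{MS_limit} will follow the Markov--Stieltjes type argument of Lubinsky \cite[Lemma 4.4 or its analogue]{Lubinsky_3}, transported from the sine kernel to the kernel $\mathbb{L}_\alpha^*$. The starting point is the observation that the reproducing property of $\mathbb{L}_\alpha^*$ in $L^2(|t|^\alpha\,dt)$, established in Theorem~\ref{reproducing_kernel_identity} and Lemma~\ref{reproducing_kernel_lemma}, makes $\mathbb{L}_\alpha^*(a,b)$ behave like a Christoffel--Darboux kernel for the ``measure'' $|t|^\alpha\,dt$ restricted to Paley--Wiener type spaces. Correspondingly, $f(a,b)$ inherits a reproducing-kernel structure: from (\ref{normality_f_L2_bound}) the functions $f(a,\cdot)$ sit in $L^2(|t|^\alpha\,dt)$, and passing to the limit in the Christoffel--Darboux reproducing identity $K_n(z,w)=\int K_n(z,x)K_n(x,w)\,d\mu(x)$ (as was done in Lemma~\ref{reproducing_kernel_lemma}(a) for $\mathbb{L}_\alpha^*$) gives
\[
	f(a,b) = \mathbb{L}_\alpha^*(0,0)\int_{-\infty}^{\infty} f(a,t)f(t,b)\,|t|^\alpha\,dt .
\]
This identity, together with the fact that the zeros $\rho_j$ of $f(0,\cdot)$ are real (Lemma~\ref{zeros}(a)) and the normalization $f(0,0)=1$, is what allows a Gauss-type quadrature argument.

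\textbf{Main steps.} First I would set up the quadrature. Since $f(0,\cdot)$ is entire of exponential type $\sigma$ in the Cartwright class with real zeros $\{\rho_j\}$, one forms the ``fundamental functions''
\[
	\ell_j(t) = \frac{f(0,t)}{f_t(0,\rho_j)\,(t-\rho_j)},
\]
which reproduce values at the nodes $\rho_j$. Exactly as in the classical Markov--Stieltjes inequality for orthogonal polynomials, one shows that for any finite index set the partial sums
\[
	\sum_j \frac{\chi_{(\rho_l,\rho_k)}(\rho_j)}{f(\rho_j,\rho_j)}
\]
sandwich the integral $\mathbb{L}_\alpha^*(0,0)\int_{\rho_l}^{\rho_k}|t|^\alpha\,dt = \mathbb{L}_\alpha^*(0,0)\,\frac{\rho_k^{\alpha+1}-\rho_l^{\alpha+1}}{\alpha+1}$, the direction of the inequalities being controlled by whether the endpoint nodes are included. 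The weight $1/f(\rho_j,\rho_j)$ is the analogue of the Christoffel number: one identifies $1/f(\rho_j,\rho_j)=\mathbb{L}_\alpha^*(0,0)\int_{-\infty}^{\infty}\ell_j(t)^2|t|^\alpha\,dt$ using the reproducing identity above, which makes the weights positive and makes the quadrature exact on the relevant space of entire functions. Then I would approximate $\chi_{(\rho_l,\rho_k)}(t)|t|^\alpha$ from above and below by functions of the form $g(t)^2|t|^\alpha$ with $g$ entire of exponential type $\le\sigma$ and $|t|^{\alpha/2}g\in L^2$, for which the quadrature is exact; letting the approximation tighten yields (\ref{MS_limit_final_inequality}). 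The role of Lemma~\ref{zeros}(b), i.e.\ the zero-spacing bounds $\rho_k-\rho_{k-1}\le c_1$ and $\rho_k-\rho_{k-2}\ge c_2$, is to guarantee that the nodes are separated and not too sparse, so that these one-sided majorant/minorant entire functions actually exist with the correct exponential type.

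\textbf{The main obstacle.} The delicate point, compared with the sine-kernel case, is the singular factor $|t|^\alpha$ and the fact that $f(0,\cdot)$ may have a double zero at some $\rho_j$ (Lemma~\ref{zeros} only guarantees zeros are at most double). Near $t=0$ the weight $|t|^\alpha$ is either unbounded (if $-1<\alpha<0$) or vanishes (if $\alpha>0$), so one must check that the fundamental functions $\ell_j$ still lie in $L^2(|t|^\alpha\,dt)$ and that the interchange of sum and integral, and the limiting procedure in the one-sided approximations, remain valid; here the global bound (\ref{normality_f_n_estimate})–(\ref{normality_f_L2_bound}) together with the large-argument asymptotics $\mathbb{L}_\alpha^*(a)=\frac{1}{\pi|a|^\alpha}(1+o(1))$ from (\ref{L_alpha_large_argument}) do the job. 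The possible double zeros force one to work with a Hermite-type rather than a Lagrange-type interpolation at those nodes, but since a double zero contributes the same term $1/f(\rho_j,\rho_j)$ to both sides, the inequality structure is unaffected; this is handled exactly as in \cite{Lubinsky_3} and I would import that argument with only notational changes, replacing Lebesgue measure by $|t|^\alpha\,dt$ and the Paley--Wiener space by its weighted analogue controlled by Theorem~\ref{reproducing_kernel_identity}.
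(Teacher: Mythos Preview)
Your approach has a genuine gap. You assert the reproducing identity
\[
	f(a,b) = \mathbb{L}_\alpha^*(0,0)\int_{-\infty}^{\infty} f(a,t)f(t,b)\,|t|^\alpha\,dt
\]
by ``passing to the limit'' in $K_n(z,w)=\int K_n(z,x)K_n(x,w)\,d\mu(x)$, but at this stage in the argument only the \emph{inequality} (\ref{normality_f_L2_bound}) is available. The proof of (\ref{reproducing_identity_L}) for $\mathbb{L}_\alpha^*$ relied on tail control via Pollard's decomposition for the specific model weight $|x|^\alpha$ on $[-1,1]$; for the general measure $\mu$ of Theorem~\ref{main_theorem_bulk} no such estimate is proved, and the truncated integral over $[-r,r]$ need not recover all of $f(a,b)$ in the limit $r\to\infty$. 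In fact, equality in the reproducing identity for $f$ is essentially equivalent to $\sigma=1$: if $\sigma>1$ then $f(a,\cdot)$ lives in a strictly larger space than the one for which $\mathbb{L}_\alpha^*$ is the reproducing kernel, and the identity fails. Since Lemma~\ref{MS_limit} is precisely the tool used afterwards to prove $\sigma\le 1$, you would be assuming what you want to show. The remainder of your plan (Gauss quadrature for $f$, one-sided Beurling--Selberg type majorants in the weighted space) rests on this identity, so the whole argument does not get off the ground.

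The paper avoids this circularity entirely by working at finite $n$ and then taking a limit. The classical Markov--Stieltjes inequality for orthogonal polynomials (\cite[p.~33 (5.10)]{Freud}) gives, with $t_{jn}(x_0)$ the zeros of $\psi_n(x_0,\cdot)$,
\[
	\sum_{j=l+1}^{k-1} \frac{1}{K_n(t_{jn}(x_0), t_{jn}(x_0))} \;\le\; \int_{t_{ln}(x_0)}^{t_{kn}(x_0)} d\mu(x) \;\le\; \sum_{j=l}^{k} \frac{1}{K_n(t_{jn}(x_0), t_{jn}(x_0))},
\]
which is an exact algebraic fact requiring nothing about $f$ or $\sigma$. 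One then substitutes $x=x_0+s^*/n$, multiplies through by $K_n(x_0,x_0)$, uses $\rho_{jn}(0)\to\rho_j$ from Lemma~\ref{zeros}(b) and the Christoffel-function asymptotics of Theorem~\ref{main_theorem_bulk}, and lets $n\to\infty$ through the subsequence $n_k$. The middle term becomes $\mathbb{L}_\alpha^*(0,0)\int_{\rho_l}^{\rho_k}|s|^\alpha\,ds$, the outer sums become $\sum 1/f(\rho_j,\rho_j)$, and (\ref{MS_limit_final_inequality}) follows in a few lines with no need for any reproducing identity for $f$, no entire-function quadrature, and no one-sided approximation.
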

\begin{proof}
The Markov-Stiejtles inequalities along with(\ref{Christoffel_polynomial_form}) imply, as in \cite[p. 33 (5.10)]{Freud}, that
\begin{equation}\label{MS_limit_inequality}
	\sum_{j=l+1}^{k-1} \frac{1}{K_n(t_{jn}(x_0), t_{jn}(x_0))} \leq \int_{t_{ln}(x_0)}^{t_{kn}(x_0)} d\mu(x) \leq \sum_{j=l}^{k} \frac{1}{K_n(t_{jn}(x_0), t_{jn}(x_0))},
\end{equation}
where $ t_{jn}(x_0) $ denotes the zeros of $ \psi_n(x_0, \cdot) = p_n(x_0) p_{n-1}(\cdot) - p_n(\cdot)p_{n-1}(x_0) $ centered around $ x_0 $ such that $ t_{0n}(x_0) = x_0 $. Now suppose that $ t_{ln}(x_0) $ and $ t_{kn}(x_0) $ belongs to $ (x_0 - \varepsilon_0, x_0 + \varepsilon_0) $, where $ \varepsilon_0 $ is so small such that $ \mu $ is absolutely continuous in this interval. Then, substituting $ x = x_0 + s^*/n $ (recall that $ s^* = s/(\pi \omega_K(x_0)) $ by definition) and using (\ref{zeros_of_K_n_as_zeros_of_psi_n}) with (\ref{zeros_of_f_n_as_zeros_of_K_n}), the integral in the middle takes the form
\begin{align*}
	\int_{t_{ln}(x_0)}^{t_{kn}(x_0)} d\mu(x) & = \int_{t_{ln}(x_0)}^{t_{kn}(x_0)} w(x) |x - x_0|^\alpha dx \\
	& = \frac{1}{n^{\alpha + 1}}\int_{\rho_{ln}(0)}^{\rho_{kn}(0)} \frac{w(x_0 + s^*/n)}{(\pi \omega_K(x_0))^{\alpha + 1}} |s|^\alpha ds.
\end{align*}
On the other hand, by definition $ \frac{K_n(t_{jn}(x_0), t_{jn}(x_0))}{K_n(x_0, x_0)} = f_n(\rho_{jn}(0), \rho_{jn}(0)) $. Multiplying with $ K_n(x_0, x_0) $ in (\ref{MS_limit_inequality}) we obtain
\begin{align*}
	\sum_{j=l+1}^{k-1} \frac{1}{f_n(\rho_{jn}(0), \rho_{jn}(0))} & \leq \frac{K_n(x_0, x_0)}{n^{\alpha + 1}} \int_{\rho_{ln}(0)}^{\rho_{kn}(0)} \frac{w(x_0 + s/n)}{(\pi \omega_K(x_0))^{\alpha + 1}} |s|^\alpha ds \\
	& \leq \sum_{j=l}^{k} \frac{1}{f_n(\rho_{jn}(0), \rho_{jn}(0))},
\end{align*}
which, after letting $ n $ to infinity and using (\ref{general_bulk}) with (\ref{zeros_converge}), yields (\ref{MS_limit_final_inequality}).
\end{proof}

The next lemma is an analogue of \cite[Lemma 5.3]{Lubinsky_3}, for which the proof also goes in an identical way.
\begin{lemma} Let $ \delta > 0 $ be arbitrary. \\
(a) There exists a positive integer $ L_+ $ such that if $ k > l > L_+ $ are selected in a way that
\begin{equation}\label{zero_spacing_ratio}
	\rho_k \leq (1 + \delta) \rho_l
\end{equation}
holds, we have
\begin{equation}\label{rho_spacing_1}
	k - l - 1 \leq (1 + \delta)^{|\alpha| + 1} \frac{\rho_k - \rho_l}{\pi}.
\end{equation}
Similarly, there exists a negative integer $ L_- $ such that if $ L_- > l > k $ are selected in a way that
\[
	|\rho_k| \leq (1 + \delta) |\rho_l|
\]
holds, 
\[
	l - k - 1 \leq (1 + \delta)^{|\alpha| + 1} \frac{|\rho_k| - |\rho_l|}{\pi}
\]
follows. \\
(b) For the function $ f(a, \cdot) $,
\[
	\limsup_{r \to \infty} \frac{n(f(a, \cdot), r)}{2r} \leq \frac{1}{\pi}
\]
holds. In particular, we have
\begin{equation}\label{exp_type_upper_estimate}
	\sigma \leq 1
\end{equation}
\end{lemma}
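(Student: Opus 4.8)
The plan is to follow the argument of \cite[Lemma 5.3]{Lubinsky_3}, feeding the Markov--Stieltjes bound \eqref{MS_limit_final_inequality} and the zero-spacing estimates \eqref{zero_spacing} into an elementary counting argument. The one fact to record first is that the diagonal of $f$ is already completely determined: with $a^*=a/(\pi\omega_K(x_0))$ as in \eqref{star_notation}, formula \eqref{Christoffel_polynomial_form} gives $f_n(a,a)=\lambda_n(\mu,x_0)/\lambda_n(\mu,x_0+a^*/n)$, so applying Theorem~\ref{main_theorem_bulk} once with parameter $a^*$ and once with parameter $0$ (note $\pi\omega_K(x_0)a^*=a$) shows $f(a,a)=\lim_k f_{n_k}(a,a)=\mathbb{L}_\alpha^*(a,a)/\mathbb{L}_\alpha^*(0,0)$ for every $a\in\mathbb{R}$. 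Hence $1/f(\rho_j,\rho_j)=\mathbb{L}_\alpha^*(0,0)/\mathbb{L}_\alpha^*(\rho_j,\rho_j)$, and by \eqref{L_alpha_large_argument} one has $1/\mathbb{L}_\alpha^*(\rho_j,\rho_j)=\pi|\rho_j|^\alpha(1+o(1))$ as $j\to\pm\infty$; moreover \eqref{zero_spacing} guarantees that no $\rho_j$ vanishes and that $|\rho_j|\to\infty$. Fix $\delta>0$ and choose $L_+$ (and symmetrically $L_-$) so large that for $j\ge L_+$ both $1/\mathbb{L}_\alpha^*(\rho_j,\rho_j)\ge\pi\rho_j^\alpha/(1+\delta)$ and $\rho_{j+1}\le\rho_j+c_1\le(1+\delta)\rho_j$ hold.

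For part (a), let $k>l>L_+$ satisfy $\rho_k\le(1+\delta)\rho_l$. Dividing the left inequality of \eqref{MS_limit_final_inequality} by $\mathbb{L}_\alpha^*(0,0)$, using $1/f(\rho_j,\rho_j)=\mathbb{L}_\alpha^*(0,0)/\mathbb{L}_\alpha^*(\rho_j,\rho_j)$ and the lower bound just fixed, we get
\[
	\frac{\pi}{1+\delta}(k-l-1)\min_{l<j<k}\rho_j^{\alpha}\le\frac{\pi}{1+\delta}\sum_{j=l+1}^{k-1}\rho_j^{\alpha}\le\int_{\rho_l}^{\rho_k}t^{\alpha}\,dt\le(\rho_k-\rho_l)\max_{[\rho_l,\rho_k]}t^{\alpha}.
\]
Since every $\rho_j$ with $l<j<k$ and the maximizing point lie in $[\rho_l,\rho_k]\subseteq[\rho_l,(1+\delta)\rho_l]$, checking the cases $\alpha\ge0$ and $\alpha<0$ separately gives $\max_{[\rho_l,\rho_k]}t^{\alpha}\le(1+\delta)^{|\alpha|}\min_{l<j<k}\rho_j^{\alpha}$; cancelling the positive factor $\min_{l<j<k}\rho_j^{\alpha}$ yields $k-l-1\le(1+\delta)^{|\alpha|+1}(\rho_k-\rho_l)/\pi$, which is \eqref{rho_spacing_1}. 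The statement for negative indices is identical with $|\rho_k|,|\rho_l|$ in place of $\rho_k,\rho_l$.

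For part (b), by \eqref{zeros_in_a_ball} it suffices to estimate $n(f(0,\cdot),r)$, and by symmetry the number $k_+(r)$ of positive zeros of $f(0,\cdot)$ in $[0,r]$. Set $l_0=L_++1$, and let $l_{m+1}$ be the largest index with $\rho_{l_{m+1}}\le(1+\delta)\rho_{l_m}$; the choice of $L_+$ forces $l_{m+1}\ge l_m+1$, and from $\rho_{l_{m+1}+1}>(1+\delta)\rho_{l_m}$ together with \eqref{zero_spacing} one obtains $\rho_{l_{m+1}}\ge(1+\delta/2)\rho_{l_m}$, so only $M=O(\log r)$ blocks are needed before $\rho_{l_M}>r$. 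Telescoping \eqref{rho_spacing_1} along $l_0<l_1<\cdots<l_M$ and using $\rho_{l_M}\le(1+\delta)\rho_{l_{M-1}}\le(1+\delta)r$,
\[
	k_+(r)\le l_M\le(L_++1)+M+\frac{(1+\delta)^{|\alpha|+1}}{\pi}\bigl(\rho_{l_M}-\rho_{l_0}\bigr)\le\frac{(1+\delta)^{|\alpha|+2}}{\pi}r+O(\log r).
\]
The same bound holds for the negative zeros, so $\limsup_{r\to\infty}n(f(0,\cdot),r)/(2r)\le(1+\delta)^{|\alpha|+2}/\pi$; letting $\delta\to0$ and using \eqref{zeros_in_a_ball} once more gives $\limsup_{r\to\infty}n(f(a,\cdot),r)/(2r)\le1/\pi$. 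Finally, $f(a,\cdot)$ has exponential type $\sigma$ and belongs to the Cartwright class by Lemma~\ref{normality_f_lemma}(b), so $\sigma/\pi=\lim_{r\to\infty}n(f(a,\cdot),r)/(2r)$ by \cite[Theorem 17.2.1]{Levin}, whence $\sigma\le1$, i.e. \eqref{exp_type_upper_estimate}. The only genuinely delicate point is the bookkeeping of the geometric chain in (b) --- making sure the $O(\log r)$ block count and the truncated last block stay negligible against the main term $r/\pi$ --- but, exactly as in \cite{Lubinsky_3}, this is routine.
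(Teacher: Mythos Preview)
Your argument is correct and follows essentially the same route as the paper: for (a) you use the diagonal identity $f(a,a)=\mathbb{L}_\alpha^*(a,a)/\mathbb{L}_\alpha^*(0,0)$ together with \eqref{L_alpha_large_argument} and the Markov--Stieltjes inequality \eqref{MS_limit_final_inequality}, and then bound the ratio of $t^\alpha$ over $[\rho_l,\rho_k]$ by $(1+\delta)^{|\alpha|}$ via the hypothesis $\rho_k\le(1+\delta)\rho_l$ --- this is exactly the paper's computation, organized slightly differently (you write the middle term as $\int_{\rho_l}^{\rho_k}t^\alpha\,dt$, the paper keeps it as $(\rho_k^{\alpha+1}-\rho_l^{\alpha+1})/(\alpha+1)$ and invokes the mean value theorem). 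For (b) the paper simply cites \cite[Lemma 5.3(b)]{Lubinsky_3}, whereas you spell out the geometric block decomposition that Lubinsky uses there; your bookkeeping (in particular the step $\rho_{l_{m+1}}\ge(1+\delta/2)\rho_{l_m}$, which needs $\rho_{l_m}\ge 2c_1/\delta$ and hence a suitably large $L_+$) is sound.
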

\begin{proof}
(a) We only show the existence of $ L_+ $, the existence of $ L_- $ follows similarly. (Or by reflecting the measure $ \mu $ around $ x_0 $.) Since (\ref{general_bulk}) gives that $ f(a,a) = \mathbb{L}_{\alpha}^{*}(a,a)/\mathbb{L}_{\alpha}^{*}(0,0) $, the number $ k - l - 1 $ can be written as
\[
	k - l - 1 = \sum_{j=l + 1}^{k - 1} \frac{\mathbb{L}_{\alpha}^{*}(\rho_j, \rho_j)}{\mathbb{L}_{\alpha}^{*}(0,0) f(\rho_j, \rho_j)}.
\]
If $ L $ is large enough, then (\ref{L_alpha_large_argument}) implies that for all $ j \geq L $,
\[
	\mathbb{L}_{\alpha}^{*}(\rho_j, \rho_j) \leq \frac{1 + \delta}{\pi \rho_j^\alpha}
\]
holds. Combining these with (\ref{MS_limit_final_inequality}), we obtain
\begin{align*}
	k - l - 1 & = \sum_{j=l + 1}^{k - 1} \frac{\mathbb{L}_{\alpha}^{*}(\rho_j, \rho_j)}{\mathbb{L}_{\alpha}^{*}(0,0) f(\rho_j, \rho_j)} \\
	& \leq \frac{1 + \delta}{\pi \mathbb{L}_{\alpha}^{*}(0,0)} \frac{1}{\min \{ \rho_l^{\alpha}, \rho_k^{\alpha}\}} \sum_{j= l + 1}^{k - 1} \frac{1}{f(\rho_j, \rho_j)} \\
	& \leq	\frac{1 + \delta}{\pi(1 + \alpha)}\frac{\rho_k^{\alpha + 1} - \rho_l^{\alpha + 1}}{\min \{ \rho_l^{\alpha}, \rho_k^{\alpha}\}} \\
	& \leq (1 + \delta)\frac{\rho_k - \rho_l}{\pi} \frac{\max \{ \rho_l^{\alpha}, \rho_k^{\alpha}\}}{\min \{ \rho_l^{\alpha}, \rho_k^{\alpha}\}},
\end{align*}
where the mean value theorem was used in the last step. (\ref{zero_spacing_ratio}) gives that
\[
	\frac{\max \{ \rho_l^{\alpha}, \rho_k^{\alpha}\}}{\min \{ \rho_l^{\alpha}, \rho_k^{\alpha}\}} \leq (1 + \delta)^{|\alpha|},
\]
therefore overall we have
\[
	k - l - 1 \leq (1 + \delta)^{|\alpha| + 1} \frac{\rho_k - \rho_l}{\pi},
\]
which gives (\ref{rho_spacing_1}).
The proof of (b) goes exactly as \cite[Lemma 5.3 (b)]{Lubinsky_3}.
\end{proof}
\textbf{Proof of Theorem \ref{main_theorem_universality_bulk}}. (\ref{exp_type_lower_estimate}) and (\ref{exp_type_upper_estimate}) gives that the exponential type of $ f(a, \cdot) $ is $ \sigma = 1 $. Substituting this back to the inequality (\ref{f-L_inequality}), we obtain that for all real $ b $, we have
\[
	f(a,b) = \frac{\mathbb{L}_{\alpha}^{*}(a,b)}{\mathbb{L}_{\alpha}^{*}(0,0)}, \quad a, b \in \mathbb{R}.
\]
Since $ f(a,b) $ is entire in both variables, it follows that the above equality holds for complex $ a, b $. Because the family $ \{ f_n(a,b) \}_{n=1}^{\infty} $ is normal and the above inequality is independent of the particular subsequence (recall that $ f(a,b) = \lim_{k \to \infty} f_{n_k}(a,b) $ for some subsequence $ n_k $), it follows that $ \lim_{n \to \infty} f_n(a,b) $ exists and it is $ f(a,b) $. Moreover, the convergence is uniform for $ a, b $ in compact subsets of the complex plane, as stated. \begin{flushright}$ \Box $ \end{flushright}

\end{document}